\documentclass[12pt]{amsart}

\usepackage{xy, graphicx, color, hyperref}
 
 \xyoption{all}

\usepackage{amsmath}
\usepackage{amssymb}
\usepackage{amsfonts, ytableau  ,hyperref}
 
\usepackage{tikz-cd} 

\usepackage{tikz,verbatim}

\makeatletter
\def\subsection{\@startsection{subsection}{3}%
  \z@{.9\linespacing\@plus.7\linespacing}{.1\linespacing}%
  {\normalfont\bfseries}}
\makeatother 
 
 \setcounter{tocdepth}{1}

\title[Determinants of Representations of Coxeter Groups]{Determinants of Representations of Coxeter Groups}
\author{Debarun Ghosh}
\author{Steven Spallone}

\newtheorem{theorem}{Theorem}
\newtheorem{corollary}{Corollary}
\newtheorem{lemma}{Lemma}

\newtheorem{proposition}{Proposition}
\newtheorem{defn}{Definition}

\newcommand{\C}{\mathbb C}

\newcommand{\nc}{\newcommand}

\nc{\prop}{\proposition}
\nc{\thm}{\theorem}
\nc{\cor}{\corollary}
\nc{\mc}{\mathcal}
\nc{\mb}{\mathbb}
\nc{\mf}{\mathfrak}
\nc{\ul}{\underline}
\nc{\ol}{\overline}
\nc{\N}{\mb N}
\nc{\R}{\mb R}
\nc{\Z}{\mb Z}
\nc{\Q}{\mb Q}

\nc{\dmo}{\DeclareMathOperator}

\nc{\domeq}{\overset{D}{=}}

\dmo{\Ker}{Ker} \dmo{\val}{val} \dmo{\ord}{ord}
\dmo{\Int}{Int}
 
 \dmo{\neat}{neat}
 \dmo{\messy}{messy}
 
\dmo{\I}{I}
\dmo{\II}{II}
\dmo{\odd}{odd}
\dmo{\sgn}{sgn}
\nc{\beq}{\begin{equation*}}
\nc{\eeq}{\end{equation*}}
\nc{\half}{\frac{1}{2}}
\dmo{\Mod}{mod}
\dmo{\ab}{ab}
\dmo{\ver}{ver}
\dmo{\core}{core}
\dmo{\quo}{quo}
\dmo{\bin}{bin}
\dmo{\Bip}{Bip}

\dmo{\res}{res}
\dmo{\lin}{lin}
 
\dmo{\Sp}{Sp}
\dmo{\SO}{SO}
\dmo{\Irr}{Irr}
 \dmo{\Spin}{Spin}
\dmo{\GSp}{GSp}
\nc{\la}{\lambda}
  
 \nc{\lip}{\langle}
 \nc{\rip}{\rangle}
\nc{\eps}{\varepsilon}

\dmo{\Perm}{Perm}
\dmo{\Res}{Res}
\dmo{\Ind}{Ind}
\dmo{\tr}{tr}
\dmo{\Sym}{Sym}
\dmo{\reg}{reg}
\dmo{\summ}{sum}
\dmo{\GL}{GL}

\begin{document}
\maketitle

\begin{abstract}
In \cite{APS-chiral}, the authors characterize the partitions of $n$ whose corresponding representations of $S_n$ have nontrivial determinant.
The present paper extends this work to all irreducible finite Coxeter groups $W$.  Namely, given a nontrivial multiplicative character $\omega$ of $W$, 
we give a closed formula for the number of irreducible representations of $W$ with determinant $\omega$.  For Coxeter groups of type $B_n$ and $D_n$, this is accomplished by characterizing the bipartitions associated to such representations.
 \end{abstract}

\tableofcontents 

\section{Introduction}
 
 The tools of this paper, and \cite{APS-chiral}, have their genesis in the paper \cite{macdonald} of Macdonald, who developed the arithmetic of partitions to give a closed formula for the number $A(n)$ of odd-dimensional Specht modules (irreducible representations) of $S_n$.  Briefly, if $n$ is expressed in binary as a sum of powers of $2$, then $A(n)$ is the product of those powers of $2$.  This formula comes from characterizing the $2$-core tower of ``odd" partitions; we review this notion in Section 5.1.  In \cite{APS-chiral}, the authors describe a simple way to read off the determinant of a representation of $S_n$ from its $2$-core tower, and in particular give a closed formula for the number $B(n)$ of Specht modules whose determinant is the sign character.
Since the number $p(n)$ of partitions of $n$ itself does not have a nice closed formula, it is remarkable that such formulas for $A(n)$ and $B(n)$ exist. 
 
Most of this paper is devoted to the Coxeter group of type $B_n$, which we write as `$\mb B_n$' for simplicity.  These are the ``hyperoctahedral groups".  Here $\mb B_n$ is the wreath product of $\Z/2 \Z$ by $S_n$.  There are four multiplicative characters of $\mb B_n$, which we denote by $\omega=1,\sgn^0, \eps$, and $\sgn^1$, described as follows.  The character $\sgn^0$ is the composition of the projection to $S_n$ with $\sgn$, the character $\eps$ is $(-1)^m$, where $m$ is the sum of the entries of the $(\Z/2\Z)^n$-factor, and finally $\sgn^1=\eps \cdot \sgn^0$.

Irreducible representations of $\mb B_n$, written  $\rho_{\alpha \beta}$, are parametrized by pairs $(\alpha,\beta)$ of partitions with $|\alpha|+|\beta|=n$, called bipartitions of $n$.   The representations are induced by certain ``Young Subgroups" $\mb B_a \times \mb B_b \leq \mb B_n$.  For each $\omega$, we characterize the $2$-core towers of $\alpha,\beta$ so that $ \det \rho_{\alpha \beta}=\omega$.  For $\omega \neq 1$, this leads to closed formulas for
\beq
N_\omega(n)= \# \{ (\alpha,\beta) : |\alpha|+|\beta|=n, \det \rho_{\alpha \beta}=\omega \}.
\eeq 

 Moreover we prove:
  
 \begin{theorem}
\label{inequality}
Let $n \geq 10$.  
\begin{enumerate}
\item If $n$ is odd, then $N_{\eps}(n)=N_{\sgn^1}(n)<N_{\sgn^0}(n)<N_{1}(n)$. 
\item
 If $n$ is even, then $N_{\eps}(n)=N_{\sgn^0}(n)<N_{\sgn^1}(n)<N_{1}(n)$.
\end{enumerate}
\end{theorem}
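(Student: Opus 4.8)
The plan is to read the theorem off the closed formulas for $N_\eps(n)$, $N_{\sgn^0}(n)$ and $N_{\sgn^1}(n)$ obtained in the preceding sections, together with the relation
\[
N_1(n)+N_{\sgn^0}(n)+N_\eps(n)+N_{\sgn^1}(n)=\sum_{k=0}^n p(k)\,p(n-k),
\]
which holds because every $\rho_{\alpha\beta}$ has exactly one of the four determinants. Thus $N_1(n)$ is the total number of bipartitions of $n$ minus the three explicit terms, and each comparison in the theorem becomes a comparison of explicit functions of the binary expansion of $n$.

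Before comparing, I would record the organizing principle behind the coincidences. Tensoring by a character is an involution of $\Irr(\mb B_n)$ preserving the dimension: one has $\rho_{\alpha\beta}\otimes\eps\cong\rho_{\beta\alpha}$, $\rho_{\alpha\beta}\otimes\sgn^0\cong\rho_{\alpha'\beta'}$ and $\rho_{\alpha\beta}\otimes\sgn^1\cong\rho_{\beta'\alpha'}$, while $\det(\rho\otimes\chi)=(\det\rho)\cdot\chi^{\dim\rho}$. Restricted to the \emph{odd}-dimensional representations these twists multiply $\det\rho$ by $\chi$ itself, so the character group $\{1,\sgn^0,\eps,\sgn^1\}$ acts regularly on the four determinant values and the odd-dimensional representations are split equally among them. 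Consequently each $N_\omega(n)$ equals this common odd-dimensional count plus the number $E_\omega(n)$ of \emph{even}-dimensional $\rho_{\alpha\beta}$ with $\det\rho_{\alpha\beta}=\omega$, and every equality and inequality in the theorem is equivalent to the corresponding statement for the $E_\omega(n)$.

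For the equalities I would compare the closed formulas for $E_\eps$ with $E_{\sgn^1}$ (for $n$ odd) and for $E_\eps$ with $E_{\sgn^0}$ (for $n$ even): the two parity conditions that cut out an even-dimensional determinant class—one coming from a transposition, one from a sign-change reflection—recombine, under the relevant parity of $n$, into identical conditions on the $2$-core towers of $\alpha$ and $\beta$, so the two formulas agree. For the strict inequalities I would isolate the dominant contribution, governed by the top binary digit of $n$, and show that the differences $E_{\sgn^0}-E_\eps$ and $E_1-E_{\sgn^0}$ (for $n$ odd), respectively $E_{\sgn^1}-E_\eps$ and $E_1-E_{\sgn^1}$ (for $n$ even), are sign-definite by bounding the remaining lower-order terms.

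The main obstacle is precisely this last step. The relevant differences are of lower order than the bulk counts $E_\omega(n)$, so proving that they are \emph{strictly} positive for every $n\ge 10$ requires a careful case analysis according to $n$ modulo a small power of $2$, together with uniform estimates on the error terms in Macdonald's $2$-core-tower count reviewed in Section~5.1. Small $n$ produce genuine coincidences and reversals among the $N_\omega(n)$, which is exactly what forces the hypothesis $n\ge 10$; I would therefore finish by tabulating the cases $1\le n\le 9$ directly and checking that the estimates close for all $n\ge 10$.
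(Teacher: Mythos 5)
Your overall skeleton is the same as the paper's: take the closed formulas for $N_\eps(n)$, $N_{\sgn^0}(n)$, $N_{\sgn^1}(n)$ as the input, get the equalities and the middle strict inequalities by comparing them, and reach $N_1(n)$ only through $p_2(n)-N_\eps(n)-N_{\sgn^0}(n)-N_{\sgn^1}(n)$, with a finite check for moderate $n$. Your twisting principle is a genuine and attractive addition: since $\det(\rho\otimes\chi)=(\det\rho)\cdot\chi^{\dim\rho}$ and twisting preserves dimension, the character group permutes the odd-dimensional irreducibles and the determinant map is equivariant on them, so each of the four classes contains exactly $\tfrac14 A(2n)$ odd-dimensional representations. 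This subsumes the paper's involution $\sigma(\alpha,\beta)=(\alpha',\beta')$ (Proposition \ref{invo.cong}) and recovers Corollary \ref{eps=sgn1} conceptually, because for $n$ odd every $\rho_{\alpha\beta}$ with $x_{\alpha\beta}$ odd is automatically odd-dimensional, so the classes of $\eps$ and $\sgn^1$ consist entirely of odd-dimensional representations.

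However, two of your steps would not close as written. First, for $n$ even the equality $N_\eps(n)=N_{\sgn^0}(n)$ does not come from the two parity conditions ``recombining into identical conditions'': the even-dimensional part of $\Bip_{\sgn^0}(n)$ is governed by the eight-case analysis of Proposition \ref{N_sgn0}, involving $B(a)$ and the domino counts $D(b)$, and bears no visible resemblance to $\Bip_\eps(n)$. The paper obtains Corollary \ref{eps=sgn^0} only \emph{a posteriori}, by computing both closed formulas (Theorems \ref{N_eps(n)} and \ref{N_sgn0(n)}) and observing that they coincide, and it states explicitly that no pure-thought proof is apparent; so either you mean ``match the two computed formulas'' (fine, and identical to the paper), or you are asserting a conceptual argument that nobody has. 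Second, and more seriously, your opening claim that ``each comparison in the theorem becomes a comparison of explicit functions of the binary expansion of $n$'' fails precisely for the comparisons against $N_1(n)$: the quantity $p_2(n)$ is not such a function, so $N_1(n)$ has no closed form, and the inequality $N_{\sgn^1}(n)<N_1(n)$ (resp.\ $N_{\sgn^0}(n)<N_1(n)$) cannot be proved by isolating a dominant binary digit and bounding lower-order terms. It is a race between the $\exp\bigl(c\sqrt{n}\bigr)$ growth of $p_2(n)\ge 2p(n)$ and the subexponential bound $A(2n)\le 2n^{\half(\log_2 n+3)}$ of Lemma \ref{A(n).bound}; the paper's argument needs an effective lower bound on the partition function (e.g.\ $p(n)\ge n^{j-1}/(j!(j-1)!)$ sharpened via Robbins' form of Stirling to $p(n)\ge 5nA(n)$ for $n\ge 64$) together with direct verification on the window $10\le n<64$. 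Two smaller points: the closed formulas are governed by $k=\ord(n')$, the \emph{lowest} binary digit of $n'$, not the top one; and tabulating $1\le n\le 9$ only shows the hypothesis $n\ge 10$ is necessary --- the finite computation your argument actually requires is on the range between $10$ and wherever your estimates take over.
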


In Section 2, after establishing notations, we give formulas for the determinant of a representation of a Coxeter group in terms of the character.  In Section 3, for completeness, we indicate the solution to the determinant problem for dihedral groups.  Section 4 reviews the work of \cite{macdonald} and   \cite{APS-chiral} for $S_n$, and develops it further for application to the hyperoctahedral case.  

Most of the work is in Section 5, where we first compute $\det \rho_{\alpha \beta}$ in terms of known quantities (`$f_\alpha$' the degree of the Specht module $\rho_\alpha$, and `$g_\alpha$' the multiplicity of $-1$ as an eigenvalue of $\rho_\alpha((12))$, and similarly for $\beta$).  We use a well-known formula for the determinant of an induced representation.  We then provide a table and a logplot illustrating the values of $N_\omega(n)$ for small $n$.
This is followed by a conceptual proof of the identity $N_{\sgn^1}(n)=N_\eps(n)$ when $n \geq 3$ is odd.  No such ``pure thought" proof is apparent for the other identity $N_{\sgn^0}(n)=N_\eps(n)$ when $n$ is even.  Next, we calculate each
\beq
N_\omega(a,b)=\{ (\alpha,\beta): |\alpha|=a,|\alpha|=b, \det \rho_{\alpha \beta}=\omega\}.
\eeq
In Section 5.6, we compute $N_\omega(n)=\sum_{a+b=n} N_\omega(a,b)$.  Considerable work goes into simplifying these sums to arrive at closed formulas.   These final formulas take the form
\beq
N_\omega(n)=A(2n) \times f_\omega(k),
\eeq
where $f_\omega(k)$ is a function only depending on $k=\ord_2(n)$, for $n$ even, and similarly in terms of $\ord_2(n-1)$ when $n$ is odd.

Next we apply Clifford Theory to the Coxeter group of type  $D_n$.  This group is the kernel of $\eps$ and we denote it by $\mb D_n$ (not to be confused with the dihedral groups, of type $I_2(n)$).  This group has two multiplicative characters, the nontrivial one being the restriction of $\sgn^0$, which we denote as `$\sgn$'.   We give a closed formula for $N'_{\sgn}(n)$, the number of irreducible representations of $\mb D_n$ with determinant $\sgn$, in terms of our earlier formulas for $N_\omega(n)$.

We similarly treat the remaining (exceptional) types of finite irreducible Coxeter groups.  Given the formulas of Section 2, this comes to a finite calculation with the character tables of \cite{Geck}.

{\bf Acknowledgements:} We would like to thank Dipendra Prasad and Amritanshu Prasad for their interest and useful conversations.
 This research was driven by computer exploration using the open-source
mathematical software \texttt{Sage}~\cite{sage} and its algebraic
combinatorics features developed by the \texttt{Sage-Combinat}
community~\cite{Sage-Combinat}. 
 
 \section{Notation and Preliminaries}

 \subsection{Binary Notation} \label{bin.notation}
 
 The nonnegative integer $n$ and its binary digits play a substantial role in this paper, so it is convenient to fix notation here.  Put
 \begin{equation} \label{n}
 n= \epsilon + 2^{k_1} + \cdots + 2^{k_r},
 \end{equation}
 with $\epsilon$ either $0$ or $1$, and $1 \leq k_1<k_2 < \cdots < k_r$.  We will sometimes write $k=k_1$, and $n'=n-\epsilon$.
 Let $\nu(n)$ denote the number of $1$'s in the binary expansion of $n$; in the above $\nu(n)=r+\epsilon$.
 
 Write $\bin(n)$ for the set of powers of $2$ in the binary expansion of $n$; in the above $\bin(n)=\{k_1, \ldots, k_r \}$, together with $0$ if $n$ is odd.
 
 Given a natural number $a$, write $\ord(a)$ for the highest power of $2$ dividing $a$.  In the above, $k=k_1=\ord(n')$.  (It is traditional to write `$\ord_2(n)$', but in this paper only `$2$' matters.)  Note for later that if $0 \leq j \leq k$, then
 \begin{equation} \label{nu.formula}
 \nu(n-2^j)=\nu(n)+k-j-1.
 \end{equation}

  \begin{defn}
Let $a,b$ be nonnegative integers. We say ``\thinspace  $\summ(a,b)$ is neat", provided that there is no carry when adding $a$ to $b$ in binary.  We also write `$a+b \doteq n$' to denote that $a+b=n$ and the sum is neat.  Similarly we say ``\thinspace $\summ(a,b)$ is messy", provided that there is a carry.
 \end{defn}

 Thus $\summ(a,b)$ is neat iff $\bin(a)$ and $\bin(b)$ are disjoint.  It is a classical observation that the binomial coefficient $\binom{n}{a,b}=\frac{n!}{a! b!}$ is odd iff $\summ(a,b)$ is neat; we will make extensive use of this   without further comment.  More generally, $\ord \binom{n}{a,b}$
 is equal to the number of carries when adding $a$ to $b$ in binary.

 \subsection{Partition Notation}

The notation `$\la \vdash n$' means that $\la$ is a partition of $n$.  We also write $|\la|=n$ in this case.  Write  $\la'$ for the conjugate partition; its Young diagram is the transpose of the Young diagram of $\la$.  A ``hook" is a partition of the form $\la=(a,1^b)$ for some nonnegative integers $a,b$.
 
A bipartition of a number $n$ is a pair of partitions $(\alpha,\beta)$ with $|\alpha|+|\beta|=n$. 
 The notation `$(\alpha,\beta) \models n$' means that $(\alpha,\beta)$ is a bipartition of $n$. 
 
 Write $p(n)$ for the number of partitions of $n$, and $p_2(n)$ for the number of bipartitions of $n$.
 Write $\Lambda$ for the set of partitions, $\Lambda_n$ for the set of partitions of $n$, and $\Bip(n)$ for the set of bipartitions of $n$.
 
 
 \subsection{Group Theory Notation}
 
 For a group $G$, write $D(G)$ for its derived (commutator) subgroup, and $G_{\ab}$ for its abelianization $G/D(G)$.

 There are four infinite families of finite irreducible Coxeter groups, namely ones of type $A_n$, $B_n$, $D_n$, and $I_2(n)$ (the dihedral groups).  
 The exceptional types are $E_6$, $E_7$, $E_8$, $F_4$, $H_3$, $H_4$.  (See \cite{Bou.Lie.4-6}.)
 
 All representations considered in this paper are finite-dimensional and complex.  
 Write $\Irr(G)$ for the set of isomorphism classes of irreducible representations of $G$.   If $\pi$ is a representation, we write $\chi_\pi$ for its character.

 By ``multiplicative character" we mean a group homomorphism $W \to \C^{\times}$, which since $W$ is generated by involutions, must take values in $ \{ \pm 1 \}$.  For a representation $(\pi,V)$   of a group $G$, write $\det \pi$ for the composition of $\pi: G \to \GL(V)$ with the determinant map.  Then $\det \pi$ is a multiplicative character of $G$.

 If $(\pi_1,V_1)$ and $(\pi_2,V_2)$ are representations of groups $G_1$ and $G_2$, write $\pi_1 \boxtimes \pi_2$ for the external tensor product representation of $G_1 \times G_2$ on $V_1 \otimes_{\C} V_2$.
 
 \subsection{Solomon Principle} \label{SolomonP}
 
 Let $\pi$ be a representation of a Coxeter group $W$.  In this section we show how to infer $\det \pi$ from its character.  So let $(W,S)$ be a Coxeter group ($S$ is a certain set of generators of order $2$; see \cite{Bou.Lie.4-6}).   There is a unique multiplicative character $\eps_W$ so that $\eps_W(s)=-1$ for each $s \in S$, namely
\beq
\eps_W(w)=(-1)^{\ell(w)},
\eeq
where $\ell(w)$ is the length of $w$ with respect to $S$.

For the Coxeter groups of type $A_n$, $D_n$, $E_6$, $E_7$, $E_8$, $H_3$, $H_4$, and $I_2(p)$ with $p$ odd, the trivial character and $\eps_W$ are the only multiplicative characters.  This is equivalent to the abelianization $W_{\ab}$ having order  $2$.

\begin{proposition} \label{solomon.principle} Suppose $|W_{\ab}|=2$ , and let $s \in S$.  If $\pi$ is a representation of $W$, then 
\beq
\det \pi=(\eps_W)^b,
\eeq
where
\beq
b=\frac{\dim \pi-\chi_\pi(s)}{2}.
\eeq
\end{proposition}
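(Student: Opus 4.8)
The plan is to exploit the hypothesis $|W_{\ab}| = 2$ in order to reduce the computation of $\det \pi$ to the determination of a single eigenvalue multiplicity. Since the determinant character $\det \pi \colon W \to \C^\times$ takes values in the abelian group $\C^\times$, it factors through the abelianization $W_{\ab}$. By hypothesis $|W_{\ab}| = 2$, so $W$ has exactly two multiplicative characters, namely the trivial character $1$ and $\eps_W$; hence $\det \pi$ must be one of these two.

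Next I would observe that these two candidate characters are separated by their values at $s$: indeed $1(s) = 1$ while $\eps_W(s) = -1$ by the defining property of $\eps_W$. Therefore $\det \pi$ is completely determined by the sign $\det \pi(s) = \det(\pi(s)) \in \{\pm 1\}$, and the whole problem reduces to computing this one number.

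To compute $\det(\pi(s))$, I use that $s$ is an involution, so $\pi(s)^2 = \pi(s^2) = \mathrm{id}$ and $\pi(s)$ is diagonalizable with eigenvalues in $\{\pm 1\}$. Writing $a$ and $b$ for the multiplicities of $+1$ and $-1$ respectively, we have $\dim \pi = a + b$ and $\chi_\pi(s) = \tr \pi(s) = a - b$, whence $b = (\dim \pi - \chi_\pi(s))/2$, exactly the quantity in the statement, and $\det(\pi(s)) = (+1)^a(-1)^b = (-1)^b$.

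Finally I would combine the two computations. Since $\eps_W(s) = -1$, we have $(\eps_W)^b(s) = (-1)^b = \det \pi(s)$, so $\det \pi$ and $(\eps_W)^b$ agree at $s$; as both belong to the two-element set $\{1, \eps_W\}$ and that set is separated by evaluation at $s$, they must coincide, giving $\det \pi = (\eps_W)^b$. There is no serious obstacle here: the only point requiring care is the initial reduction, namely recognizing that $|W_{\ab}| = 2$ forces $\det \pi \in \{1, \eps_W\}$ so that a single value at $s$ determines everything; the remainder is the elementary determinant-of-an-involution calculation.
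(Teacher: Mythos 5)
Your proof is correct and follows essentially the same approach as the paper's: the paper's (very terse) proof is exactly the eigenvalue-multiplicity computation $\dim \pi = a+b$, $\chi_\pi(s) = a-b$, with the reduction to evaluating $\det\pi$ at $s$ left implicit. You have merely spelled out the details the paper omits, namely that $\det\pi$ factors through $W_{\ab}$ and that the two characters $1,\eps_W$ are distinguished by their values at $s$.
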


\begin{proof} Let $a$ be the multiplicity of $1$ as an eigenvalue of $\pi(s)$, and $b$ be the multiplicity of $-1$.  Then $\dim \pi=a+b$ and $\chi_\pi(s)=a-b$.
\end{proof}
 
 We attribute this approach to L. Solomon; see \cite{ec2}, Exercise 7.55.
 
The abelianization of the other Coxeter groups ($B_n$, $F_4$, and $I_2(p)$ for $p$ even) are Klein $4$ groups.  For these, fix two non-conjugate simple reflections $s_1,s_2 \in S$, and multiplicative characters $\omega_1,\omega_2$ so that $\omega_1(s_1)=-1$, $\omega_1(s_2)=1$, $\omega_2(s_1)=1$, $\omega_2(s_2)=-1$.  Then $\eps_W=\omega_1 \cdot\omega_2$ and the multiplicative characters of $W$ are $\{ 1, \omega_1,\omega_2, \omega_1 \cdot\omega_2 \}$.  
 
 \begin{proposition} \label{solomon.principle2} Suppose $|W_{\ab}|=4$, and let $s_1,s_2,\omega_1,\omega_2$ be as above.  If $\pi$ is a representation of $W$, then 
\beq
\det \pi=(\omega_1)^{x_1}(\omega_2)^{x_2},
\eeq
where
\beq
x_1=\frac{\dim \pi-\chi_\pi(s_1)}{2},
\eeq
and
\beq
x_2=\frac{\dim \pi-\chi_\pi(s_2)}{2}.
\eeq
\end{proposition}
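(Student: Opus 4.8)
The plan is to mimic the proof of Proposition \ref{solomon.principle}, now applying the eigenvalue count to each of $s_1$ and $s_2$ and then identifying the character $\det \pi$ by its pair of values on these two reflections.

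First, since $\det\colon \GL(V) \to \C^\times$ is a homomorphism, $\det \pi = \det \circ\, \pi$ is a multiplicative character of $W$, hence lies in $\{1, \omega_1, \omega_2, \omega_1 \omega_2\}$. For each $i \in \{1,2\}$, the operator $\pi(s_i)$ is an involution, so it is diagonalizable with eigenvalues in $\{\pm 1\}$; letting $a_i$ and $x_i$ be the multiplicities of $+1$ and $-1$ respectively, the same computation as before gives $\dim \pi = a_i + x_i$ and $\chi_\pi(s_i) = a_i - x_i$, so that $x_i = (\dim \pi - \chi_\pi(s_i))/2$ and $\det \pi(s_i) = (-1)^{x_i}$.

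It then remains to recover $\det \pi$ from the pair $(\det \pi(s_1), \det \pi(s_2)) = ((-1)^{x_1}, (-1)^{x_2})$. For this I would record the values of the four characters on $s_1, s_2$ from the defining relations $\omega_1(s_1) = -1$, $\omega_1(s_2) = 1$, $\omega_2(s_1) = 1$, $\omega_2(s_2) = -1$: the pairs $(\chi(s_1), \chi(s_2))$ obtained as $\chi$ ranges over $1, \omega_1, \omega_2, \omega_1\omega_2$ are $(1,1)$, $(-1,1)$, $(1,-1)$, $(-1,-1)$, which are pairwise distinct. Hence the evaluation map $\chi \mapsto (\chi(s_1), \chi(s_2))$ is a bijection onto $\{\pm 1\}^2$, and since $\omega_1^{x_1} \omega_2^{x_2}$ has the same pair of values as $\det \pi$, the two characters are equal.

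I expect no serious obstacle; the only point needing care is the observation that the hypothesis $|W_{\ab}| = 4$ — together with the non-conjugacy of $s_1, s_2$, which is precisely what allows $\omega_1, \omega_2$ to take differing values on them — makes the four multiplicative characters separated by their values on $s_1$ and $s_2$. This is exactly the ingredient that upgrades the single-reflection argument of Proposition \ref{solomon.principle} to the present two-reflection setting.
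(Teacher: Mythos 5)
Your proof is correct and is essentially the paper's own argument: the paper simply says ``similar to the proof of Proposition \ref{solomon.principle},'' i.e.\ the same eigenvalue-multiplicity count applied to each $s_i$, and your added observation that the four multiplicative characters are separated by their values on $(s_1,s_2)$ is exactly the routine step left implicit there.
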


\begin{proof} Similar to the proof of Proposition \ref{solomon.principle}.
\end{proof}

 \section{Type $I_2(p)$: Dihedral Groups}
 
 Let us sketch the case of dihedral groups.  Let $p \geq 1$ be a positive integer, and let $W=D_p$ be the dihedral group of order $2p$.  
 The irreducible representations of $W$ are well-known: they are either one-dimensional, or  induced from the normal cyclic subgroup of order $p$, thus two-dimensional.
 It is elementary to check that the determinant of each two-dimensional irreducible representation is $\eps_W$.
 
 \bigskip
 
 {\bf Case $p$ odd:}  In this case $|W_{\ab}|=2$, so $1$ and $\eps_W$ are the only multiplicative characters.    From the above,
 \beq
N_{1}(p)=\#\{\pi \in\Irr(D_p)\mid\det \pi=1\}=1
\eeq
and 
\beq{}
N_{\eps_W}(p)=\#\{\pi \in\Irr(D_p)\mid\det \pi=\eps_W\}=\frac{p+1}{2}.
\eeq{}
 
{\bf Case $p$ even:} Now  $|W_{\ab}|=4$, so there are four multiplicative characters.   We have $N_{\eps_W}(p)=\dfrac{p}{2}$ and $N_\omega(p)=1$ for $\omega \neq \eps_W$.

 \section{Type $A_n$: Symmetric Groups}
 
 In this section we review and develop material from  \cite{macdonald}, \cite{olsson}, and \cite{APS-chiral} for $S_n$, which we will apply to the Type $B_n$ Coxeter groups in the next section.  The irreducible representations of $S_n$ are indexed by partitions $\lambda$ of $n$.  Given $\la \vdash n$, we denote by $(\rho_\la,V_\la)$ the corresponding representation of $S_n$.   
 
 \subsection{Cores, Quotients, and Towers} \label{tower.sec}

 Let us quickly recall the theory of $2$-cores and $2$-quotients of partitions.  Details may be found in \cite{olsson}.
   \begin{defn}  We say that a partition $\la$ is a $2$-core partition, provided that none of its hook lengths are even.  The set of $2$-core partitions is denoted by $\mc C_2$.
  \end{defn}
    
    There is assigned to any partition $\la$ a partition $\core_2 \la \in \mc C_2$ called the  $2$-core of $\la$,  and a bipartition $(\alpha,\beta)$, called the $2$-quotient of $\la$.   These have the property that 
    \begin{equation} \label{song}
    |\la|=|\core_2 \la|+2 (|\alpha|+|\beta|).
    \end{equation}
       Moreover, this assignment is a bijection between $\Lambda$ and $\mc C_2 \times \Lambda \times \Lambda$.     
 In what follows, we iterate the core-quotient procedure to obtain what is called the $2$-core tower of $\lambda$.
 This is an infinite binary tree, with each node labeled with a $2$-core partition.  For uniformity, let us put $\lambda_0=\alpha$ and $\lambda_1=\beta$.
 The tower is organized into rows, one for each nonnegative integer.
 Its $0$th row is the root of the tree, labeled with $\alpha_\emptyset := \core_2\lambda$.  Its first row comprises two nodes
 \beq
  \alpha_0, \alpha_1,
\eeq
where, if $\quo_2\lambda = (\lambda_0,\lambda_1)$, then $\alpha_i = \core_2{\lambda_i}$.
Let $\quo_2{\lambda_i} = (\lambda_{i0},\lambda_{i1})$, and define $\alpha_{ij} = \core_2{\lambda_{ij}}$.
The second row  is 
\begin{displaymath}
  \alpha_{00}, \alpha_{01}, \alpha_{10}, \alpha_{11}.
\end{displaymath}

Recursively, having defined partitions $\lambda_x$ for a binary sequence $x$, define the partitions $\lambda_{x0}$ and $\lambda_{x1}$ by
\begin{equation}
  \label{eq:6}
  \quo_2{\lambda_x} = (\lambda_{x0}, \lambda_{x1}),
\end{equation}
and let $\alpha_{x\epsilon} = \core_2{\lambda_{x\epsilon}}$ for $\epsilon=0,1$.
The $i$th row of the $2$-core tower of $\lambda$ consists of the partitions $\alpha_x$, where $x$ runs over the set of all $2^i$ binary sequences of length $i$, listed from left to right in lexicographic order.

Thus the $2$-core tower is:
\begin{displaymath}
  \resizebox{.9\hsize}{!}{
  \xymatrix@C=0em{
    &&&&&&& \alpha_{\emptyset}\ar@{-}[dllll]\ar@{-}[drrrr] &&&&&&&\\
    &&&\alpha_{0}\ar@{-}[dll]\ar@{-}[drr] &&&&&&&& \alpha_{1} \ar@{-}[dll]\ar@{-}[drr] &&&\\
    &\alpha_{00}\ar@{-}[dl]\ar@{-}[dr]&&&& \alpha_{01}\ar@{-}[dl]\ar@{-}[dr] &&&&\alpha_{10}\ar@{-}[dl]\ar@{-}[dr]&&&&\alpha_{11}\ar@{-}[dl]\ar@{-}[dr]&\\
    \alpha_{000}&&\alpha_{001}&&\alpha_{010}&&\alpha_{011}&&\alpha_{100}&&\alpha_{101}&&\alpha_{110}&&\alpha_{111} \\
   &&&&&&&  \vdots
  }}
\end{displaymath}

This is called the $2$-core tower of $\la$, which we denote by `$T_\la$'.  Note that $T_\la$ has nonempty partitions at only finitely many nodes.  Moreover $\la$ can be reconstructed from $T_\la$.

If $w_i(\la)$ is the sum of the sizes of the partitions in the $i$th row of $T_\la$, then iterating (\ref{song}) gives
\begin{equation} \label{weights.tower}
|\la| = \sum_{i=0}^\infty w_i(\la)2^i.
\end{equation}

{\bf Example:}  Let $\la=(12,2,1,1)$.  The $2$-core of $\la$ is empty and its $2$-quotient is $((1,1), (6))$.
The $2$-core of $(1,1)$ is also empty, and its $2$-quotient is $((1),\emptyset)$.  The partition $(6)$ has empty $2$-core, and $2$-quotient  $(\emptyset,(3))$.  
Finally, $(3)$ has $2$-core $(1)$ and $2$-quotient $((1),\emptyset)$.  From this we can write down $T_\la$:

\begin{displaymath}
  \xymatrix@C=0.7em{
    &&&&&&& \emptyset \ar@{-}[dllll]\ar@{-}[drrrr] &&&&&&&\\
    &&&\emptyset\ar@{-}[dll]\ar@{-}[drr] &&&&&&&& \emptyset \ar@{-}[dll]\ar@{-}[drr] &&&\\
    &(1) \ar@{-}[dl]\ar@{-}[dr]&&&& \emptyset \ar@{-}[dl]\ar@{-}[dr] &&&& \emptyset \ar@{-}[dl]\ar@{-}[dr]&&&& (1) \ar@{-}[dl]\ar@{-}[dr]&\\
    \emptyset &&\emptyset&&\emptyset&&\emptyset&&\emptyset&&\emptyset&&(1) &&\emptyset \\
  &&&&&&&  \vdots
  }
\end{displaymath}

  The fourth row and below are empty.
  
  We also define a map $\phi: \Lambda_a \times \Lambda_b \to \Lambda_{2(a+b)}$ so that $\phi(\alpha,\beta)$ has trivial $2$-core, and its $2$-quotient is $(\alpha,\beta)$. Note $T_{\phi(\alpha,\beta)}$ is obtained simply by joining $T_\alpha$ and $T_\beta$ side-by-side with root node labeled `$\emptyset$'.  For example, $(12,2,1,1)=\phi((1,1), (6))$ and $(6)=\phi(\emptyset,(3))$.   All partitions of an even number $n$ with trivial $2$-core are uniquely of the form $\phi(\alpha,\beta)$ for some $(\alpha,\beta) \models \frac{n}{2}$.

 \subsection{Review of Macdonald}
 
 \begin{defn}  Write $f_\la$ for the dimension of $V_\la$.  Say that a partition $\la$ is odd, provided that $f_\la$ is odd.  Otherwise say that $\la$ is even.  Write $\Lambda^{\odd}$ for the set of odd partitions, and
 $\Lambda^{\odd}_n$ for the set of odd partitions of $n$.
Write $A(n)=|\Lambda^{\odd}_n|$.
 \end{defn}
 
 The main result of \cite{macdonald} is:
 \begin{theorem} A partition is odd precisely when $w_i(\la) \leq 1$ for all $i \geq 0$.  \end{theorem}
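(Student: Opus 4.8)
The plan is to pin down the exact $2$-adic valuation $\ord(f_\la)$ and read the criterion off from it. By the hook length formula, $f_\la = n!/H_\la$, where $H_\la$ is the product of the hook lengths of $\la$, so that $\ord(f_\la) = \ord(n!) - \ord(H_\la)$. Legendre's formula gives $\ord(n!) = n - \nu(n)$, and the whole problem is thereby transferred to computing $\ord(H_\la)$ from the $2$-core tower.

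For $\ord(H_\la)$ I would invoke the classical behaviour of hook lengths under the core--quotient map (see \cite{olsson}): if $\quo_2\la = (\la_0,\la_1)$, then halving the even hook lengths of $\la$ yields exactly the hook lengths of $\la_0$ together with those of $\la_1$. In particular the number of even hook lengths of $\la$ is $|\la_0|+|\la_1|$, and taking $\ord$ term by term gives the recursion
\[
\ord(H_\la) = \bigl(|\la_0|+|\la_1|\bigr) + \ord(H_{\la_0}) + \ord(H_{\la_1}).
\]
Since a $2$-core has only odd hook lengths, iterating this recursion down $T_\la$ terminates and expresses $\ord(H_\la)$ as $\sum_{i \geq 1} t_i$, where $t_i := \sum_{|x|=i}|\la_x|$ counts the boxes of the intermediate partitions at depth $i$ (so $t_0 = n$). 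Applying (\ref{song}) to each $\la_x$ gives $t_i = w_i(\la) + 2 t_{i+1}$, hence $t_i = \sum_{j \geq i} w_j(\la)\,2^{j-i}$; interchanging the order of summation then yields
\[
\ord(H_\la) = \sum_{i \geq 1} t_i = \sum_{j \geq 0} w_j(\la)\,(2^j - 1) = n - m, \qquad m := \sum_{i \geq 0} w_i(\la).
\]
Combining the three computations produces the clean formula $\ord(f_\la) = m - \nu(n)$, where $m$ is the total number of boxes in the tower $T_\la$.

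It then remains to establish the elementary arithmetic statement that $m \geq \nu(n)$, with equality precisely when $w_i(\la) \leq 1$ for every $i$. The relation $n = \sum_i w_i(\la)\,2^i$ writes $n$ as a sum of $m$ powers of $2$ counted with multiplicity. If every $w_i(\la) \leq 1$, then by uniqueness of binary expansions $(w_i(\la))_i$ must be the binary digit string of $n$, so $m = \nu(n)$. Conversely, if some $w_j(\la) \geq 2$, the carry move $w_j \mapsto w_j - 2$, $w_{j+1} \mapsto w_{j+1}+1$ leaves $\sum_i w_i\,2^i$ unchanged while strictly decreasing $\sum_i w_i$; iterating it terminates at the binary expansion, so the original $m$ exceeded $\nu(n)$. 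Hence $\ord(f_\la) = 0$, i.e.\ $\la$ is odd, if and only if $w_i(\la) \leq 1$ for all $i$.

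The one substantive ingredient is the core--quotient behaviour of hook lengths used above; granting that classical fact, the remainder is just Legendre's formula, a geometric-series rearrangement, and the minimality of the binary representation among sums of powers of $2$, all of which I expect to be routine.
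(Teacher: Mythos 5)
Your proposal is correct, and all steps check out: the hook-length/quotient correspondence you cite is indeed classical (it is in \cite{olsson}), the recursion $\ord(H_\la)=(|\la_0|+|\la_1|)+\ord(H_{\la_0})+\ord(H_{\la_1})$ follows from it, and the summation interchange together with Legendre's formula gives the valuation identity $\ord(f_\la)=\sum_i w_i(\la)-\nu(n)$, from which the theorem follows by the carry argument. Note, however, that the paper itself offers no proof of this statement: it is quoted as the main result of \cite{macdonald}, and what you have reconstructed is essentially Macdonald's original argument (specialized to $p=2$), which in fact proves the stronger exact formula for $\ord(f_\la)$ rather than only the characterization of when it vanishes.
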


In other words, it is odd when its $2$-core tower has at most one cell in each row.  For example, from the towers we see that the partitions $(12,2,1,1)$ is even, while the partitions $(1,1)$ and $(6)$ are odd.

By (\ref{weights.tower}), if $\la \vdash n$ is an odd partition, then $\bin(n)$ is equal to the set of indices of the nontrivial rows of $T_\la$.

\begin{cor}   For $n$ with binary expansion 
 \begin{equation*}
 n= \epsilon + 2^{k_1} + \cdots + 2^{k_r},
 \end{equation*}
 put
\beq{}
\alpha(n) = k_1+k_2 + \cdots + k_r.
\eeq{}
Then $A(n)=2^{\alpha(n)}$.  
\end{cor}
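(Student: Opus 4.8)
The plan is to count odd partitions of $n$ by leveraging the combinatorial characterization just established: a partition $\la$ is odd precisely when its $2$-core tower $T_\la$ has at most one cell (i.e.\ $w_i(\la) \leq 1$) in each row. By the remark following Macdonald's theorem, if $\la \vdash n$ is odd, then $\bin(n)$ is exactly the set of indices of the nontrivial rows of $T_\la$. So the row indices carrying a single cell are forced to be precisely $\{k_1, \ldots, k_r\}$, together with $0$ if $\epsilon = 1$. Thus to count $A(n)$, I would count the number of ways to place a single nonempty $2$-core partition at one node in each of these prescribed rows, and empty partitions everywhere else.

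First I would recall that the nodes in row $i$ of the binary tree $T_\la$ number exactly $2^i$. At each nontrivial row, the single cell (a $2$-core partition of size $1$) may be placed at any one of these $2^i$ nodes, and this choice is free and independent across rows. Since the only $2$-core partition of size $1$ is $(1)$ itself, the choice of partition is determined once the node is chosen; all other nodes in that row carry the empty partition. The second step is to verify that every such assignment does arise from a genuine partition of $n$: by the bijection between $\Lambda$ and the space of $2$-core towers (the iterated core-quotient correspondence), any tower with finitely many nonempty nodes reconstructs a unique partition, and equation (\ref{weights.tower}) guarantees its size is $\sum_i w_i(\la) 2^i = \sum_{i \in \bin(n)} 2^i = n$.

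Combining these, the count factors as a product over the nontrivial rows:
\beq
A(n) = \prod_{i \in \bin(n)} 2^i = 2^{\epsilon \cdot 0} \cdot 2^{k_1} \cdots 2^{k_r} = 2^{k_1 + \cdots + k_r} = 2^{\alpha(n)},
\eeq
where the possible factor $2^0 = 1$ coming from row $0$ (when $\epsilon = 1$) contributes nothing, which is exactly why $\alpha(n)$ omits the lowest bit. This matches the stated formula.

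I do not expect a serious obstacle here, since this corollary is essentially a direct bookkeeping consequence of Macdonald's theorem together with the tower-to-partition bijection. The one point requiring a little care is the independence of the row-choices and the observation that row $0$ (present iff $n$ is odd) contributes a factor of $2^0 = 1$ rather than being excluded outright; this is precisely what reconciles the product over \emph{all} indices in $\bin(n)$ with the exponent $\alpha(n)$, which by definition sums only $k_1 + \cdots + k_r$. Beyond that, invoking the established bijection to guarantee each tower corresponds to a unique valid partition is the only non-purely-arithmetic ingredient, and it is available from Section \ref{tower.sec}.
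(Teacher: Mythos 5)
Your proof is correct and is precisely the argument the paper leaves implicit: Macdonald's theorem forces the nontrivial rows of the tower to be exactly $\bin(n)$, each such row $i$ offers $2^i$ independent placements of the single cell $(1)$, and the tower--partition bijection turns these choices into odd partitions of $n$, giving $A(n)=\prod_{i\in\bin(n)}2^i=2^{\alpha(n)}$. Your attention to the row-$0$ factor $2^0=1$ (when $n$ is odd) matches exactly why $\alpha(n)$ omits the lowest bit.
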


In other words, if we express $n$ in binary as a sum of powers of $2$, then $A(n)$ is the {\it product} of those powers of $2$.
In particular, if $n$ is a power of $2$, then $A(n)=n$.  In this case the odd partitions of $n$ are precisely the hooks of length $n$.

Let us record some properties of $A(n)$ for later use.

\begin{lemma} \label{alpha.increment}
\begin{enumerate}
\item If $x+y \doteq n$, then $A(n)=A(x)A(y)$.
\item If $n$ is odd, then $A(n-1)=A(n)$.
\item More generally, if $0 \leq j \leq k=\ord(n')$, then
\beq
A(n-2^j)= 2^{-k+ \binom{k}{2}-\binom{j}{2}}A(n).
\eeq
\item For all $n$ we have
\beq
A(2n)=2^{\nu(n)}A(n).
\eeq
\end{enumerate}
\end{lemma}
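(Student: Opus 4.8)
The plan is to reduce all four parts to a single observation: $\log_2 A(n)=\alpha(n)$ is precisely the \emph{sum of the positions of the $1$'s in the binary expansion of $n$}, i.e. $\alpha(n)=\sum_{i\in\bin(n)}i$ (the index $0$, present when $n$ is odd, contributes nothing). Thus $\alpha$ is additive over disjoint binary supports, and the lemma becomes a sequence of bookkeeping exercises with $\bin$. I would also record the trivial base values $A(2^m)=2^m$ and $A(1)=1$.

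Part (1) is then immediate: $x+y\doteq n$ says exactly that $\bin(x)$ and $\bin(y)$ are disjoint with union $\bin(n)$, so summing positions gives $\alpha(n)=\alpha(x)+\alpha(y)$ and hence $A(n)=A(x)A(y)$. Part (2) is the special case $1+(n-1)\doteq n$, valid because $0\notin\bin(n-1)$ when $n$ is odd; since $A(1)=1$, part (1) gives $A(n-1)=A(n)$.

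For part (3) I would use the integer identity $n-2^{j}=(2^{k}-2^{j})+(n-2^{k})$. The block $2^{k}-2^{j}$ carries $1$'s exactly in positions $j,j+1,\ldots,k-1$, so its support is disjoint from $\bin(n)\setminus\{k\}$, the support of $n-2^{k}$, \emph{provided} this sum is carry-free; this holds for every $j\ge 1$, and also for $j=0$ when $n$ is even. Applying (1) then gives $A(n-2^{j})=A(2^{k}-2^{j})\,A(n-2^{k})$. Here $\alpha(2^{k}-2^{j})=\sum_{i=j}^{k-1}i=\binom{k}{2}-\binom{j}{2}$, while deleting the isolated bit at position $k$ gives $A(n-2^{k})=2^{-k}A(n)$; multiplying yields the stated $2^{-k+\binom{k}{2}-\binom{j}{2}}A(n)$. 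The one genuinely different case is $j=0$ with $n$ odd: there $n-1$ involves no borrow, the decomposition above is not carry-free, and the correct value is instead the one supplied by part (2).

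For part (4) the cleanest route is that doubling shifts every bit up by one, so $\bin(2n)=\{\,i+1:i\in\bin(n)\,\}$ and therefore $\alpha(2n)=\sum_{i\in\bin(n)}(i+1)=\alpha(n)+\nu(n)$, giving $A(2n)=2^{\nu(n)}A(n)$; equivalently one may iterate (1) on the carry-free expansion $2n=2^{k_1+1}+\cdots+2^{k_r+1}$ (with an extra summand $2^1$ when $n$ is odd), using $A(2^m)=2^m$. The only real obstacle in the whole lemma is the borrow bookkeeping in part (3): one must confirm carry-freeness for each parity of $n$ and evaluate the telescoping sum $\sum_{i=j}^{k-1}i=\binom{k}{2}-\binom{j}{2}$, with the degenerate odd $j=0$ case peeled off and handed to part (2).
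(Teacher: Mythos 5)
Your proof is correct and takes essentially the same route as the paper: both reduce everything to the additivity of $\alpha(n)=\log_2 A(n)$ over carry-free sums, decompose $n-2^j$ as $(n-2^k)+(2^k-2^j)$, and evaluate $\alpha(2^k-2^j)=j+(j+1)+\cdots+(k-1)=\binom{k}{2}-\binom{j}{2}$. If anything you are more careful than the paper, which applies this decomposition without comment even when $j=0$ and $n$ is odd, where it is not carry-free and where, as you correctly observe, the formula of part (3) as literally stated actually fails (e.g.\ $n=5$, $j=0$ gives $A(4)=4$, not $2^{-2+1}A(5)=2$), the correct value there being supplied by part (2).
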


\begin{proof}
For the third part, we have
\beq
\begin{split}
\alpha(n)& =\alpha(n-2^k)+\alpha(2^k) \\
		&=\alpha(n-2^k)+k,\\
		\end{split}
		\eeq
and
\beq
\alpha(n-2^j)=\alpha(n-2^k)+\alpha(2^k-2^j).
\eeq
Now
\beq
\begin{split}
\alpha(2^k-2^j) &=j+(j+1)+ \cdots + (k-1) \\
			&=\binom{k}{2}-\binom{j}{2}. \\
			\end{split}
			\eeq
			Thus
			\beq
\alpha(n-2^j)=\alpha(n) -k+ \binom{k}{2}-\binom{j}{2}.
\eeq
			
The result follows.  The other parts are straightforward.  Also see Proposition \ref{merge1} for a bijective proof of the first part.
\end{proof}

  Here are simple bounds for $A(n)$:  
  \begin{lemma} \label{A(n).bound} Let $n$ be a natural number.  We have
\beq
\half n < A(n) \leq n^{\half (\log_2n+1)}.
\eeq
\end{lemma}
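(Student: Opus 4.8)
The plan is to reduce both inequalities to elementary estimates on the exponent $\alpha(n)$, using the closed form $A(n)=2^{\alpha(n)}$ with $\alpha(n)=k_1+\cdots+k_r$ supplied by the Corollary giving $A(n)=2^{\alpha(n)}$. Throughout I would set $m=\lfloor\log_2 n\rfloor$, so that $2^m\le n<2^{m+1}$, and recall from (\ref{n}) that the exponents $k_1<\cdots<k_r$ are distinct positive integers, each necessarily at most $m$.

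For the lower bound I would isolate the leading binary digit of $n$. For $n\ge 2$ the top bit sits at position $m\ge 1$, so $k_r=m$ occurs among the exponents; since every $k_i$ is positive, $\alpha(n)\ge k_r=m$ and hence $A(n)=2^{\alpha(n)}\ge 2^m$. Combining this with $n<2^{m+1}$ gives $\tfrac12 n<2^m\le A(n)$, which is exactly the claimed strict inequality. The sole exceptional value $n=1$ is dispatched by hand, since $A(1)=1>\tfrac12$.

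For the upper bound I would instead maximize $\alpha(n)$. Because $k_1<\cdots<k_r$ are distinct positive integers bounded above by $m$, their sum is largest when they exhaust $\{1,2,\dots,m\}$, so $\alpha(n)=\sum_i k_i\le 1+2+\cdots+m=\tfrac{m(m+1)}{2}$. Writing $L=\log_2 n\ge m$ and invoking that $t\mapsto t(t+1)$ is increasing on $[0,\infty)$, this yields $\alpha(n)\le \tfrac{L(L+1)}{2}$. Exponentiating then gives $A(n)=2^{\alpha(n)}\le 2^{L(L+1)/2}=(2^L)^{(L+1)/2}=n^{\frac12(\log_2 n+1)}$, as required.

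Neither half presents a genuine obstacle; the only points needing care are matching the integer $m=\lfloor\log_2 n\rfloor$ to the real number $\log_2 n$ (handled cleanly by the monotonicity of $t\mapsto t(t+1)$) and treating the degenerate values $n=0,1$ separately. I would remark that the constant $\tfrac12$ in the lower bound is a convenient round figure rather than the optimal one, whereas the upper bound is asymptotically tight in the exponent for $n$ of the form $2^{m+1}-1$, where all low bits are set.
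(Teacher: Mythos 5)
Your proposal is correct and follows essentially the same route as the paper: both bound $\alpha(n)$ below by the top bit position $\lfloor\log_2 n\rfloor$ (giving $A(n)\geq 2^{\lfloor\log_2 n\rfloor}>\tfrac12 n$) and above by $1+2+\cdots+\lfloor\log_2 n\rfloor\leq\tfrac12\log_2 n(\log_2 n+1)$, then exponentiate using $A(n)=2^{\alpha(n)}$. Your explicit treatment of $n=1$ and the remark on tightness are minor refinements of the same argument.
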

\begin{proof}
We have
\beq
\begin{split}
\log_2 n-1 < [\log_2 n] \leq \alpha(n) & \leq 1+2 + \cdots + [\log_2 n] \\
&= \frac{[\log_2n]([\log_2n]+1)}{2} \\
							&\leq \half \log_2n (\log_2n +1). \\
\end{split}
\eeq
Since $A(n)=2^{\alpha(n)}$, this gives the lemma.
\end{proof}

 \subsection{Review of Ayyer-Prasad-Spallone}

 Put $s_1=(12) \in S_n$ for $n \geq 2$.
 
 \begin{defn} 
 Given $\la \vdash n$, put 
 \beq
 g_\la=\frac{f_\la-\chi_\la(s_1)}{2}.
 \eeq
 Say that a partition $\la$ is chiral, provided that $\det \rho_\la =\sgn$.
  Write $B(n)$ for the number of chiral partitions of $n$.
 \end{defn}
 By Proposition \ref{solomon.principle}, $\la$ is chiral iff $g_\la$ is odd.
 Recall the notation `$\ord$', `$\bin$', and $k=\ord(n')$ from Section \ref{bin.notation}.  

\begin{theorem} \label{APS-chiral}
\cite[Theorem 6]{APS-chiral} 
  Let $\la$ be a partition with $2$-quotient $(\alpha,\beta)$ with $|\alpha|=a$ and $|\beta|=b$.
Then $\lambda$ is chiral if and only if one of the following holds:
\label{chiral}
\begin{enumerate}
\item   $\la$ is odd, and 
\begin{enumerate}
\item if $n$ is even, then $k-1 \in \bin(a)$.
\item if $n$ is odd, then $k-1 \in \bin(b)$.
\end{enumerate}
 \item $\core_2 \la =\emptyset$ or $(1)$, and
\begin{enumerate}
\item $\alpha$ and $\beta$ are odd, 
\item $\bin(a) \cap \bin(b)= \{ j \}$, with $j=\ord(a)=\ord(b)$.   
\end{enumerate}
\item $\core_2 \la=(2,1)$ and $\phi(\alpha, \beta)$ is odd.    
\end{enumerate}
\end{theorem}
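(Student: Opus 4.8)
The plan is to convert chirality into a parity count and then read that count off the $2$-core tower. By Proposition \ref{solomon.principle}, $\la$ is chiral exactly when $g_\la$ is odd, and $g_\la$ is the multiplicity of $-1$ as an eigenvalue of $\rho_\la((12))$. First I would identify this multiplicity combinatorially. Restricting $\rho_\la$ to the Young subgroup $S_2 \times S_{n-2}$, with $(12)$ generating the $S_2$ factor, and applying the branching rule, the $-1$-eigenspace is the $\sgn \boxtimes (\cdot)$ isotypic part; since $(12)$ acts by $-1$ on $\rho_{(1,1)}$ and by $+1$ on $\rho_{(2)}$, its dimension is $\sum_\nu c^\la_{(1,1),\nu} f_\nu = f_{\la/(1,1)}$, where $c^\la_{(1,1),\nu}$ is a Littlewood--Richardson coefficient. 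As $c^\la_{(1,1),\nu}\in\{0,1\}$, equal to $1$ precisely when $\la/\nu$ is a vertical strip of size $2$ (two cells in distinct rows), I obtain $g_\la = \sum_{\nu} f_\nu$ over such $\nu \vdash n-2$. Reducing modulo $2$ and invoking Macdonald's Theorem ($f_\nu$ odd $\iff \nu$ odd),
\[
g_\la \equiv \#\{\,\nu \in \Lambda^{\odd}_{n-2} : \la/\nu \text{ is a vertical } 2\text{-strip}\,\} \pmod 2,
\]
so everything reduces to counting, modulo $2$, the odd partitions reachable from $\la$ by deleting two cells in distinct rows.

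Next I would convert cell-deletion into the arithmetic of the tower, using the two-runner abacus, on which $\core_2\la$ and $\quo_2\la=(\alpha,\beta)$ are read off from the two runners and every deletion is a single bead-move by one position. A vertical $2$-strip deletion is of two kinds: a connected vertical domino, which preserves $\core_2\la$ and erases one cell from $\alpha$ or $\beta$; or a disconnected pair, which moves beads across the runners and thereby alters $\core_2\la$. The key constraint is that an odd $\nu$ has $w_0(\nu)=|\core_2\nu|\le 1$, hence $\core_2\nu\in\{\emptyset,(1)\}$. A short analysis of the $2$-core lattice (whose sizes are the triangular numbers $0,1,3,6,\dots$), together with the parity relation $|\core_2\nu|\equiv|\core_2\la|\pmod 2$, shows that only $\core_2\la\in\{\emptyset,(1),(2,1)\}$ admit such a $\nu$; for every other value the count is empty, so $g_\la$ is even and $\la$ is not chiral. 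This is what isolates the three cases.

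I would then treat the surviving cores separately, using $w_i(\la)=w_{i-1}(\alpha)+w_{i-1}(\beta)$ to translate ``$\nu$ odd'' into statements about $\bin(a)$ and $\bin(b)$, and recalling $n$ even $\iff \core_2\la=\emptyset$, $n$ odd $\iff \core_2\la=(1)$. When $\core_2\la\in\{\emptyset,(1)\}$ the contributing deletions are the core-preserving vertical dominoes, so I am counting removable cells of $\alpha$ (respectively $\beta$) whose deletion leaves an odd quotient with disjoint binary supports; a parity count in terms of $k=\ord(n')$ collapses this to case (1) when $\la$ is already odd and to case (2) when exactly one binary carry is present, i.e.\ $\bin(a)\cap\bin(b)=\{j\}$ with $j=\ord(a)=\ord(b)$. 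When $\core_2\la=(2,1)$ the contributing deletions are instead the disconnected pairs shrinking the core to $(1)$; tracking these on the abacus shows the count is governed by the oddness of the glued partition $\phi(\alpha,\beta)$, giving case (3).

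The main obstacle I anticipate is the bookkeeping of the \emph{disconnected} vertical $2$-strips, and, even for the connected case, deciding which component-cell removals correspond to vertical rather than horizontal dominoes (this depends on the occupancy of the intervening abacus position). Unlike domino removals, disconnected deletions change the core and the quotient simultaneously, and several deletions may land on the same odd $\nu$ or on distinct ones, so showing that the modulo-$2$ totals cancel down to the clean stated conditions, rather than leaving residual terms, is delicate. I would control this entirely on the two-runner abacus, classifying bead-moves by runner-crossing and by the intervening occupancy, and then carrying out the modulo-$2$ count of the resulting odd configurations via equation (\ref{nu.formula}) and Lemma \ref{alpha.increment}, organizing the three cases by induction on the $2$-weight $|\alpha|+|\beta|$ through the decomposition $\la\leftrightarrow(\core_2\la,\alpha,\beta)$, with Macdonald's criterion supplying the $f$-parities.
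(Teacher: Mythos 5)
First, a point of orientation: the paper does not prove this theorem at all — it is quoted verbatim from \cite{APS-chiral} (their Theorem 6) — so there is no internal proof to measure your attempt against, and it must stand on its own. Your opening reduction is correct and is indeed the natural starting point: by Proposition \ref{solomon.principle}, $\la$ is chiral iff $g_\la$ is odd; the branching/Pieri argument correctly gives $g_\la=f_{\la/(1,1)}=\sum_\nu f_\nu$, summed over $\nu$ with $\la/\nu$ a vertical $2$-strip; and Macdonald's theorem turns chirality into a mod-$2$ count of \emph{odd} such $\nu$.

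The two structural claims you then build on are both false, however, and they are exactly where the content of the theorem sits. (a) It is not true that for $\core_2\la\notin\{\emptyset,(1),(2,1)\}$ the set of odd $\nu$ is empty. Take $\la=(3,2,1)$, which is its own $2$-core: deleting the vertical $2$-strips $\{(1,3),(2,2)\}$ and $\{(2,2),(3,1)\}$ produces $(2,1,1)$ and $(3,1)$, both odd. The count is $2$ — even, as the theorem demands, but not empty. A single cell deletion is one runner-crossing bead move and can change the core by much more than your parity remark allows (e.g.\ $\core_2(2,1)=(2,1)$ while $\core_2(2)=\core_2(1,1)=\emptyset$), and the imbalance bookkeeping rules out only sufficiently large staircase cores; for the intermediate ones, such as $(3,2,1)$, the correct assertion is that the count is \emph{even}, and proving that requires an explicit involution, not a parity observation. (b) It is not true that for $\core_2\la\in\{\emptyset,(1)\}$ only core-preserving vertical dominoes contribute: a disconnected pair of cell deletions whose two bead moves cross the runners in \emph{opposite} directions also preserves the core, while changing the quotient in a way that is not a single cell deletion from $\alpha$ or $\beta$. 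This is fatal already for $\la=(3,1)$: its core is $\emptyset$, it has no removable vertical domino at all, yet it is chiral ($\rho_{(3,1)}$ is the standard representation of $S_4$, and a transposition has determinant $-1$ on it); the unique contributing deletion is the disconnected one giving $\nu=(2)$. Your accounting would return $g_{(3,1)}\equiv 0$ and the wrong conclusion. So the strategy is sound — and you correctly flagged the disconnected strips as the danger — but the two assertions that were supposed to tame them are precisely wrong, and repairing them (the involution in (a), and the full classification of core-preserving disconnected deletions in (b)) is the actual proof.
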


 \begin{corollary} \label{f.g.correlation}
\begin{enumerate}
\item $\# \{ \la \vdash n \mid f_{\lambda} \equiv g_{\lambda} \equiv 1 \}=\frac{1}{2}A(n)$. 
\item $\# \{ \la \vdash n \mid f_{\lambda} \equiv 1, g_{\lambda} \equiv 0 \}=\frac{1}{2}A(n)$. 
\item $\# \{ \la \vdash n \mid f_{\lambda} \equiv 0, g_{\lambda} \equiv 1 \}=B(n)-\frac{1}{2}A(n)$. 
\end{enumerate}
\end{corollary}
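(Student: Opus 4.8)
The plan is to reduce all three statements to a single count, namely the number of partitions of $n$ that are simultaneously odd and chiral. Recall that $f_\la \equiv 1$ means $\la$ is odd, while $g_\la \equiv 1$ means $\la$ is chiral by Proposition \ref{solomon.principle}. The odd partitions of $n$ number $A(n)$ and split into the odd-chiral and odd-nonchiral classes, so once part (1) is established---that there are exactly $\frac12 A(n)$ odd chiral partitions---part (2) follows by subtraction. Likewise the chiral partitions of $n$ number $B(n)$ and split into odd-chiral and even-chiral, so part (3) follows from part (1) by subtraction as well. I would open the proof by recording this reduction, leaving only part (1) to prove.

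For part (1) I would describe odd partitions concretely through their $2$-core towers. By Macdonald's theorem an odd partition $\la \vdash n$ satisfies $w_i(\la) \le 1$ for every $i$, so its tower carries a single cell, necessarily labeled $(1)$, in each row $i \in \bin(n)$ and is empty elsewhere. When $n$ is odd the root ($i=0$) is forced to be $(1)$, but for each $i \in \{k_1,\dots,k_r\}$ the lone cell may sit at any of the $2^i$ positions in row $i$, encoded by a binary string of length $i$. This both recovers $A(n) = 2^{k_1+\cdots+k_r} = 2^{\alpha(n)}$ and exhibits an odd partition as an independent choice of cell position in each nontrivial row.

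The next step, which I expect to be the crux, is to translate the chirality criterion of Theorem \ref{chiral}(1) into this language. For odd $\la$ the relevant condition is $k-1 \in \bin(a)$ (if $n$ is even) or $k-1 \in \bin(b)$ (if $n$ is odd), where $(\alpha,\beta) = \quo_2\la$, $a = |\alpha|$, $b = |\beta|$, and $k = k_1 = \ord(n')$. Since $T_\alpha$ and $T_\beta$ are the left and right subtrees of $T_\la$, one has $w_j(\alpha) = 1$ exactly when the cell in row $j+1$ of $T_\la$ lies in the left half, so $\bin(a)$ reads off the left-half cells and $\bin(b)$ the right-half cells. Because $k_1 < k_2 < \cdots$, the membership $k-1 \in \bin(a)$ (resp.\ $\bin(b)$) is equivalent to the single assertion that the cell in row $k$---whose position string has a well-defined first bit since $k \ge 1$---lies in the left (resp.\ right) half, i.e.\ that this first bit is $0$ (resp.\ $1$). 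Keeping straight how $\bin(a)$ and $\bin(b)$ encode left/right placement is the bookkeeping that demands the most care.

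With that translation in hand, part (1) falls out of an involution. I would define $\iota \colon \Lambda_n^{\odd} \to \Lambda_n^{\odd}$ by flipping the first bit of the position string of the row-$k$ cell and leaving every other row untouched. This preserves all weights $w_i$, hence maps odd partitions of $n$ to odd partitions of $n$; it is plainly a fixed-point-free involution; and by the previous paragraph it interchanges chiral and nonchiral odd partitions. A fixed-point-free involution swapping the two classes forces them to be equinumerous, and since together they exhaust the $A(n)$ odd partitions, each class has size $\frac12 A(n)$. Here $n \ge 2$ guarantees that $k = k_1$ exists and that $A(n)$ is even. This establishes part (1), and parts (2) and (3) then follow by the subtractions noted at the outset.
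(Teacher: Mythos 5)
Your proof is correct and matches the paper's (implicit) argument: the paper states this as an immediate consequence of Theorem \ref{APS-chiral}, the point being exactly what you spell out --- by Macdonald's tower description, chirality of an odd partition is decided by the single bit recording whether the row-$k$ cell sits in the left or right half, so precisely half of the $A(n)$ odd partitions are chiral, and parts (2) and (3) follow by subtraction. Your explicit bit-flipping involution is a clean way to package the half-count that the paper itself invokes later (``The number of partitions satisfying (1) in Theorem \ref{APS-chiral} is $\frac{1}{2}A(n)$'') in its proof of the formula for $B(n)$.
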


Let us give special attention to the chiral partitions of type (2) in Theorem \ref{APS-chiral}.  The corresponding towers have at most one cell in each row, except for the $j$th row which has precisely two cells,   one   in the left half, and another in the right half.  Necessarily $j \geq 1$. Moreover, there are no cells in rows $1$ through $j-1$.  Its root is labeled $\emptyset$ or $(1)$.  Such towers we call ``$j$-domino towers", or ``domino towers" if we forget the $j$.  The above tower of $\la=(12,2,1,1)$ is a $2$-domino tower.

Write $\Lambda^D$ for the set of domino towers, and $\Lambda^j$ for the set of $j$-domino towers.  Let $D(n)=|\Lambda^D_n|$ and $D_j(n)=|\Lambda^j_n|$. 
Note that $D(n)=0$ if $n' \equiv 2 \mod 4$, and $D(n)=\sum_{j=1}^{k-1}D_j(n)$ otherwise.

  It is easy to see that
\begin{equation} \label{lute}
\begin{split}
D_j(n) &=2^{2(j-1)}A(n-2^{j+1}) \\
	&= 2^{2(j-1)-k+\binom{k}{2}-\binom{j+1}{2}}A(n) \\
	&=2^{\binom{k}{2}-\binom{j}{2}-k+j-2}A(n), \\
	\end{split}
	\end{equation}
 using Lemma \ref{alpha.increment}.

\begin{cor} 
\cite[Theorem 1]{APS-chiral} 

For $n \geq 2$ and $k=\ord(n')$, the number of chiral partitions is given by
\beq
B(n)=A(n)\left(\half + \epsilon \cdot 2^{\binom{k}{2}-k} +  \sum_{j=1}^{k-1} 2^{\binom{k}{2}-\binom{j}{2}-k+j-2} \right).
\eeq
\end{cor}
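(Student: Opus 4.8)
The plan is to use Theorem \ref{chiral} to split the chiral partitions of $n$ into the three types it describes, count each type separately, and add the counts. These types are mutually exclusive: a type (1) partition is odd, whereas a type (2) partition has a row of its $2$-core tower containing two cells (coming from the shared index $j \in \bin(a)\cap\bin(b)$) and a type (3) partition has a $0$th row of size $|(2,1)|=3$; in particular neither type (2) nor type (3) partitions are odd, and types (2) and (3) are distinguished by their $2$-cores. Hence $B(n)$ is the sum of the three counts, and it remains to match each summand with the corresponding term of the asserted formula.

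For type (1), an odd chiral partition is exactly one with $f_\la \equiv 1$ and $g_\la \equiv 1$, so by Corollary \ref{f.g.correlation}(1) there are precisely $\half A(n)$ of them, giving the term $\half A(n)$. For type (2), the chiral partitions with $2$-core $\emptyset$ or $(1)$ are exactly the domino towers, and their number is $D(n)=\sum_{j=1}^{k-1}D_j(n)$. Substituting the closed form for $D_j(n)$ from (\ref{lute}), namely $D_j(n)=2^{\binom{k}{2}-\binom{j}{2}-k+j-2}A(n)$, produces the term $A(n)\sum_{j=1}^{k-1}2^{\binom{k}{2}-\binom{j}{2}-k+j-2}$. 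When $n' \equiv 2 \Mod 4$ we have $k=1$ and this sum is empty, consistent with $D(n)=0$.

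The remaining term, governed by $\epsilon$, comes from type (3). Since $|(2,1)|=3$, relation (\ref{song}) forces $n$ to be odd, so this type contributes nothing when $n$ is even (where $\epsilon=0$). When $n$ is odd, a type (3) partition is determined by its $2$-core $(2,1)$ together with its $2$-quotient $(\alpha,\beta)$, subject only to the condition that $\phi(\alpha,\beta)$ is odd and $|\alpha|+|\beta|=\frac{n-3}{2}$. Since $\phi$ gives a bijection between bipartitions of $\frac{n-3}{2}$ and partitions of $n-3$ of trivial $2$-core, and $n-3$ is even (so every odd partition of $n-3$ automatically has trivial $2$-core), the count of type (3) partitions equals $A(n-3)$. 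Finally I would evaluate $A(n-3)$ in terms of $A(n)$: writing $n-3=(n-1)-2$ and noting $\ord(n-1)=k$, Lemma \ref{alpha.increment}(3) applied to the even number $n-1$ with $j=1$ gives $A(n-3)=2^{\binom{k}{2}-k}A(n-1)$, while Lemma \ref{alpha.increment}(2) gives $A(n-1)=A(n)$, so $A(n-3)=2^{\binom{k}{2}-k}A(n)$. This is the term $\epsilon \cdot 2^{\binom{k}{2}-k}A(n)$.

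Adding the three contributions yields the stated formula. The main obstacle is the type (3) count: one must correctly set up the chain $\la \leftrightarrow ((2,1),\alpha,\beta) \leftrightarrow \phi(\alpha,\beta)$ to reduce the count to $A(n-3)$, and then carry out the binary arithmetic $A(n-3)=2^{\binom{k}{2}-k}A(n)$ cleanly. Because subtracting $3$ produces a borrow, it is easiest to route the computation through $n-1$ and invoke parts (2) and (3) of Lemma \ref{alpha.increment}, rather than attempt a direct application at $j=0$.
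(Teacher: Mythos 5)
Your proposal is correct and takes essentially the same route as the paper: decompose $B(n)$ according to the three types of Theorem \ref{APS-chiral}, count $\half A(n)$ for type (1) via Corollary \ref{f.g.correlation}, $D(n)=\sum_{j=1}^{k-1}D_j(n)$ for type (2) using (\ref{lute}), and $\epsilon A(n-3)=\epsilon A(n)2^{\binom{k}{2}-k}$ for type (3), then sum. The only difference is that you spell out the steps the paper leaves implicit (disjointness of the three types, the bijection reducing the type (3) count to $A(n-3)$, and the evaluation of $A(n-3)$ via Lemma \ref{alpha.increment}), all of which you do correctly.
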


The sum is understood to be $0$ if $k=1$.

\begin{proof} The number of partitions satisfying (1) in  Theorem \ref{APS-chiral} is $\half A(n)$, and the number satisfying (3) is
\beq
\epsilon A(n-3)= \epsilon A(n) 2^{\binom{k}{2}-k}. 
\eeq

So 
\beq
\begin{split}
B(n) &=\half A(n)+D(n)+ \epsilon A(n) 2^{\binom{k}{2}-k} \\
	&=\half A(n)+ \left(\sum_{j=1}^{k-1}D_j(n) \right)+ \epsilon A(n) 2^{\binom{k}{2}-k}, \\
	\end{split}
\eeq
and the final formula follows from (\ref{lute}).
\end{proof}

 \subsection{Case of Trivial $2$-core}

Consider the case of partitions of the form $\phi(\alpha,\beta) \vdash 2n$. (See Section \ref{tower.sec}.) Note that $\phi(\alpha,\beta)$ is odd iff $\alpha$ and $\beta$ are odd and $\summ(a,b)$ is neat.  Theorem \ref{APS-chiral} says:

\begin{prop}  The partition $\la=\phi(\alpha,\beta)$ is chiral iff both $\alpha$ and $\beta$ are odd, and one of the following holds:
\begin{enumerate}
\item $\bin(a) \cap \bin(b)=\emptyset$ and $\ord(n) \in \bin(a)$.  
\item $\bin(a) \cap \bin(b)= \{ j \}$, with $j=\ord(a)=\ord(b)$.  
\end{enumerate}
\end{prop}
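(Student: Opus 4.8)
The plan is to specialize Theorem~\ref{APS-chiral} to the case $\core_2\lambda=\emptyset$, which is forced here since $\lambda=\phi(\alpha,\beta)$ has trivial $2$-core and $2$-quotient $(\alpha,\beta)$. First I would observe that because $\core_2\lambda=\emptyset$, alternative (3) of the theorem (which demands $\core_2\lambda=(2,1)$) cannot occur; so $\lambda$ is chiral exactly when one of alternatives (1) and (2) holds, and the whole argument reduces to rewriting those two alternatives in the present normalization.

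Next I would record the one bit of arithmetic that the translation needs. Our $\lambda$ is a partition of $2n$, so the integer playing the role of ``$n$'' in Theorem~\ref{APS-chiral} is $2n$, which is even; hence its $\epsilon$ is $0$ and $k=\ord\bigl((2n)'\bigr)=\ord(2n)=\ord(n)+1$. Consequently the quantity $k-1$ appearing in alternative (1)(a) equals $\ord(n)$. This index shift is the only place a genuine (if small) computation is required, and it is the spot I would state most carefully.

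With this in hand the two surviving alternatives translate directly. For alternative (1): by the observation preceding the statement, $\phi(\alpha,\beta)$ is odd iff $\alpha$ and $\beta$ are odd and $\summ(a,b)$ is neat, i.e.\ $\bin(a)\cap\bin(b)=\emptyset$; and since $2n$ is even the extra requirement $k-1\in\bin(a)$ becomes $\ord(n)\in\bin(a)$ by the previous paragraph. This reproduces clause (1) of the proposition together with the oddness of $\alpha,\beta$. For alternative (2): $\core_2\lambda=\emptyset$ is among the permitted cores, and the remaining conditions are precisely that $\alpha,\beta$ are odd and $\bin(a)\cap\bin(b)=\{j\}$ with $j=\ord(a)=\ord(b)$, which is clause (2) together with the oddness of $\alpha,\beta$.

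Finally, since both surviving alternatives require $\alpha$ and $\beta$ to be odd, I would factor that out as a common hypothesis, leaving clauses (1) and (2) as the two (mutually exclusive) possibilities for $\bin(a)$ and $\bin(b)$, the intersection being empty in the first and a singleton in the second. This yields the proposition. The argument is essentially a dictionary between Theorem~\ref{APS-chiral} and the present setup, so I expect no real obstacle beyond the bookkeeping $k-1=\ord(n)$ and the verification that clause (3) is vacuous.
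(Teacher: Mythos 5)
Your proposal is correct and matches the paper's own (very terse) justification: the paper presents this proposition simply as the specialization of Theorem~\ref{APS-chiral} to $\lambda=\phi(\alpha,\beta)\vdash 2n$, using the observation that $\phi(\alpha,\beta)$ is odd iff $\alpha,\beta$ are odd and $\summ(a,b)$ is neat. Your added bookkeeping — that clause (3) is vacuous since $\core_2\lambda=\emptyset\neq(2,1)$, and that $k-1=\ord\bigl((2n)'\bigr)-1=\ord(n)$ — is exactly the translation the paper leaves implicit.
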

 
  \bigskip
 
The disjunction of the two conditions of the proposition is equivalent to a certain binomial coefficient being odd.
 
\begin{prop} Let $a+b=n$.  The quantity $\dbinom{n-1}{a-1,b}$ is odd iff one of the following holds:
 \begin{enumerate}
\item $\bin(a) \cap \bin(b)=\emptyset$ and $\ord(n) \in \bin(a)$.   
\item $\bin(a) \cap \bin(b)= \{ j \}$, with $j=\ord(a)=\ord(b)$. 
\end{enumerate}
\end{prop}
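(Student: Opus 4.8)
The plan is to reduce the statement to the classical parity criterion for binomial coefficients and then to a purely combinatorial analysis of binary digits. Since $(a-1)+b=n-1$, the quantity $\binom{n-1}{a-1,b}$ is a genuine binomial coefficient, and by the classical fact recalled in Section~\ref{bin.notation} it is odd exactly when $\summ(a-1,b)$ is neat, i.e.\ when $\bin(a-1)\cap\bin(b)=\emptyset$. So the whole problem becomes: describe $\bin(a-1)$ and determine when it is disjoint from $\bin(b)$.

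The first key step is the ``borrow'' identity for subtracting $1$ in binary. Writing $j=\ord(a)$ for the lowest set bit of $a$, I would record that
\[
\bin(a-1)=\bigl(\bin(a)\setminus\{j\}\bigr)\cup\{0,1,\ldots,j-1\}.
\]
For $a$ odd this reads $\bin(a-1)=\bin(a)\setminus\{0\}$, so the single formula handles both parities uniformly. Disjointness of $\bin(a-1)$ and $\bin(b)$ then splits into two requirements: first $(\bin(a)\setminus\{j\})\cap\bin(b)=\emptyset$, and second $\{0,\ldots,j-1\}\cap\bin(b)=\emptyset$, the latter being exactly the condition $\ord(b)\ge j$.

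The second step is a case split according to whether $j\in\bin(b)$. If $j\notin\bin(b)$, the two requirements together force $\bin(a)\cap\bin(b)=\emptyset$ and $\ord(b)>\ord(a)=j$; since $\summ(a,b)$ is then neat, $\bin(n)=\bin(a)\cup\bin(b)$ and hence $\ord(n)=\min(\ord(a),\ord(b))=j\in\bin(a)$, which is condition (1). If instead $j\in\bin(b)$, then $\ord(b)\ge j$ combined with $j\in\bin(b)$ gives $\ord(b)=j=\ord(a)$, and the first requirement forces the common bits of $a$ and $b$ to be exactly $\{j\}$, which is condition (2). Each of these steps is an equivalence, so running the implications in reverse yields the converse and no separate argument is needed; for condition (1) the reverse direction uses the observation that, under disjointness, $\ord(n)\in\bin(a)$ is the same as $\ord(n)=\ord(a)$, since $\ord(n)=\min(\ord(a),\ord(b))$ cannot lie in $\bin(a)\cap\bin(b)=\emptyset$.

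I do not expect a genuine obstacle here; the analysis is routine bookkeeping with binary digits. The only point requiring real care is the identification of $\ord(n)$ in the disjoint case, where one must invoke the neatness of $\summ(a,b)$ to pass from $\bin(n)=\bin(a)\cup\bin(b)$ to $\ord(n)=\min(\ord(a),\ord(b))$, and to conclude that $\ord(n)\in\bin(a)$ pins down $\ord(n)=\ord(a)=j$. I would also verify the degenerate case where $a$ is a power of $2$ (so $\bin(a)\setminus\{j\}=\emptyset$) to confirm the borrow identity behaves correctly there.
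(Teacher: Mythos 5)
Your proof is correct. The paper in fact leaves this proposition to the reader, so there is no printed proof to compare against; your argument---reducing oddness of $\binom{n-1}{a-1,b}$ to disjointness of $\bin(a-1)$ and $\bin(b)$ via the classical carry criterion, then applying the borrow identity $\bin(a-1)=(\bin(a)\setminus\{j\})\cup\{0,\ldots,j-1\}$ with $j=\ord(a)$ and splitting on whether $j\in\bin(b)$---is exactly the routine bookkeeping the authors intended, and each step is a genuine equivalence, with your observation that disjointness forces $\ord(n)=\min(\ord(a),\ord(b))$ correctly handling the converse for condition (1). The only loose end is the degenerate situation $a=0$ or $b=0$, where $\ord$ is undefined; these cases are checked directly in one line (for $b=0$ the coefficient is $1$ and condition (1) holds trivially, and for $a=0$ the coefficient vanishes under the usual convention), so they do not affect the argument.
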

 
The proof is left to the reader. $\square$
 
\begin{cor} \label{phi.chiral}  Let $(\alpha,\beta) \models n$ with $|\alpha|=a$ and $|\beta|=b$.  Then $\la=\phi(\alpha,\beta)$ is chiral iff the quantity
\beq
f_\alpha f_\beta \binom{n-1}{a-1,b}
\eeq
is odd.
\end{cor}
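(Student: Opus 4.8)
The plan is to chain together the two propositions immediately preceding this corollary, together with the definition of an odd partition. First I would recall that, by definition, the partition $\alpha$ is odd precisely when $f_\alpha$ is odd, and likewise $\beta$ is odd precisely when $f_\beta$ is odd. This translates the hypotheses ``$\alpha$ is odd'' and ``$\beta$ is odd'' directly into parity statements about the integers $f_\alpha$ and $f_\beta$.

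Next I would invoke the first of the two preceding propositions, which characterizes chirality of $\lambda = \phi(\alpha,\beta)$: it holds iff $\alpha$ and $\beta$ are both odd \emph{and} one of the two binary conditions (1), (2) is satisfied. The second preceding proposition asserts that those very same conditions (1), (2) are together equivalent to $\binom{n-1}{a-1,b}$ being odd. Substituting this equivalence into the characterization from the first proposition yields: $\lambda$ is chiral iff $f_\alpha$ is odd, $f_\beta$ is odd, and $\binom{n-1}{a-1,b}$ is odd, all three simultaneously. Finally, since a product of integers is odd iff every factor is odd, the conjunction of these three parity conditions is exactly the single assertion that $f_\alpha f_\beta \binom{n-1}{a-1,b}$ is odd, which is what the corollary claims.

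There is no serious obstacle here, as all of the arithmetic content has already been packaged into the two preceding propositions (especially the second, whose verification the text defers to the reader). The only point requiring care is bookkeeping: one must check that the disjunction ``(1) or (2)'' in the first proposition is literally the same pair of conditions — with the same $a$, $b$, $n$, and $j$ — as the disjunction in the second proposition, so that the substitution is valid with no re-indexing. Once that identification is made explicit, the equivalence of the three separate parity conditions with the single parity of the product completes the proof.
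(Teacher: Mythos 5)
Your proposal is correct and is exactly the argument the paper intends: the corollary is stated without proof precisely because it follows by chaining the two preceding propositions (chirality of $\phi(\alpha,\beta)$ iff $\alpha,\beta$ odd plus the binary conditions, and those conditions iff $\binom{n-1}{a-1,b}$ is odd) with the definition that $\alpha$ is odd iff $f_\alpha$ is odd, so the three parities combine into the oddness of the product. Your bookkeeping remark about the two disjunctions matching verbatim is accurate; they are stated identically in the paper, so the substitution goes through with no re-indexing.
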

 
 \subsection{Tower Merging}

Suppose $\la$ and $\mu$ are  partitions, with the property that for all $i \geq 0$, either the $i$th row of $T_\la$ is empty, or the $i$th row of $T_\mu$ is empty.
In this situation, one can define a partition $\nu$ of $|\la|+|\mu|$ as follows:  For all $i \geq 0$, if the $i$th row of $T_\la$ (resp., $T_\mu$) is nonempty, let the $i$th row of $T_\nu$ be equal to the $i$th row of $T_\la$ (resp. of $T_\mu$).
If the $i$th rows of both $T_\la$ and $T_\mu$ are empty, then let the $i$th row of $T_\nu$ be empty.  We will refer to this process as ``merging" the towers $T_\la$ and $T_\mu$.

For example,  $\la= (3,3,2)$, whose $2$-core tower is
\begin{displaymath}
  \xymatrix@C=0.7em{
    &&&&&&& \emptyset \ar@{-}[dllll]\ar@{-}[drrrr] &&&&&&&\\
    &&&\emptyset\ar@{-}[dll]\ar@{-}[drr] &&&&&&&& \emptyset \ar@{-}[dll]\ar@{-}[drr] &&&\\
    &\emptyset \ar@{-}[dl]\ar@{-}[dr]&&&& (1) \ar@{-}[dl]\ar@{-}[dr] &&&& (1) \ar@{-}[dl]\ar@{-}[dr]&&&& \emptyset \ar@{-}[dl]\ar@{-}[dr]&\\
    \emptyset &&\emptyset&&\emptyset&&\emptyset&&\emptyset&&\emptyset&&\emptyset &&\emptyset \\
  }
\end{displaymath}
 can be merged with $\mu=(3,1,1,1,1,1)$, whose tower is
 \begin{displaymath}
  \xymatrix@C=0.7em{
    &&&&&&& \emptyset \ar@{-}[dllll]\ar@{-}[drrrr] &&&&&&&\\
    &&&\emptyset\ar@{-}[dll]\ar@{-}[drr] &&&&&&&& \emptyset \ar@{-}[dll]\ar@{-}[drr] &&&\\
    &\emptyset \ar@{-}[dl]\ar@{-}[dr]&&&& \emptyset \ar@{-}[dl]\ar@{-}[dr] &&&& \emptyset \ar@{-}[dl]\ar@{-}[dr]&&&& \emptyset \ar@{-}[dl]\ar@{-}[dr]&\\
    \emptyset &&\emptyset&&(1)&&\emptyset&&\emptyset&&\emptyset&&\emptyset &&\emptyset \\
  }
\end{displaymath}
to yield  $\nu=(11,3,2)$, with tower
 \begin{displaymath}
  \xymatrix@C=0.7em{
    &&&&&&& \emptyset \ar@{-}[dllll]\ar@{-}[drrrr] &&&&&&&\\
    &&&\emptyset\ar@{-}[dll]\ar@{-}[drr] &&&&&&&& \emptyset \ar@{-}[dll]\ar@{-}[drr] &&&\\
    &\emptyset \ar@{-}[dl]\ar@{-}[dr]&&&& (1) \ar@{-}[dl]\ar@{-}[dr] &&&& (1) \ar@{-}[dl]\ar@{-}[dr]&&&& \emptyset \ar@{-}[dl]\ar@{-}[dr]&\\
    \emptyset &&\emptyset&&(1)&&\emptyset&&\emptyset&&\emptyset&&\emptyset &&\emptyset .\\
  }
\end{displaymath}

Write `$a+b \domeq n$' when $a+b=n$, and $\bin(a) \cap \bin(b)= \{ j \}$, with $j=\ord(a)=\ord(b)$.  Put $D_{<j}(n)=\sum_{i=1}^{j-1}D_i(n)$.
 
\begin{prop} \label{merge1} Let $a+b=n$ with $k=\ord(n)$.
\begin{enumerate}
\item If $a+b \doteq n$, then $A(n)=A(a)A(b)$.
\item If $a+b \doteq n$ with $k \in \ord(b)$, then $A(a)D(b)=D(n)$.
\item If $a+b \domeq n$ with $\ord(a)=\ord(b)=j$, then $A(a)D(b)=D_{<j}(n)$.
\end{enumerate}
\end{prop}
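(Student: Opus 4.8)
The plan is to establish part (1) by a bijection of $2$-core towers, and then to deduce parts (2) and (3) from part (1) together with the explicit formula (\ref{lute}) for $D_j$. For part (1), recall that $\lambda \in \Lambda^{\odd}_m$ exactly when $T_\lambda$ has at most one cell per row, its nonempty rows being indexed by $\bin(m)$. If $a+b \doteq n$, then $\bin(a)$ and $\bin(b)$ are disjoint, so for any $\lambda \in \Lambda^{\odd}_a$ and $\mu \in \Lambda^{\odd}_b$ the towers $T_\lambda$ and $T_\mu$ have disjoint row-supports and can be merged as in the tower-merging construction defined above. The merged tower has at most one cell per row and weight $a+b=n$, so it is the tower of an odd partition of $n$; conversely an odd $\nu \vdash n$ has nonempty rows indexed by $\bin(n)=\bin(a)\sqcup\bin(b)$, and sorting these rows by membership in $\bin(a)$ or $\bin(b)$ recovers the pair $(\lambda,\mu)$. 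This bijection $\Lambda^{\odd}_a\times\Lambda^{\odd}_b \to \Lambda^{\odd}_n$ gives $A(n)=A(a)A(b)$.

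For parts (2) and (3) I would argue index by index. By (\ref{lute}), $D_i(m)=2^{2(i-1)}A(m-2^{i+1})$ for each valid domino index $i$, so the per-index identity $A(a)\,A(b-2^{i+1})=A(n-2^{i+1})$ immediately gives $D_i(n)=A(a)D_i(b)$, and summing over $i$ yields the claim provided the summation ranges agree. By part (1), that per-index identity holds as soon as $a+(b-2^{i+1})\doteq n-2^{i+1}$, i.e. as soon as $\bin(a)\cap\bin(b-2^{i+1})=\emptyset$. In part (2), $a+b\doteq n$ with $\ord(b)=k=\ord(n)$ forces $\ord(a)>k$, so $\bin(a)\subseteq\{k+1,k+2,\dots\}$; for $1\le i\le k-1$ the subtraction $b-2^{i+1}$ (with $i+1\le k$) leaves the bits of $b$ above $k$ untouched and introduces new low bits only in the range $\{i+1,\dots,k-1\}$, all strictly below $\bin(a)$, so the sum is neat. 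Summing $D_i(n)=A(a)D_i(b)$ over $1\le i\le k-1$ gives $D(n)=A(a)D(b)$.

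Part (3) runs parallel: when $a+b\domeq n$ with $\ord(a)=\ord(b)=j$, the carry at position $j$ forces $\ord(n)>j$, the valid domino indices of $b$ are $1\le i\le j-1$, and for such $i$ the borrow in $b-2^{i+1}$ removes the shared bit $j$ and introduces only bits below $j$; since $\bin(a)\subseteq\{j,j+1,\dots\}$ and $\bin(a)\cap\bin(b)=\{j\}$, the set $\bin(b-2^{i+1})$ is disjoint from $\bin(a)$, and summing $D_i(n)=A(a)D_i(b)$ over $1\le i\le j-1$ produces $D_{<j}(n)=A(a)D(b)$. The main obstacle is precisely this binary bookkeeping: one must track the borrow cascade in $b-2^{i+1}$ to pin down $\bin(b-2^{i+1})$, verify it never collides with $\bin(a)$, and separately confirm that each $i$ in range remains a valid domino index for $n$, so that (\ref{lute}) applies on both sides and the summation limits match. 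Once disjointness and range-matching are in hand, the result is a formal consequence of part (1) and (\ref{lute}); alternatively, these two steps can be packaged as a single nested merging bijection that merges $T_\lambda$ with the odd body of the domino tower of $b$ while preserving the domino in row $i$.
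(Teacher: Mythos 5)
Your proof is correct, and it splits from the paper's midway. Part (1) is exactly the paper's argument: merging towers gives a bijection $\Lambda_a^{\odd}\times\Lambda_b^{\odd}\to\Lambda_n^{\odd}$. For parts (2) and (3), however, the paper never touches the formula (\ref{lute}); it simply runs the same merging construction again, merging an odd tower of $a$ with a domino tower of $b$ to obtain bijections $\Lambda_a^{\odd}\times\Lambda_b^D\to\Lambda_n^D$ in case (2) and $\Lambda_a^{\odd}\times\Lambda_b^D\to\bigcup_{i<j}\Lambda_n^i$ in case (3), whereas you reduce each per-index identity to part (1) applied to the neat sum $a+(b-2^{i+1})\doteq n-2^{i+1}$ and then invoke (\ref{lute}) on both sides. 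The two routes are close in substance: the borrow-cascade bookkeeping you carry out is precisely the disjointness-of-row-supports check that is hidden in the paper's one-line bijection, and your index-by-index identity $A(a)D_i(b)=D_i(n)$ is the cardinality shadow of the refined bijection $\Lambda_a^{\odd}\times\Lambda_b^i\to\Lambda_n^i$ implicit in the paper. What the paper's version buys is uniformity and a purely combinatorial proof independent of (\ref{lute}) — which matters to the authors, since they cite this proposition as the bijective counterpart of Lemma \ref{alpha.increment}(1); what yours buys is that injectivity and surjectivity of a map never need to be argued, being replaced by the explicit range matching ($1\le i\le k-1$ in case (2), $1\le i\le j-1$ against $D_{<j}(n)$ in case (3)) that you correctly flag as the main obstacle and then verify. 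Your closing remark about packaging everything as a nested merging bijection is, in fact, the paper's proof verbatim.
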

  
 \begin{proof} 
 In each case a bijective proof is given by the merging of towers:
 In the first case, merging gives a bijection from $\Lambda_a^{\odd} \times \Lambda_b^{\odd}$ to $\Lambda_n^{\odd}$.
 In the second case, it gives a bijection from $\Lambda_a^{\odd} \times \Lambda_b^D$ to $\Lambda_n^D$.
 In the last case, it gives  a bijection from $\Lambda_a^{\odd} \times \Lambda_b^D$ to $\bigcup_{i<j}  \Lambda_n^i$.
 \end{proof}

 \section{Type $B_n$: Hyperoctahedral Groups}
 
 The Coxeter group of type $B_n$, which we denote by $\mb B_n$, is the wreath product $\left(\Z/2\Z \right)\wr S_n$.  It is traditionally called the $n$th hyperoctahedral group, being the symmetries of the standard hyperoctahedron (or standard hypercube) in $\R^n$.  For $a+b=n$, there is a Young subgroup $\mb B_a \times \mb B_b \leq \mb B_n$ in the evident way.
 
The four multiplicative characters of $\mb B_n$ may be described as follows. Write $\eps: \left(\Z/2\Z\right)^n \to \{\pm 1\}$ for the character whose restriction to each factor $\Z/2\Z$ is nontrivial.  As it is $S_n$-invariant, it extends to a multiplicative character $\eps: \mb B_n \to \{\pm 1\}$.  Write $\sgn^0$ for the composition of the projection $\mb B_n \to S_n$ with the sign character of $S_n$.  Finally write $\sgn^1=\eps \cdot \sgn^0$.  The multiplicative characters of $\mb B_n$ are then $1, \eps, \sgn^0$, and $\sgn^1$.  

 \subsection{Irreducible Representations of $\mb B_n$}

Let $\la \vdash n$.  We consider two extensions of the representation $\rho_\lambda$ of $S_n$ to $\mb B_n$, namely
\begin{equation*}
\rho_{\lambda}^{0}(x;w)=\rho_{\lambda}(w) \text{ \ and \ } \rho_{\lambda}^{1}(x;w)=\eps(x)\rho_{\lambda}(w),
\end{equation*}
for $x\in\left(\Z/2\Z\right)^n\text{ and }w\in S_{n}$.   For $(\alpha,\beta)\models n$, define
\begin{equation*}
\rho_{\alpha\beta}=\text{Ind}_{\mathbb{B}_a\times \mathbb{B}_b}^{\mathbb{B}_n} \rho_{\alpha}^{0}\boxtimes\rho_{\beta}^{1}.
\end{equation*}
Then
\begin{equation*}
\Irr(\mb B_n)=\{\rho_{\alpha\beta}\mid(\alpha,\beta)\models n\}.
\end{equation*} 
Put
\begin{equation*}
f_{\alpha \beta}=\dim \rho_{\alpha \beta}=\binom{n}{a,b}f_{\alpha}f_{\beta}.
\end{equation*}

   For a multiplicative character $\omega$ of $\mb B_n$, we define
\beq
\Bip_\omega(n)=\{ (\alpha,\beta) \models n \mid \det \rho_{\alpha \beta}=\omega \},
\eeq
and $N_\omega(n)=|\Bip_\omega(n)|$.  Similarly, for $a+b=n$ we put 
\beq
\Bip_\omega(a,b)=\{ (\alpha,\beta) \models n \mid \alpha \vdash a, \beta \vdash b, \det \rho_{\alpha \beta}=\omega \},
 \eeq
 and $N_\omega(a,b)=|\Bip_\omega(a,b)|$.
 
 \subsection{The Determinant of $\rho_{\alpha\beta}$}
 
 Proposition 29.2 in \cite{Bushnell.Henniart} gives the following formula for the determinant of an induced representation:
 
 \begin{prop} Let $G$ be a finite group, $H \leq G$ a subgroup, and $\rho$ a representation of $H$.  If $\pi=\Ind_H^G \rho$, then
 \beq
 \det \pi=\det(\mb C[G/H])^{\dim \rho} \otimes (\det \rho \circ \ver_{G/H}).
 \eeq
 \end{prop}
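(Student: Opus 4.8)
The plan is to compute $\det\pi(g)$ directly from an explicit matrix form of $\pi$ and then recognize the two resulting factors as the asserted characters. First I would fix coset representatives $g_1,\ldots,g_m$ for $G/H$, where $m=[G:H]$, realizing $\pi=\Ind_H^G\rho$ on $\bigoplus_{i=1}^m g_i\otimes V$, with $V$ the space of $\rho$. For $g\in G$, left multiplication permutes the cosets: writing $g g_i=g_{\sigma_g(i)}h_i(g)$ with $\sigma_g$ a permutation of $\{1,\ldots,m\}$ and $h_i(g)=g_{\sigma_g(i)}^{-1}g g_i\in H$, the operator $\pi(g)$ sends $g_i\otimes v\mapsto g_{\sigma_g(i)}\otimes\rho(h_i(g))v$. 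Thus $\pi(g)$ is \emph{block-monomial}: in block form it has exactly one nonzero block per block-row and block-column, placed according to $\sigma_g$, and that block is $\rho(h_i(g))$.

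Next I would factor the determinant of this block-monomial operator. Writing $\pi(g)=P\cdot D$, where $P$ is the block-permutation operator $g_i\otimes w\mapsto g_{\sigma_g(i)}\otimes w$ (that is, $P=P_{\sigma_g}\otimes I_{\dim\rho}$) and $D$ is the block-diagonal operator $g_i\otimes v\mapsto g_i\otimes\rho(h_i(g))v$, the determinant splits as
\beq
\det\pi(g)=\det(P_{\sigma_g}\otimes I_{\dim\rho})\cdot\det D=\sgn(\sigma_g)^{\dim\rho}\cdot\prod_{i=1}^m\det\rho(h_i(g)).
\eeq
The exponent $\dim\rho$ is forced by the identity $\det(P_{\sigma_g}\otimes I_{\dim\rho})=\det(P_{\sigma_g})^{\dim\rho}=\sgn(\sigma_g)^{\dim\rho}$.

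It then remains to identify the two factors with the right-hand side of the statement. The permutation $\sigma_g$ is exactly the action of $g$ on the $G$-set $G/H$, so $\sgn(\sigma_g)$ is the value at $g$ of the determinant character of the permutation representation $\mathbb{C}[G/H]$; hence the first factor is $\det(\mathbb{C}[G/H])(g)^{\dim\rho}$. For the second factor, since $\det\rho\colon H\to\C^\times$ is abelian-valued it factors through $H_{\ab}$, so $\prod_i\det\rho(h_i(g))=\det\rho\bigl(\overline{\prod_i h_i(g)}\bigr)$, the product being taken in $H_{\ab}$, where ordering is immaterial. The map $g\mapsto\overline{\prod_i h_i(g)}$ is precisely the transfer (Verlagerung) homomorphism $\ver_{G/H}\colon G\to H_{\ab}$, so the second factor is $(\det\rho\circ\ver_{G/H})(g)$, and the tensor product of characters in the statement is just the product of their values.

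The main subtlety to handle carefully is the interpretation of $\prod_i h_i(g)$ in $H_{\ab}$: the individual factors $h_i(g)$ depend both on their ordering and on the choice of coset representatives, yet their image in $H_{\ab}$ does not — this is exactly the well-definedness of the transfer map. I would therefore either invoke the standard independence-of-representatives property of $\ver_{G/H}$, or observe that $\det\pi$ is manifestly independent of all choices, which forces the right-hand side to be as well and thereby confirms that the bracketed product must be read in the abelianization.
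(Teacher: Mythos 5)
Your proof is correct. One point of comparison to note: the paper does not prove this proposition at all --- it is imported verbatim as Proposition 29.2 of Bushnell--Henniart, so there is no in-paper argument to measure yours against. What you have written is the standard (and essentially the textbook) proof, and it is complete: the block-monomial factorization $\pi(g)=P_{\sigma_g}\otimes I_{\dim\rho}\cdot D$ correctly splits $\det\pi(g)$ into $\sgn(\sigma_g)^{\dim\rho}$, which is $\det(\C[G/H])(g)^{\dim\rho}$, and $\prod_i\det\rho(h_i(g))$, which is $(\det\rho\circ\ver_{G/H})(g)$. Your cocycles $h_i(g)=g_{\sigma_g(i)}^{-1}gg_i$ are exactly the elements $h_{x,g}$ appearing in the paper's own recollection of the definition of $\ver_{G/H}$ (with $t(x)=g_i$), so your identification of the second factor with the transfer requires no translation. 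Your handling of the well-definedness subtlety is also sound; in fact your second option --- that $\det\pi$ is manifestly independent of the choice of representatives, so the product read in $H_{\ab}$ must be as well --- is a nice self-contained way to sidestep citing the independence property of the transfer, though one should note it only shows independence of the composite $\det\rho\circ\ver_{G/H}$ for the particular $\rho$ at hand, which is all the proposition needs.
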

 
 Here $\ver_{G/H}: G_{\ab} \to H_{\ab}$ is the ``Verlagerung" map between the abelianizations of $G$ and $H$, and is the main part of our calculation.
 
 Recall the definition of $\ver_{G/H}$:  Let $t: G/H \to G$ be a section of the canonical projection.  Given $g \in G$, for each $x \in G/H$ we have $g t(x)=t(y) h_{x,g}$ for some $y \in G/H$ and $h_{x,g} \in H$.  Note that here $y={}^g x$, with $G/H$ considered as a $G$-set.
 Then 
 \beq
 \ver_{G/H}(g \mod D(G))=\prod_{x \in G/H}^m h_{x,g} \mod D(H).
 \eeq
 
Let us first treat the case where $G=S_n$ and $H$ is the Young subgroup $S_a \times S_b$, with $a+b=n$.
 Then $G_{\ab}=S_n/A_n$, where $A_n$ is the alternating group, and we may identify $H_{\ab}$ with $S_a/A_a \times S_b/A_b$.
 
\begin{prop} Let   $a+b=n$. Then the map 
 \beq
 \ver=\ver_{S_n/(S_a \times S_b)}: S_n/A_n \to S_a/A_a \times S_b/A_b
 \eeq
 is given by
 \beq
 \ver(\tau_n)=\left(\tau_a^{\binom{n-2}{a-2}}, \tau_b^{\binom{n-2}{b-2}} \right).
 \eeq
 Here $\tau_n$ is any transposition in $S_n$, or trivial if $n<2$.
 \end{prop}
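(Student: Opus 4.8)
The plan is to apply the standard \emph{orbit form} of the transfer map, which reorganizes the product defining $\ver$ by grouping the cosets $G/H$ into orbits of the cyclic group $\langle g \rangle$. Concretely, given $g \in G$, write $G/H = \bigsqcup_{\mathcal{O}} \mathcal{O}$ for the decomposition into $\langle g \rangle$-orbits; for each orbit $\mathcal{O}$ choose a representative $x_{\mathcal{O}}$ and let $\ell_{\mathcal{O}}$ be its length (the least positive integer with $g^{\ell_{\mathcal{O}}} x_{\mathcal{O}} = x_{\mathcal{O}}$ in $G/H$). A telescoping of the local factors $h_{x,g}=t({}^gx)^{-1} g\, t(x)$ along each orbit, valid since $H_{\ab}$ is abelian, gives
\beq
\ver(g \bmod D(G)) = \prod_{\mathcal{O}} t(x_{\mathcal{O}})^{-1} g^{\ell_{\mathcal{O}}} t(x_{\mathcal{O}}) \bmod D(H).
\eeq
I would then identify $G/H = S_n/(S_a \times S_b)$ with the set of $a$-element subsets $A \subseteq \{1, \ldots, n\}$ (with complement $B$), on which $S_n$ acts by $\sigma \cdot A = \sigma(A)$, and take the transversal $t(A)$ to be any $\sigma$ with $\sigma(\{1,\ldots,a\}) = A$.

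Next, taking $g = \tau_n = (i\,j)$ a transposition (so $\langle \tau_n \rangle$ has order $2$), I would analyze the orbits on $a$-subsets. An orbit has length $2$ exactly when $A$ contains precisely one of $i, j$; for such an orbit $\ell_{\mathcal{O}} = 2$ and $g^2 = 1$, so the contribution $t(x_{\mathcal{O}})^{-1} \tau_n^2 t(x_{\mathcal{O}}) = 1$ is trivial. The orbit is a fixed point exactly when $A$ contains both or neither of $i, j$; here $\ell_{\mathcal{O}} = 1$ and the contribution $t(A)^{-1} \tau_n t(A) = (\,t(A)^{-1}(i)\ \ t(A)^{-1}(j)\,)$ is a single transposition lying in the factor $S_a$ (when $i, j \in A$, since then both indices land in $\{1,\ldots,a\}$) or in $S_b$ (when $i, j \in B$). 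In either case its class is the nontrivial element $\tau_a$ of $S_a/A_a$ or $\tau_b$ of $S_b/A_b$.

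Finally I would count the fixed points. Those with $i, j \in A$ are determined by the remaining $a-2$ elements chosen from the $n-2$ indices other than $i,j$, giving $\binom{n-2}{a-2}$ of them; those with $i, j \in B$ number $\binom{n-2}{a} = \binom{n-2}{b-2}$, using $a + b = n$. Multiplying the corresponding classes and reducing modulo $D(H) = A_a \times A_b$ yields the asserted formula. I expect the only real subtlety to be the careful bookkeeping of the orbit formula --- confirming that the length-$2$ orbits contribute trivially, that each fixed-point contribution is exactly one transposition in the correct factor, and that the telescoped product matches the paper's definition of $\ver$ --- together with the identity $\binom{n-2}{a} = \binom{n-2}{b-2}$ linking the two fixed-point counts to the stated exponents. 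The degenerate cases $a < 2$ or $b < 2$ require no separate argument, since then the relevant binomial coefficient vanishes and the corresponding $\tau_a$ or $\tau_b$ is trivial.
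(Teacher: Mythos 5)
Your proposal is correct and follows essentially the same route as the paper: identify $S_n/(S_a\times S_b)$ with the $a$-element subsets of $\{1,\dots,n\}$, observe that cosets moved by the transposition contribute trivially (your orbit-form telescoping is exactly the paper's pairing cancellation $h_{x,s_1}h_{{}^{s_1}x,s_1}=1$), and count the fixed subsets containing both or neither of the two swapped indices, giving $\binom{n-2}{a-2}$ and $\binom{n-2}{b-2}$. The only differences are cosmetic: the paper works with the specific transposition $(12)$ and does not invoke the general orbit formula by name.
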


 \begin{proof} 
Let  $J_n=\{1,2,\ldots, n\}$, and write `$\wp_a(J_n)$' for the set of subsets of $J_n$ of cardinality $a$.  For instance $J_a \in \wp_a(J_n)$.  The map $S_n \to \wp_a(J_n)$ defined by $g \mapsto {}^g J_a$ descends to an isomorphism of $S_n/(S_a \times S_b)$ with $\wp_a(J_n)$, as $S_n$-sets. Pick any section $t:\wp_a(J_n) \to S_n$; thus ${}^{t(x)}J_a=x$ for all $x \in \wp_a(J_n)$.
 
 For concreteness, let $\tau_n$ be the transposition $s_1=(12)$. 
 Since
 \beq
 h_{x,s_1}=t({}^{s_1} x)^{-1} s_1 t(x),
 \eeq
 we have  $h_{x,s_1} h_{{}^{s_1}x,s_1}=1$.  Thus
 \beq
 \begin{split}
 \ver(\tau_n) &= \prod_{x \in \wp_a(J_n)} h_{x,s_1} \mod A_n\\
 			&= \prod_{x \in \wp_a(J_n) \mid {}^{s_1}x=x} h_{x,s_1} \mod A_n. \\
\end{split}
 \eeq
  
Now ${}^{s_1}x=x$ iff  $\{1,2\}$ is a subset of $x$, or of the complement of $x$.  In the first case, $h_{x,s_1}$ is a transposition $\tau_a \in S_a$, and in the second case, $h_{x,s_1}$ is a transposition $\tau_b \in S_b$.
There are $\binom{n-2}{a-2}$ elements of $\wp_a(J_n)$ containing $\{1,2\}$, and $\binom{n-2}{b-2}$ elements of $\wp_a(J_n)$ whose complement contains $\{1,2\}$.  The result follows.

 \end{proof}

Next we compute the verlagerung for $G=\mb B_n$ and $H$ the Young subgroup $\mb B_a \times \mb B_b$, with $a+b=n$. 
 The derived quotient $(\mb B_n)_{\ab}=\mb B_n/D(\mb B_n)$ is a Klein $4$ group generated by the images of $\tau_n$ and $e_n$, where $e_n \in (\Z/2 \Z)^n$ is any vector with $\eps(e_n)=-1$.
 We identify $H_{\ab}$ with $ (\mb B_a)_{\ab} \times (\mb B_b)_{\ab}$.
 
 \begin{prop} The map
 \beq
 \ver=\ver_{G/H}: (\mb B_n)_{\ab}\to (\mb B_a)_{\ab} \times (\mb B_b)_{\ab}
 \eeq
 is given by
 \beq
 \ver(\tau_n)= \left(\tau_a^{\binom{n-2}{a-2}}, \tau_b^{\binom{n-2}{b-2}} \right)
 \eeq
 and
 \beq
 \ver(e_n)= \left( e_a^{\binom{n-1}{a-1}}, e_b^{\binom{n-1}{b-1}} \right).
 \eeq
 \end{prop}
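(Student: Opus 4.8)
The plan is to reduce both formulas to coset combinatorics, exploiting the fact that the normal subgroup $(\Z/2\Z)^n$ is entirely contained in the Young subgroup $\mb B_a \times \mb B_b$. First I would record that this containment makes the coset space $\mb B_n/(\mb B_a \times \mb B_b)$ canonically $\mb B_n$-equivariantly isomorphic to $\wp_a(J_n)$, exactly as in the preceding proposition, with $(v;w)$ acting by $x \mapsto w(x)$; in particular the factor $(\Z/2\Z)^n$ acts trivially on cosets. I would then fix the section $t(x)=(0;\sigma_x)$ obtained by composing a section of $S_n/(S_a \times S_b)$ with the inclusion $S_n \hookrightarrow \mb B_n$.

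For $\ver(\tau_n)$, since $\tau_n=(0;s_1)$ lies in the $S_n$-factor and the section is valued in $S_n$, every little homomorphism $h_{x,\tau_n}=t({}^{s_1}x)^{-1}\tau_n\, t(x)=(0;\sigma_{s_1 x}^{-1}s_1\sigma_x)$ agrees, under $S_n \hookrightarrow \mb B_n$, with the one computed for $S_n$ in the previous proposition, and in particular lies in $S_a\times S_b$. Thus $\ver(\tau_n)$ is just the image of the $S_n$-verlagerung under the natural map $S_a/A_a \times S_b/A_b \to (\mb B_a)_{\ab}\times(\mb B_b)_{\ab}$, which sends $\tau_a$ to $\tau_a$ and $\tau_b$ to $\tau_b$. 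The claimed formula for $\ver(\tau_n)$ is then immediate from the earlier proposition.

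The substantive part is $\ver(e_n)$, and here I would take $e_n=(e^{(1)};1)$, a single standard basis vector. Because $(\Z/2\Z)^n$ acts trivially on cosets, we have ${}^{e_n}x=x$ for every $x$, so there is no telescoping cancellation as in the $\tau_n$ case: instead $\ver(e_n)=\prod_{x\in\wp_a(J_n)} h_{x,e_n}$ is a product over all cosets, with $h_{x,e_n}=t(x)^{-1}e_n\, t(x)$. A direct wreath-product computation using $w(e^{(j)})=e^{(w(j))}$ gives $h_{x,e_n}=(e^{(\sigma_x^{-1}(1))};1)$, a single basis vector whose support index is $\sigma_x^{-1}(1)$. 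Since $1\in x$ iff $\sigma_x^{-1}(1)\in\{1,\ldots,a\}$, this conjugate lands in the $\mb B_a$-factor precisely when $1\in x$, and in the $\mb B_b$-factor otherwise.

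To finish, I would pass to the abelianization: every single basis vector supported in the first block is $S_a$-conjugate to $e_a$, hence maps to $e_a$ in $(\mb B_a)_{\ab}$, and likewise in the second block. Therefore $\ver(e_n)=(e_a^{N_1},e_b^{N_2})$, where $N_1=\#\{x:1\in x\}=\binom{n-1}{a-1}$ and $N_2=\#\{x:1\notin x\}=\binom{n-1}{a}=\binom{n-1}{b-1}$, which is the asserted formula. The only real obstacle is the $e_n$ case---specifically noticing that $e_n$ fixes every coset, so that one sums over all of $\wp_a(J_n)$ rather than over the fixed points of an involution, and then tracking which Young factor each conjugate $t(x)^{-1}e_n\, t(x)$ occupies; everything else is bookkeeping and a routine binomial count.
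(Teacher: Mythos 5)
Your proof is correct and follows essentially the same route as the paper's: both use a transversal valued in $S_n$, deduce the $\ver(\tau_n)$ formula from the previous ($S_n$) proposition via the compatibility of the two verlagerung maps, and compute $\ver(e_n)$ by conjugating a single basis vector by the section elements, sorting the conjugates into the $\mb B_a$- or $\mb B_b$-factor according to whether $1 \in x$, and counting $\binom{n-1}{a-1}$ versus $\binom{n-1}{b-1}$ subsets. Your added observations (that $e_n$ fixes every coset so no cancellation occurs, and that all basis vectors in a block become equal in the abelianization) are exactly what the paper uses implicitly when it writes $h_{x,e_1}\equiv e_a$ or $e_b$ modulo $D(H)$.
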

 
\begin{proof}
We may identify the quotient $\mb B_n/(\mb B_a \times \mb B_b)$ with $S_n/(S_a \times S_b)$, and in particular use a transversal $S_n/(S_a \times S_b) \to S_n$ to form a transversal $t:\mb B_n/(\mb B_a \times \mb B_b) \to \mb B_n$ whose image lies in $S_n$.  The following diagram commutes:

 \[ \begin{tikzcd}
(S_n)_{\ab} \arrow{r}{\ver} \arrow[swap]{d}& (S_a)_{\ab} \times (S_b)_{\ab} \arrow{d} \\%
(\mb B_n)_{\ab} \arrow{r}{\ver}& (\mb B_a)_{\ab} \times (\mb B_b)_{\ab}
\end{tikzcd}
\] 
Here the vertical maps are the obvious inclusions, and the top horizontal map was computed in the previous proposition.  This gives the formula for $\ver(\tau_n)$.
  
Now, with notation as in the previous proof, observe that
\beq
{}^{t(x)^{-1}}x=J_a,
\eeq
so 
\beq
t(x)^{-1}(1) \in J_a \Leftrightarrow 1 \in x.
\eeq  
  We have
  \beq
  h_{x,e_1}=t(x)^{-1}e_1 t(x)=e_i,
  \eeq
  with $i=t(x)^{-1}(1)$.  Therefore, modulo $D(H)$, we may write 
  \beq
  h_{x,e_1} = \begin{cases} e_a, \text{ if $1 \in x$} \\
  e_b, \text{ if $1 \notin x$.}
  \end{cases}
  \eeq
 There are $\binom{n-1}{a-1}$ subsets $x$ of $J_n$ with $1 \in x$, and $\binom{n-1}{b-1}$ with $1 \notin x$, thus the product of the $h_{x,e_1}$ is $\left( e_a^{\binom{n-1}{a-1}}, e_b^{\binom{n-1}{b-1}} \right)$, giving the formula for $\ver(e_n)$.

\end{proof}

 For $i,j$ we have
 \beq
 \det(\rho_\alpha^0(\tau_a^i) \boxtimes \rho_\beta^1(\tau_b^j))=(-1)^{g_\alpha f_\beta i+ g_\beta f_\alpha j},
 \eeq
 and so
 \beq
 \det(\rho_\alpha^0 \boxtimes \rho_\beta^1)(\ver(s_1))=(-1)^{g_\alpha f_\beta \binom{n-2}{a-2}+ g_\beta f_\alpha \binom{n-2}{b-2}}.
 \eeq
 Similarly,
  \beq
 \det(\rho_\alpha^0(e_a^i) \boxtimes \rho_\beta^1(e_b^j))=(-1)^{f_\alpha f_\beta j},
 \eeq
 so
  \beq
 \det(\rho_\alpha^0 \boxtimes \rho_\beta^1)(\ver(e_n))=(-1)^{f_\alpha f_\beta  \binom{n-1}{b-1}}.
 \eeq
 
 The permutation module $\C[\wp_a(J_n)]$ coming from the action of $\mb B_n$ on $\wp_a(J_n)$ factors through the action of $S_n$.  Thus $e_n$ acts trivially, and $s_1$ acts by permuting these sets.  The number of doubleton orbits of $s_1$ on this set is equal to the number of subsets of $\wp_a(J_n)$ which contain $1$ but not $2$, thus equals
 $\binom{n-2}{a-1}$.  This gives
 \beq
 \det(\C[G/H])(e_n)=1 \text{  and } \det(\C[G/H])(s_1)=(-1)^{\binom{n-2}{a-1}},
 \eeq
so that
\beq
\det(\C[G/H])=(\sgn^0)^{\binom{n-2}{a-1,b-1}}.
\eeq

Let 
\beq
x_{\alpha \beta}=f_\alpha f_\beta \binom{n-1}{a,b-1} \in \Z/2 \Z
\eeq
and
\beq
y_{\alpha \beta}=f_\alpha f_\beta \binom{n-2}{a-1,b-1}+ f_\beta g_\alpha \binom{n-2}{a-2,b}+ f_\alpha g_\beta \binom{n-2}{a,b-2} \in \Z/2 \Z.
\eeq

From the above we deduce:

\begin{thm} \label{hyper.Solomon}
 For a bipartition $(\alpha,\beta)$, we have
 \beq
 \det \rho_{\alpha \beta}=\eps^{x_{\alpha \beta}} \cdot (\sgn^0)^{y_{\alpha \beta}}.
 \eeq
 \end{thm}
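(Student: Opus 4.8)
The plan is to combine the ingredients assembled above and test the asserted identity on the two generators of the abelianization. Since $\det \rho_{\alpha\beta}$ is a multiplicative character of $\mb B_n$, it factors through $(\mb B_n)_{\ab}$, which is the Klein four-group generated by the images of $\tau_n = s_1$ and $e_n$. Hence it suffices to check that $\det \rho_{\alpha\beta}$ and $\eps^{x_{\alpha\beta}} (\sgn^0)^{y_{\alpha\beta}}$ take the same value at $s_1$ and at $e_n$.

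First I would apply the Bushnell--Henniart formula to $\pi = \rho_{\alpha\beta} = \Ind_{\mb B_a \times \mb B_b}^{\mb B_n}(\rho_\alpha^0 \boxtimes \rho_\beta^1)$, noting $\dim(\rho_\alpha^0 \boxtimes \rho_\beta^1) = f_\alpha f_\beta$, to obtain
\[
\det \rho_{\alpha\beta} = \det(\C[G/H])^{f_\alpha f_\beta} \cdot \big(\det(\rho_\alpha^0 \boxtimes \rho_\beta^1) \circ \ver\big).
\]
Evaluating at $e_n$, the permutation-module factor is trivial since $\det(\C[G/H])(e_n) = 1$, so only the verlagerung term survives and the computation above gives $\det \rho_{\alpha\beta}(e_n) = (-1)^{f_\alpha f_\beta \binom{n-1}{b-1}}$. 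Rewriting $\binom{n-1}{b-1} = \binom{n-1}{a,b-1}$ identifies the exponent with $x_{\alpha\beta}$. Evaluating at $s_1 = \tau_n$, the permutation factor contributes $\det(\C[G/H])(s_1)^{f_\alpha f_\beta} = (-1)^{f_\alpha f_\beta \binom{n-2}{a-1,b-1}}$ and the verlagerung term contributes $(-1)^{g_\alpha f_\beta \binom{n-2}{a-2} + g_\beta f_\alpha \binom{n-2}{b-2}}$; rewriting $\binom{n-2}{a-2} = \binom{n-2}{a-2,b}$ and $\binom{n-2}{b-2} = \binom{n-2}{a,b-2}$ and adding the exponents recovers exactly $y_{\alpha\beta}$, so $\det \rho_{\alpha\beta}(s_1) = (-1)^{y_{\alpha\beta}}$.

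To finish, I would compare with the right-hand side using $\eps(e_n) = -1$, $\eps(s_1) = 1$ (as $\eps$ is trivial on the $S_n$-part), together with $\sgn^0(e_n) = 1$, $\sgn^0(s_1) = -1$ (as $\sgn^0$ factors through $\mb B_n \to S_n$, killing $e_n$ and sending $s_1$ to the transposition $(12)$). These yield $(\eps^{x_{\alpha\beta}}(\sgn^0)^{y_{\alpha\beta}})(e_n) = (-1)^{x_{\alpha\beta}}$ and $(\eps^{x_{\alpha\beta}}(\sgn^0)^{y_{\alpha\beta}})(s_1) = (-1)^{y_{\alpha\beta}}$, matching the values just computed. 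Two multiplicative characters that agree on a set generating $(\mb B_n)_{\ab}$ coincide, completing the proof. There is no genuine obstacle here---the theorem is an assembly of the verlagerung and permutation-module determinants already derived---so the only point demanding care is the triple of multinomial rewrites of the binomial coefficients together with the correct reading of $\eps$ and $\sgn^0$ on the two generators.
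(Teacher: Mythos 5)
Your proposal is correct and takes essentially the same approach as the paper: both apply the Bushnell--Henniart formula for the determinant of an induced representation, use the already-computed verlagerung and permutation-module determinants, and verify the claimed identity on the generators $s_1$ and $e_n$ of $(\mathbb{B}_n)_{\mathrm{ab}}$. The paper's own proof compresses this into the single line ``the preceding shows that $\det(\rho_{\alpha\beta}(e_n))=(-1)^{x_{\alpha\beta}}$ and $\det(\rho_{\alpha\beta}(s_1))=(-1)^{y_{\alpha\beta}}$,'' which is exactly the evaluation-at-generators argument you spell out in detail.
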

 
 \begin{proof} The preceding shows that
 \beq
 \det (\rho_{\alpha \beta}(e_n))=(-1)^{x_{\alpha \beta}} \quad \text{and} \quad  \det (\rho_{\alpha \beta}(s_1))=(-1)^{y_{\alpha \beta}},
 \eeq
 and this gives the theorem.
 \end{proof}

An alternate proof of Theorem \ref{hyper.Solomon} can be given through Theorem \ref{solomon.principle2}, via the Frobenius Character Formula to compute $\chi_{\alpha\beta}(s_1)$ and $\chi_{\alpha\beta}(e_1)$.  (Here $\chi_{\alpha\beta}$ denotes the character of $\rho_{\alpha \beta}$.) In fact,
\begin{equation} \label{charles}
\chi_{\alpha\beta}(s_1)={{n-2}\choose{a-2, b}}f_{\beta}\chi_{\alpha}(s_1)+{{n-2}\choose{a, b-2}}f_{\alpha}\chi_{\beta}(s_1)
\end{equation}
and
\beq
\chi_{\alpha\beta}(e_1)=f_{\alpha}f_{\beta}\left(2{{n-1}\choose{a-1, b}}-{{n}\choose{a, b}}\right).
\eeq

Details are left to the reader.

\bigskip
 
 Computing the determinant of $\rho_{\alpha \beta}$ thus reduces to determining the parities of $x_{\alpha \beta}$ and $y_{\alpha \beta}$.
  \begin{thm} \label{x.thm} For $(\alpha,\beta) \models n$, the quantity  $x_{\alpha \beta}$ is odd iff $\la=\phi(\beta,\alpha)$ is chiral.  In other words,
$ g_{\phi(\beta,\alpha)} \equiv x_{\alpha\beta} \mod 2$.
  \end{thm}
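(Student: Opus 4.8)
The plan is to reduce this statement directly to Corollary \ref{phi.chiral} by swapping the roles of $\alpha$ and $\beta$ and then invoking the symmetry of the multinomial coefficient. First I would recall that $\la = \phi(\beta,\alpha)$ is, by construction, the partition of $2n$ with trivial $2$-core whose $2$-quotient is the bipartition $(\beta,\alpha)$; here the first component $\beta$ has size $b$ and the second component $\alpha$ has size $a$, with $b+a=n$.

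Next I would apply Corollary \ref{phi.chiral} to the bipartition $(\beta,\alpha)$ in place of $(\alpha,\beta)$. Reading that corollary with the first partition taken to be $\beta$ (of size $b$) and the second to be $\alpha$ (of size $a$), it asserts that $\phi(\beta,\alpha)$ is chiral precisely when
\beq
f_\beta f_\alpha \binom{n-1}{b-1,a}
\eeq
is odd.

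The remaining step is a purely formal identification. Since multiplication of the integers $f_\alpha, f_\beta$ is commutative and the multinomial coefficient is symmetric in its lower entries, namely $\binom{n-1}{b-1,a}=\binom{n-1}{a,b-1}$ (both equal $\frac{(n-1)!}{a!\,(b-1)!}$, consistent with $(b-1)+a=n-1$), the displayed quantity coincides exactly with $x_{\alpha\beta}$. Hence $\phi(\beta,\alpha)$ is chiral iff $x_{\alpha\beta}$ is odd. Finally, recalling from Proposition \ref{solomon.principle} (as noted just before Theorem \ref{APS-chiral}) that a partition is chiral iff its $g$-value is odd, this is equivalent to the congruence $g_{\phi(\beta,\alpha)} \equiv x_{\alpha\beta} \mod 2$. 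I do not expect any genuine obstacle here: the only point demanding care is correctly tracking the swap of $\alpha$ and $\beta$ inside the argument of $\phi$, and confirming that the multinomial coefficient occurring in $x_{\alpha\beta}$ is precisely the one produced by Corollary \ref{phi.chiral} after that swap.
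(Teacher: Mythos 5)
Your proof is correct and takes exactly the route the paper does: the paper's own proof simply declares the theorem immediate from Corollary \ref{phi.chiral}, and your write-up spells out the two details (applying the corollary to the swapped bipartition $(\beta,\alpha)$ and using the symmetry $\binom{n-1}{b-1,a}=\binom{n-1}{a,b-1}$) that make it immediate. Nothing further is needed.
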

\begin{proof} This is immediate from Corollary \ref{phi.chiral}.
\end{proof}

\begin{cor} \label{x.cor} For all $n$,
 \beq
 N_\eps(n)+N_{\sgn^1}(n)=B(2n).
 \eeq
 \end{cor}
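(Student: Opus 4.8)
The plan is to reinterpret the left-hand side through the determinant formula and then transport the resulting count to partitions of $2n$ via the map $\phi$. By Theorem \ref{hyper.Solomon} we have $\det \rho_{\alpha\beta}=\eps^{x_{\alpha\beta}}(\sgn^0)^{y_{\alpha\beta}}$, so $\det \rho_{\alpha\beta}\in\{\eps,\sgn^1\}$ exactly when $x_{\alpha\beta}$ is odd, the parity of $y_{\alpha\beta}$ deciding between $\eps$ and $\sgn^1$ but not affecting membership in this pair. Hence $N_\eps(n)+N_{\sgn^1}(n)=\#\{(\alpha,\beta)\models n:x_{\alpha\beta}\equiv 1\}$. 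By Theorem \ref{x.thm}, $x_{\alpha\beta}$ is odd iff $\phi(\beta,\alpha)$ is chiral, so this equals the number of bipartitions $(\alpha,\beta)\models n$ with $\phi(\beta,\alpha)$ chiral. Since $(\alpha,\beta)\mapsto\phi(\beta,\alpha)$ is a bijection from $\Bip(n)$ onto the set of partitions of $2n$ with trivial $2$-core, I would rewrite the left side as the number of chiral partitions of $2n$ that have trivial $2$-core.

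It then remains to show that \emph{every} chiral partition of $2n$ automatically has trivial $2$-core; granting this, the two sets coincide and the identity follows by definition of $B(2n)$. I would prove this by combining a parity constraint with the classification in Theorem \ref{chiral}. The constraint is that for $\la\vdash 2n$ the quantity $|\core_2\la|=2n-2(|\alpha|+|\beta|)$ is even, while the only possible $2$-core sizes are $0,1,3,6,\dots$; in particular $|(1)|=1$ and $|(2,1)|=3$ are odd. I would then run through the three branches of Theorem \ref{chiral}: in case (3) the $2$-core is $(2,1)$ of odd size, impossible for $\la\vdash 2n$; in case (2) the $2$-core lies in $\{\emptyset,(1)\}$, and evenness forces $\emptyset$; in case (1) $\la$ is odd, whence $w_0(\la)=|\core_2\la|\le 1$ by Macdonald's theorem (the $0$th row of $T_\la$ being the single node $\core_2\la$), and evenness again forces $\core_2\la=\emptyset$.

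The main obstacle is precisely this last step: checking across all three cases of Theorem \ref{chiral} that the $2$-core of a chiral partition of an even integer must vanish. Once that is established, the chain
\[
N_\eps(n)+N_{\sgn^1}(n)=\#\{(\alpha,\beta)\models n:\phi(\beta,\alpha)\text{ chiral}\}=\#\{\la\vdash 2n:\core_2\la=\emptyset,\ \la\text{ chiral}\}=B(2n)
\]
completes the argument, the final equality being exactly the observation that all chiral partitions of $2n$ have trivial $2$-core and hence arise as $\phi(\beta,\alpha)$ for a unique $(\alpha,\beta)\models n$.
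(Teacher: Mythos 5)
Your proposal is correct and is essentially the paper's own (implicit) argument for this corollary: Theorem \ref{hyper.Solomon} identifies $N_\eps(n)+N_{\sgn^1}(n)$ with the number of bipartitions having $x_{\alpha\beta}$ odd, Theorem \ref{x.thm} converts this via the bijection $(\alpha,\beta)\mapsto\phi(\beta,\alpha)$ into counting chiral partitions of $2n$ with trivial $2$-core, and the parity constraint $|\core_2\la|\equiv|\la| \pmod 2$ applied to the three cases of Theorem \ref{chiral} shows that every chiral partition of $2n$ has trivial $2$-core, so the count is exactly $B(2n)$. Your filling-in of that last step (including $w_0(\la)=|\core_2\la|\le 1$ in the Macdonald case) is precisely the justification the paper leaves to the reader.
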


An important case is when $\alpha=\beta$.
 
  \begin{prop} \label{hookss} Let $a \geq 2$ and $\alpha \vdash a$.  Then 
 \beq
 \det \rho_{\alpha \alpha} = \begin{cases} \eps, \text{ if $a$ is a power of $2$ and $\alpha$ is a hook} \\
 1, \text{ otherwise.} 
 \end{cases}
 \eeq
 \end{prop}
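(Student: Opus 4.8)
The plan is to specialize Theorem~\ref{hyper.Solomon} to the diagonal case $\beta=\alpha$, so that $b=a$ and $n=2a$, and then read off
\[
\det\rho_{\alpha\alpha}=\eps^{x_{\alpha\alpha}}\cdot(\sgn^0)^{y_{\alpha\alpha}}
\]
by determining the parities of $x_{\alpha\alpha}$ and $y_{\alpha\alpha}$ separately. Throughout I would use that modulo $2$ one has $f_\alpha^2\equiv f_\alpha$, together with the classical fact recalled in Section~\ref{bin.notation}: a coefficient $\binom{m}{c,d}$ with $c+d=m$ is odd exactly when $\summ(c,d)$ is neat, i.e.\ when $\bin(c)\cap\bin(d)=\emptyset$.

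First I would dispose of the $\sgn^0$-part. Setting $\alpha=\beta$ and $a=b$ in the defining formula for $y_{\alpha\beta}$ gives
\[
y_{\alpha\alpha}=f_\alpha^2\binom{2a-2}{a-1,a-1}+f_\alpha g_\alpha\binom{2a-2}{a-2,a}+f_\alpha g_\alpha\binom{2a-2}{a,a-2}.
\]
The last two terms coincide (the multinomials are symmetric), so they cancel modulo $2$. The surviving term carries the central coefficient $\binom{2(a-1)}{a-1,a-1}$, which is even for every $a\geq 2$ because $\bin(a-1)$ is nonempty and so cannot be disjoint from itself. Hence $y_{\alpha\alpha}\equiv 0$, and $\det\rho_{\alpha\alpha}=\eps^{x_{\alpha\alpha}}$.

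It then remains to decide when $x_{\alpha\alpha}=f_\alpha^2\binom{2a-1}{a,a-1}$ is odd; since $f_\alpha^2\equiv f_\alpha$, this happens precisely when both $f_\alpha$ and $\binom{2a-1}{a,a-1}$ are odd. The coefficient is odd iff $\bin(a)\cap\bin(a-1)=\emptyset$. Writing $j=\ord(a)$, subtracting $1$ clears bit $j$, sets bits $0,\dots,j-1$, and leaves all higher bits of $a$ unchanged; thus the two binary expansions are disjoint exactly when $a$ carries no bit above position $j$, that is, exactly when $a=2^j$ is a power of $2$. So $x_{\alpha\alpha}$ is odd iff $f_\alpha$ is odd and $a$ is a power of $2$. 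Finally, when $a$ is a power of $2$ the odd partitions of $a$ are precisely the hooks (as recorded after Macdonald's theorem), which converts the condition ``$f_\alpha$ odd'' into ``$\alpha$ is a hook'' and yields the claimed dichotomy.

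The computation is short, so its only real content---and the step I would check most carefully---is the binary bookkeeping identifying $\bin(a)\cap\bin(a-1)=\emptyset$ with $a$ being a power of $2$; everything else is either the clean cancellation of the two $g_\alpha$-terms or a direct appeal to Theorem~\ref{hyper.Solomon}.
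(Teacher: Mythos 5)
Your proof is correct and follows essentially the same route as the paper's: specialize $x_{\alpha\beta}$ and $y_{\alpha\beta}$ to the diagonal, observe $y_{\alpha\alpha}\equiv 0$ (the two $g_\alpha$-terms cancel and the central coefficient $\binom{2a-2}{a-1,a-1}$ is even), and reduce the $\eps$-part to the oddness of $f_\alpha\binom{2a-1}{a,a-1}$, which holds iff $a$ is a power of $2$ and $\alpha$ is a hook. The only difference is that you spell out the binary bookkeeping that the paper leaves implicit.
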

 
 \begin{proof} 
 In this case 
 \beq
 x_{\alpha \alpha} \equiv f_\alpha \binom{2a-1}{a,a-1}, \text{ and } y_{\alpha \alpha} \equiv 0,
 \eeq
 since $\binom{2a-2}{a-1,a-1}$ is always even.  Now $\bin(a) \cap \bin(a-1) = \emptyset$ iff $a$ is a power of $2$, and in this case the odd partitions are precisely the hooks.
 This gives the proposition.
 \end{proof}

\subsection{Values of $N_\omega(n)$ for small $n$}

Using Theorem \ref{hyper.Solomon}, one can compute:

\bigskip
\begin{center}
 
 \begin{tabular}{|c|c|c|c|c|} 
 \hline
 $n$ & $N_{1}(n)$ & $N_{\sgn^0}(n)$ & $N_{\sgn^1}(n)$ & $N_{\eps}(n)$\\ [0.5ex] 
 \hline\hline
 $1$ & $1$ & $0$ & $0$ & $1$\\
 \hline
 $2$ & $1$ & $1$ & $2$ & $1$\\
 \hline
 $3$ & $2$ & $4$ & $2$ & $2$\\
 \hline
  $4$     & $4$ & $4$ & $8$ & $4$\\
 \hline
 $5$ & $8$ & $20$ & $4$ & $4$\\
 \hline
 $6$ & $33$ & $8$ & $16$ & $8$\\
 \hline 
 $7$ & $46$ & $32$ & $16$ & $16$\\
 \hline
$8$& $69$ & $28$ & $60$ & $28$\\
 \hline
 $9$ & $116$ & $168$ & $8$ & $8$\\
 \hline
$10$     & $417$ & $16$ & $32$ & $16$\\
 \hline
    $11$ & $624$ & $64$ & $32$ & $32$\\
 \hline
 $12$     & $909$ & $64$ & $128$ & $64$\\
 \hline
$13 $    & $1322$ & $320$ & $64$ & $64$\\
 \hline
 $14$ & $2153$ & $128$ & $256$ & $128$\\
 \hline
    $15$ & $2932$ & $512$ & $256$ & $256$\\
  \hline
  $16$ & $4038$ & $424$ & $936$ & $424$\\
 \hline
 \end{tabular}
\end{center}

\bigskip

Below is a logplot, base $2$, of each $N_\omega(n)$ for $2 \leq n \leq 65$.  The horizontal axis is $n$.  The orange line is $N_1(n)$, the green line is $N_{\sgn^0}(n)$, the red line is $N_{\sgn^1}(n)$, and the blue line is $N_{\eps}(n)$.

  \begin{center} \includegraphics[scale=0.7]{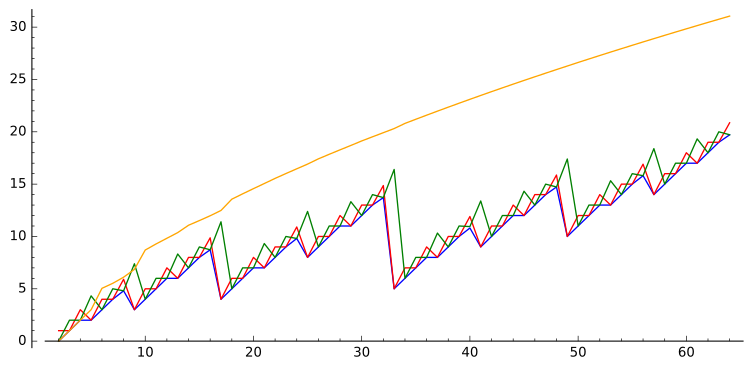}
   \end{center}

  Note the compatibility with Theorem \ref{inequality}.   
   
 \subsection{An Involution of $\Bip(n)$}

\begin{lemma} Mod $2$, we have $g_{\la'}=g_\la+f_\la$, and $f_{\phi(\alpha,\beta)}=f_{\alpha\beta}$.
\end{lemma}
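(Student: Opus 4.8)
The plan is to treat the two congruences separately, as each follows quickly from material already set up in the excerpt. The only inputs I need beyond the definitions are the standard relation between the Specht modules of a partition and its conjugate, and the characterization of when $\phi(\alpha,\beta)$ is odd.

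For the first identity, I would start from the well-known isomorphism $\rho_{\la'} \cong \rho_\la \otimes \sgn$, which immediately gives $f_{\la'} = f_\la$ (tensoring with a one-dimensional character does not change dimension) and $\chi_{\la'}(w) = \sgn(w)\,\chi_\la(w)$ for every $w \in S_n$. Since $s_1 = (12)$ is a transposition, $\sgn(s_1) = -1$, so $\chi_{\la'}(s_1) = -\chi_\la(s_1)$. Substituting into the definition $g_\mu = \tfrac{f_\mu - \chi_\mu(s_1)}{2}$ then yields
\[
g_{\la'} = \frac{f_{\la'} - \chi_{\la'}(s_1)}{2} = \frac{f_\la + \chi_\la(s_1)}{2} = f_\la - \frac{f_\la - \chi_\la(s_1)}{2} = f_\la - g_\la .
\]
Hence $g_{\la'} + g_\la = f_\la$, and reducing mod $2$ (so that $-g_\la \equiv g_\la$) gives exactly $g_{\la'} \equiv g_\la + f_\la$.

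For the second identity, I would match the parity conditions on the two sides. Writing $a = |\alpha|$, $b = |\beta|$ and $n = a+b$, the dimension formula $f_{\alpha\beta} = \binom{n}{a,b} f_\alpha f_\beta$ shows that $f_{\alpha\beta}$ is odd precisely when $\binom{n}{a,b}$, $f_\alpha$, and $f_\beta$ are all odd, i.e.\ when $\summ(a,b)$ is neat and both $\alpha$ and $\beta$ are odd. On the other side, $f_{\phi(\alpha,\beta)}$ is odd exactly when $\phi(\alpha,\beta)$ is an odd partition; by the observation preceding Corollary \ref{phi.chiral}, this holds iff $\alpha$ and $\beta$ are odd and $\summ(a,b)$ is neat. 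The two conditions coincide, so $f_{\phi(\alpha,\beta)} \equiv f_{\alpha\beta} \pmod 2$. If a self-contained argument is preferred over citing that observation, I would instead recall that $T_{\phi(\alpha,\beta)}$ is the merge of $T_\alpha$ and $T_\beta$ under an empty root, so its $(i+1)$st row has $w_i(\alpha) + w_i(\beta)$ cells; Macdonald's criterion (at most one cell per row) then forces $\alpha,\beta$ odd with disjoint nontrivial rows, which for odd partitions means $\bin(a) \cap \bin(b) = \emptyset$, i.e.\ $\summ(a,b)$ neat.

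Neither part presents a genuine obstacle — the lemma is essentially bookkeeping that packages the conjugation relation and the $\phi$-oddness criterion into the form needed later. The one point to be careful about is keeping the parity conditions aligned exactly in the second part, namely that ``neatness'' of $\summ(a,b)$ corresponds on the one hand to $\binom{n}{a,b}$ being odd and on the other to the nontrivial rows of the two towers being disjoint.
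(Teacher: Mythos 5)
Your proof is correct and follows essentially the same route as the paper: the first identity via $\chi_{\la'}(s_1)=-\chi_\la(s_1)$ (so $g_\la+g_{\la'}=f_\la$), and the second by matching the parity of $f_{\alpha\beta}=\binom{n}{a,b}f_\alpha f_\beta$ against the criterion that $\phi(\alpha,\beta)$ is odd iff $\alpha,\beta$ are odd and $\summ(a,b)$ is neat. The paper simply calls the second part ``immediate,'' while you spell out the underlying tower-merging argument, but the content is the same.
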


\begin{proof} For the first part, $\chi_{\la'}(s_1)=-\chi_{\la}(s_1)$, so $g_\la+g_{\la'}=f_\la$.  The second part is immediate.
\end{proof}

Given $(\alpha,\beta) \models n$, put $\sigma(\alpha,\beta)=(\alpha',\beta')$.  

\begin{prop} \label{invo.cong} For $(\alpha,\beta) \models n$, we have
\begin{enumerate}
\item $x_{\sigma(\alpha,\beta)}=x_{\alpha\beta}$,
\item $y_{\sigma(\alpha,\beta)} = y_{\alpha\beta}+ f_{\alpha\beta}$.
\end{enumerate}
\end{prop}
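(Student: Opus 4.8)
The plan is to prove each congruence by substituting the definitions of $x_{\alpha\beta}$ and $y_{\alpha\beta}$ from Section 5.2 and invoking the two identities of the preceding lemma, namely $g_{\lambda'} \equiv g_\lambda + f_\lambda \pmod 2$ and $f_{\lambda'} = f_\lambda$ (the latter being standard: conjugate partitions have equal degree Specht modules). Throughout, all binomial coefficients are unchanged under $\sigma$, since $\sigma$ fixes $a = |\alpha| = |\alpha'|$ and $b = |\beta| = |\beta'|$, so only the $f$'s and $g$'s can change. This reduces both parts to tracking how the products $f_\alpha f_\beta$, $f_\beta g_\alpha$, and $f_\alpha g_\beta$ transform when $(\alpha,\beta)$ is replaced by $(\alpha',\beta')$.

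First I would handle part (1). Since $x_{\alpha\beta} = f_\alpha f_\beta \binom{n-1}{a,b-1}$ and conjugation preserves degrees, $f_{\alpha'} = f_\alpha$ and $f_{\beta'} = f_\beta$, so $f_{\alpha'} f_{\beta'} = f_\alpha f_\beta$ exactly (not merely mod $2$), and the binomial factor is untouched. Hence $x_{\sigma(\alpha,\beta)} = x_{\alpha\beta}$ immediately. This part is essentially bookkeeping.

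For part (2), I would substitute into
\begin{equation*}
y_{\alpha\beta} = f_\alpha f_\beta \binom{n-2}{a-1,b-1} + f_\beta g_\alpha \binom{n-2}{a-2,b} + f_\alpha g_\beta \binom{n-2}{a,b-2}
\end{equation*}
and replace $g_\alpha$ by $g_{\alpha'} \equiv g_\alpha + f_\alpha$ and $g_\beta$ by $g_{\beta'} \equiv g_\beta + f_\beta$, using $f_{\alpha'}=f_\alpha$, $f_{\beta'}=f_\beta$. The first term is unchanged. The second term becomes $f_\beta(g_\alpha + f_\alpha)\binom{n-2}{a-2,b} = f_\beta g_\alpha \binom{n-2}{a-2,b} + f_\alpha f_\beta \binom{n-2}{a-2,b}$, and similarly the third picks up an extra $f_\alpha f_\beta \binom{n-2}{a,b-2}$. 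Thus mod $2$,
\begin{equation*}
y_{\sigma(\alpha,\beta)} \equiv y_{\alpha\beta} + f_\alpha f_\beta\left(\binom{n-2}{a-2,b} + \binom{n-2}{a,b-2}\right).
\end{equation*}

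The crux is then a binomial identity: I must show that the correction term $f_\alpha f_\beta\left(\binom{n-2}{a-2,b}+\binom{n-2}{a,b-2}\right)$ equals $f_{\alpha\beta} = \binom{n}{a,b} f_\alpha f_\beta$ modulo $2$. Dividing out the common factor $f_\alpha f_\beta$, this is the claim that $\binom{n-2}{a-2,b} + \binom{n-2}{a,b-2} \equiv \binom{n}{a,b} \pmod 2$, i.e. $\binom{n-2}{a-2} + \binom{n-2}{b-2} \equiv \binom{n}{a} \pmod 2$ (writing the multinomials as ordinary binomials since the bottom entries sum to the top). I expect this to be the main obstacle, though a mild one: I would prove it via Pascal's rule applied twice, writing $\binom{n}{a} = \binom{n-1}{a-1} + \binom{n-1}{a}$ and expanding each again, which yields $\binom{n}{a} = \binom{n-2}{a-2} + 2\binom{n-2}{a-1} + \binom{n-2}{a}$; since $\binom{n-2}{a} = \binom{n-2}{(n-2)-a} = \binom{n-2}{b-2}$ and the middle term is even, the congruence follows at once. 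Combining gives $y_{\sigma(\alpha,\beta)} \equiv y_{\alpha\beta} + f_{\alpha\beta}$, completing part (2).
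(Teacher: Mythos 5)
Your proof is correct and takes exactly the route the paper intends: the paper states this proposition with no proof at all, treating it as an immediate consequence of the preceding lemma ($g_{\lambda'}\equiv g_\lambda+f_\lambda$, together with $f_{\lambda'}=f_\lambda$), and your substitution argument supplies precisely that omitted verification. Your Pascal-rule step, $\binom{n}{a}=\binom{n-2}{a-2}+2\binom{n-2}{a-1}+\binom{n-2}{a}$ with $\binom{n-2}{a}=\binom{n-2}{b-2}$, is the right way to identify the correction term $f_\alpha f_\beta\left(\binom{n-2}{a-2,b}+\binom{n-2}{a,b-2}\right)$ with $f_{\alpha\beta}=\binom{n}{a,b}f_\alpha f_\beta$ modulo $2$.
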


 \begin{cor} \label{eps=sgn1}
 Let $n \geq 3$ be odd.  If $a+b=n$, then $\sigma$ restricts to a bijection from $\Bip_{\sgn^1}(a,b)$ to $\Bip_{\eps}(b,a)$.  Thus $N_{\sgn^1}(b,a)=N_{\eps}(a,b)$ and $N_{\sgn^1}(n)=N_{\eps}(n)$. 
 \end{cor}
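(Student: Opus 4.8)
The plan is to reduce everything to the determinant formula of Theorem \ref{hyper.Solomon}. By that theorem, $\det\rho_{\alpha\beta}=\sgn^1$ is equivalent to $x_{\alpha\beta}\equiv 1$ and $y_{\alpha\beta}\equiv 1\pmod 2$, whereas $\det\rho_{\alpha\beta}=\eps$ is equivalent to $x_{\alpha\beta}\equiv 1$ and $y_{\alpha\beta}\equiv 0$. Since $\sigma(\alpha,\beta)=(\alpha',\beta')$ preserves the sizes $a=|\alpha|$ and $b=|\beta|$, and satisfies $\sigma^2=\mathrm{id}$, it is a size-preserving involution of $\Bip(a,b)$; the task is to show that for $n$ odd it interchanges the $\sgn^1$-locus and the $\eps$-locus inside each $\Bip(a,b)$. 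Summing the resulting equalities $N_{\sgn^1}(a,b)=N_{\eps}(a,b)$ over all $a+b=n$ then gives $N_{\sgn^1}(n)=N_{\eps}(n)$.

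First I would invoke Proposition \ref{invo.cong}: under $\sigma$ the quantity $x_{\alpha\beta}$ is unchanged, while $y_{\alpha\beta}$ is replaced by $y_{\alpha\beta}+f_{\alpha\beta}$. Hence $\sigma$ preserves the parity of $x_{\alpha\beta}$ and flips the parity of $y_{\alpha\beta}$ precisely when $f_{\alpha\beta}$ is odd. Because both the $\sgn^1$-condition and the $\eps$-condition require $x_{\alpha\beta}\equiv 1$, the whole corollary reduces to a single parity claim: \emph{if $n$ is odd and $x_{\alpha\beta}$ is odd, then $f_{\alpha\beta}$ is odd}. Granting this, $\sigma$ fixes $x_{\alpha\beta}\equiv 1$ and flips $y_{\alpha\beta}$, so it sends $\Bip_{\sgn^1}(a,b)$ into $\Bip_{\eps}(a,b)$ and, by the identical computation from the $\eps$ side, sends $\Bip_{\eps}(a,b)$ into $\Bip_{\sgn^1}(a,b)$; since $\sigma^2=\mathrm{id}$ these inclusions are mutually inverse bijections.

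The heart of the argument, and the step I expect to be the main obstacle, is this parity claim. By Theorem \ref{x.thm}, $x_{\alpha\beta}$ odd means $\phi(\beta,\alpha)$ is chiral, so I would apply the characterization of chiral trivial-$2$-core partitions (the Proposition preceding Corollary \ref{phi.chiral}), with the roles of the two partitions swapped. This immediately forces $\alpha$ and $\beta$ to be odd, so $f_\alpha$ and $f_\beta$ are odd, and places us in one of its two cases. Case (2) would require $\ord(a)=\ord(b)=j$; writing $a=2^j u$ and $b=2^j v$ with $u,v$ odd gives $a+b=2^j(u+v)$ with $u+v$ even, so $2^{j+1}\mid n$, which is impossible for $n$ odd. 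Hence Case (1) holds, giving $\bin(a)\cap\bin(b)=\emptyset$, i.e. $\summ(a,b)$ is neat. Therefore $\binom{n}{a,b}$ is odd, and $f_{\alpha\beta}=\binom{n}{a,b}f_\alpha f_\beta$ is odd, as required.

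With the parity claim established, the remainder is bookkeeping. For $n$ odd, $\sigma$ restricts to a bijection $\Bip_{\sgn^1}(a,b)\to\Bip_{\eps}(a,b)$ for every decomposition $a+b=n$, whence $N_{\sgn^1}(a,b)=N_{\eps}(a,b)$. Summing over all $a+b=n$ yields $N_{\sgn^1}(n)=\sum_{a+b=n}N_{\sgn^1}(a,b)=\sum_{a+b=n}N_{\eps}(a,b)=N_{\eps}(n)$, completing the proof. I note that the hypothesis $n\geq 3$ (rather than merely $n$ odd) is used only to exclude the degenerate case $n=1$, where $s_1=(12)$ is unavailable and the identity genuinely fails; the parity argument itself is uniform across all odd $n\geq 3$.
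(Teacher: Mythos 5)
Your proof is correct and follows essentially the same route as the paper's: both reduce, via Proposition \ref{invo.cong}, to the key parity claim that when $n$ is odd and $x_{\alpha\beta}$ is odd, $f_{\alpha\beta}$ is odd, so that $\sigma$ preserves $x_{\alpha\beta}\equiv 1$ while flipping $y_{\alpha\beta}$. The only divergence is in how that claim is justified---the paper uses the elementary binomial observation that for $n$ odd, $\binom{n-1}{a,b-1}$ odd forces $b$ odd and $\binom{n}{a,b}$ odd, whereas you route through Theorem \ref{x.thm} and the chirality characterization of $\phi(\beta,\alpha)$, ruling out the $\ord(a)=\ord(b)$ case by a $2$-adic argument---and your indexing (a bijection $\Bip_{\sgn^1}(a,b)\to\Bip_{\eps}(a,b)$, with the same $(a,b)$) is in fact what the paper's own proof establishes, since $\sigma$ preserves the sizes of the two partitions.
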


 \begin{proof} 
 We must show that if $x_{\alpha \beta}=y_{\alpha \beta}=1$, then $x_{\sigma(\alpha,\beta)}=1$ and $y_{\sigma(\alpha,\beta)}=0$.    Now, if $n$ odd and $\binom{n-1}{a,b-1}$ is odd, it is elementary to see that $b$ is odd and $\binom{n}{a,b}$ is odd.  By the previous proposition we deduce that  $y_{\sigma(\alpha,\beta)}=0$.
  \end{proof}
 
 \begin{cor} \label{sgn.eps}  Let $n \geq 3$ be odd.  Then $N_{\sgn^1}(n)=N_{\eps}(n)=\frac{1}{4}A(2n)$.
 \end{cor}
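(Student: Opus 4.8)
The plan is to combine the two relations among these counts that are already in hand with the closed formula for $B$. First I would invoke Corollary \ref{eps=sgn1}, which for odd $n \geq 3$ gives the identity $N_{\sgn^1}(n) = N_\eps(n)$. This immediately settles the first equality in the statement and reduces the whole problem to computing the common value of these two numbers.

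Next I would feed this into Corollary \ref{x.cor}, which asserts $N_\eps(n) + N_{\sgn^1}(n) = B(2n)$ for all $n$. Since the two summands coincide for odd $n \geq 3$, this collapses to $2 N_\eps(n) = B(2n)$, and hence $N_\eps(n) = \half B(2n)$. At this point the question is purely one of evaluating $B(2n)$ for $n$ odd.

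The remaining step is to specialize the closed formula for $B$ from \cite[Theorem 1]{APS-chiral} at the argument $2n$. The relevant binary data is easy to read off: since $2n$ is even we have $\epsilon = 0$, and since $n$ is odd we have $k = \ord((2n)') = \ord(2n) = 1$. With $k = 1$ the indicated sum is empty, and the $\epsilon$-term vanishes, so the general formula degenerates to $B(2n) = \half A(2n)$. Substituting back gives $N_\eps(n) = \frac{1}{4} A(2n)$, and by the first step $N_{\sgn^1}(n)$ equals the same quantity.

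I do not expect a genuine obstacle here; the result is essentially a bookkeeping consequence of the preceding corollaries. The one point demanding care is the correct reading of the binary parameters of $2n$---namely that doubling an odd number yields a number with $\epsilon = 0$ and $\ord = 1$---so that the general $B$-formula collapses cleanly to its leading term $\half A(2n)$.
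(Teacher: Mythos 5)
Your proposal is correct and is essentially identical to the paper's own proof: both combine Corollary \ref{eps=sgn1} and Corollary \ref{x.cor} to get $N_{\sgn^1}(n)=N_{\eps}(n)=\tfrac{1}{2}B(2n)$, then specialize the closed formula for $B$ at $2n$ (where $\epsilon=0$ and $k=\ord(2n)=1$, so the sum is empty) to obtain $B(2n)=\tfrac{1}{2}A(2n)$. Nothing to add.
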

 
 \begin{proof} By Corollaries \ref{x.cor} and \ref{eps=sgn1}, we have $N_{\sgn^1}(n)=N_{\eps}(n)=\half B(2n)$.  But by Corollary \ref{APS-chiral}, $B(2n)=\half A(2n)$.
 \end{proof}

\subsection{Counting with fixed $a,b$.}
 
Given $\omega\in\{1, \eps, \sgn^0, \sgn^1\}$, and $a+b=n$, put
\beq
N_{\omega}(a,b)=\#\{ \alpha \vdash a, \beta \vdash b \mid\det \rho_{\alpha\beta}=\omega\} .
\eeq

We compute $N_\eps(a,b)$ in Proposition \ref{N_eps},  $N_{\sgn^1}(a,b)$ in Proposition \ref{N_sgn1}, and $N_{\sgn^0}(a,b)$ in Proposition \ref{N_sgn0}.  Of course,
\beq
p(a)p(b)=N_1(a,b)+N_{\sgn^0}(a,b)+N_{\sgn^1}(a,b)+N_\eps(a,b),
\eeq
so one implicitly has an expression for $N_1(a,b)$ as well.

Recall that
\beq
x_{\alpha\beta}= f_{\alpha}f_{\beta}{{n-1}\choose{a, b-1}},
\eeq
and 
\beq
y_{\alpha\beta}=f_{\alpha}f_{\beta}{{n-2}\choose{a-1, b-1}}+f_{\beta}g_{\alpha}{{n-2}\choose{a-2, b}}+f_{\alpha}g_{\beta}{{n-2}\choose{a, b-2}}.
\eeq

\begin{lemma} \label{im.at.ccd}
Let $a+b=n$.  The following conditions are equivalent:
\begin{enumerate}
\item ${{n-2}\choose{a-1,b-1}}\text{ is odd, }{{n-2}\choose{a,b-2}}\ \text{is even, and }{{n-2}\choose{a-2,b}}\ \text{is even}$. \\
\item $n$ is even and ${{n-2}\choose{a-1, b-1}}$ is odd.
\end{enumerate}
\end{lemma}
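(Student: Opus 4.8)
The plan is to reduce every statement to the classical parity criterion recalled in Section~\ref{bin.notation}: for $i+j=m$ the coefficient $\binom{m}{i,j}$ is odd exactly when $\summ(i,j)$ is neat, i.e. $\bin(i)\cap\bin(j)=\emptyset$. Since $a+b=n$, each of the three lower pairs $(a-1,b-1)$, $(a,b-2)$, $(a-2,b)$ sums to $n-2$, so all three coefficients are honest multinomials to which the criterion applies. I adopt the convention that a coefficient with a negative lower entry is $0$, hence even; this disposes of the degenerate cases $a\le 1$ or $b\le 1$, since in each of them $\binom{n-2}{a-1,b-1}$ already vanishes and both (1) and (2) are false. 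Thus the whole lemma becomes a statement about disjointness of binary supports, and the two facts I will use repeatedly are: if $c$ is odd then $\bin(c)=\bin(c-1)\cup\{0\}$ with $0\notin\bin(c-1)$, while if $c$ is even then $0\notin\bin(c)$ and $\bin(c-2)=\bin(c-1)\setminus\{0\}$ (subtracting $1$ from the odd number $c-1$ merely clears bit $0$).

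First I would record the key parity consequence of the surviving hypothesis that $\binom{n-2}{a-1,b-1}$ is odd, namely that $a$ and $b$ cannot both be even: if they were, then $a-1$ and $b-1$ would both be odd, both supports would contain $0$, and the coefficient would be even. This single observation drives both implications. For the direction (2)$\Rightarrow$(1): assuming $n$ even forces $a,b$ of equal parity, and combined with the above they must both be odd. Then $a$ and $b-2$ are both odd, so $\bin(a)$ and $\bin(b-2)$ share bit $0$ and $\binom{n-2}{a,b-2}$ is even; likewise $a-2$ and $b$ are both odd, so $\binom{n-2}{a-2,b}$ is even. This yields (1).

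The converse (1)$\Rightarrow$(2) amounts to showing $n$ is even, and this is where the real work lies. Assuming (1), the coefficient $\binom{n-2}{a-1,b-1}$ is odd, so $\bin(a-1)\cap\bin(b-1)=\emptyset$ and, by the observation, $a,b$ are not both even. Suppose toward a contradiction that $n$ is odd, so exactly one of $a,b$ is even. If $a$ is even and $b$ is odd, I compute $\bin(a-2)\cap\bin(b)$ using the shift identities: $\bin(a-2)=\bin(a-1)\setminus\{0\}$ and $\bin(b)=\bin(b-1)\cup\{0\}$, and since $\bin(a-1)$ and $\bin(b-1)$ are disjoint while $\bin(b-1)$ omits bit $0$, one obtains $\bin(a-2)\cap\bin(b)=\bin(a-1)\cap\bin(b-1)=\emptyset$, so $\binom{n-2}{a-2,b}$ is odd, contradicting (1). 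The mirror sub-case $a$ odd, $b$ even is symmetric, producing instead $\binom{n-2}{a,b-2}$ odd via $\bin(b-2)=\bin(b-1)\setminus\{0\}$ and $\bin(a)=\bin(a-1)\cup\{0\}$. Either way we contradict (1), so $n$ must be even, which together with the odd coefficient gives (2).

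I expect the only delicate point to be the bookkeeping of bit $0$ under the unit and double shifts $c\mapsto c-1,c-2$; once the two shift identities above are stated cleanly, the three support-disjointness computations are mechanical, and the boundary cases collapse by the zero convention.
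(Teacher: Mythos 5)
Your core argument is the natural one (the paper itself leaves this lemma to the reader, so there is no competing proof to compare against): reduce everything to the carry criterion, $\binom{m}{i,j}$ odd iff $\bin(i)\cap\bin(j)=\emptyset$, and track bit $0$ under the shifts $c\mapsto c-1,\,c-2$. For $a,b\geq 2$ both of your implications are correct: the observation that an odd $\binom{n-2}{a-1,b-1}$ forbids $a,b$ from both being even; the deduction that under (2) both are odd, so each outer coefficient has bit $0$ in both of its entries and is therefore even; and the contradiction argument showing that if $n$ were odd, one outer coefficient would inherit the disjointness of $\bin(a-1)$ and $\bin(b-1)$ and be odd.

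However, your disposal of the boundary cases is false as stated, and this is a genuine (if small) gap. For $a=1$ the middle coefficient does not vanish: $\binom{n-2}{a-1,b-1}=\binom{n-2}{0,n-2}=1$, which is odd. Moreover both (1) and (2) can hold in this regime: take $n=6$, $a=1$, $b=5$; then $\binom{4}{0,4}=1$ is odd, $\binom{4}{1,3}=4$ is even, $\binom{4}{-1,5}=0$ is even by your convention, and $n$ is even, so (1) and (2) are both true rather than both false. Hence the cases $a=1$ and $b=1$ satisfy the lemma non-vacuously, and your proof as written never addresses them, since it excludes them on the strength of an incorrect claim. The repair is immediate: run your same bit-$0$ argument in these cases, invoking the zero convention only for the single coefficient whose entry goes negative (for $a=1$, $\binom{n-2}{-1,b}=0$ is even, and $\binom{n-2}{1,b-2}$ is handled either by the shared-bit-$0$ argument when $b\geq 3$ or by the convention when $b=1$); the direction (1)$\Rightarrow$(2) needs no change at all, since under (1) the even member of $\{a,b\}$ is automatically at least $2$ (it cannot be $0$, as that would make the middle coefficient vanish). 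With that substitution your proof is complete.
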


The proof is left to the reader. $\square$

\begin{proposition} \label{N_eps}
Let $a+b=n$. 
If $n$ is even, then
\begin{equation*}
  N_{\eps}(a,b)=
 \begin{cases}
  \frac{1}{2}A(a)A(b) & \text{if } \binom{n-1}{a,b-1} \text{ is odd and } \binom{n-2}{a-1,b-1} \text{ is even} \\
  0 & \text{otherwise.} \\
  \end{cases}
  \end{equation*}
  If $n$ is odd, then
  \begin{equation*}
  N_{\eps}(a,b)=
 \begin{cases}
  \frac{1}{2}A(a)A(b) & \text{if } \binom{n-1}{a,b-1} \text{ is odd}  \\
  0 & \text{otherwise.} \\
  \end{cases}
  \end{equation*}
  \end{proposition}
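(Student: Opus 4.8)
The plan is to read $\det\rho_{\alpha\beta}$ directly off Theorem \ref{hyper.Solomon}. Since $\eps = \eps^{1}(\sgn^0)^{0}$, we have $\det\rho_{\alpha\beta}=\eps$ if and only if $x_{\alpha\beta}\equiv 1$ and $y_{\alpha\beta}\equiv 0 \pmod 2$. Because $x_{\alpha\beta}=f_\alpha f_\beta\binom{n-1}{a,b-1}$, it can be odd only when $\alpha$ and $\beta$ are both odd \emph{and} $\binom{n-1}{a,b-1}$ is odd. This disposes of the ``otherwise'' branch when $\binom{n-1}{a,b-1}$ is even: there $x_{\alpha\beta}\equiv 0$ for every $(\alpha,\beta)$, so $N_\eps(a,b)=0$. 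Henceforth I assume $\binom{n-1}{a,b-1}$ is odd and restrict to odd $\alpha,\beta$, for which $f_\alpha\equiv f_\beta\equiv 1$ and
\[
y_{\alpha\beta}\equiv \binom{n-2}{a-1,b-1} + g_\alpha\binom{n-2}{a-2,b} + g_\beta\binom{n-2}{a,b-2}\pmod 2 .
\]
Write $d_0,c_1,c_2$ for these three coefficients reduced mod $2$.

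Next I would pin down the parities of $d_0,c_1,c_2$ from the neat-sum criterion, which says $\binom{n-1}{a,b-1}$ odd is equivalent to $\bin(a)$ and $\bin(b-1)$ being disjoint, hence $\bin(a)\subseteq\bin(n-1)$. For $n$ odd, the bit $0$ lies in neither $a$ nor $b-1$, forcing $a$ even and $b$ odd; using $\bin(n-2)=\{0,\dots,k-1\}\cup(\bin(n-1)\setminus\{k\})$ with $k=\ord(n-1)$, one checks that $\binom{n-2}{a}$ is odd iff $k\notin\bin(a)$ and $\binom{n-2}{b}$ is odd iff $k\in\bin(a)$, so exactly one of $c_1,c_2$ is odd. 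For $n$ even I split on $d_0$, matching the stated dichotomy. If $d_0$ is odd, Lemma \ref{im.at.ccd} gives $c_1\equiv c_2\equiv 0$, whence $y_{\alpha\beta}\equiv 1$ for every odd pair and $N_\eps(a,b)=0$; this is exactly the remaining ``otherwise'' branch. If $d_0$ is even then $a$ must be even (an odd $a$ would make $\bin(a-1)\cap\bin(b-1)=\emptyset$, so $d_0$ odd), and then $\bin(a)\subseteq\bin(n-1)\setminus\{0\}=\bin(n-2)$, so $c_2$ is odd.

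Having shown that in every surviving case at least one of $c_1,c_2$ is odd, I finish with a counting argument resting on Corollary \ref{f.g.correlation}: among the $A(a)$ odd partitions of $a$, exactly $\tfrac12 A(a)$ have $g_\alpha$ even and $\tfrac12 A(a)$ have $g_\alpha$ odd, and similarly for $b$. The condition $y_{\alpha\beta}\equiv 0$ reads $g_\alpha c_1+g_\beta c_2\equiv d_0$. If exactly one of $c_1,c_2$ is odd, this fixes the relevant $g$-parity and leaves the other partition free, giving $\tfrac12 A(a)\cdot A(b)$; if both are odd it prescribes $g_\alpha+g_\beta\equiv d_0$, satisfied by precisely half of the $A(a)A(b)$ pairs. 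Either way $N_\eps(a,b)=\tfrac12 A(a)A(b)$, which together with the two vanishing cases proves both formulas.

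The main obstacle is the middle step: the bookkeeping of binary digits that determines the parities of $\binom{n-2}{a-1,b-1}$, $\binom{n-2}{a-2,b}$, and $\binom{n-2}{a,b-2}$ from that of $\binom{n-1}{a,b-1}$, and in particular the assertion that at least one of $c_1,c_2$ is always odd. Lemma \ref{im.at.ccd} settles the delicate $n$-even, $d_0$-odd situation cleanly, but the remaining claims---that $\bin(a)\subseteq\bin(n-2)$ when $a$ is even, and the exactly-one-odd dichotomy for $n$ odd---rest on a careful analysis of how subtracting $1$ or $2$ rearranges the low-order bits, which is where I expect to spend most of the effort.
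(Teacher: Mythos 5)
Your proposal is correct, and its skeleton is the same as the paper's: reduce via Theorem \ref{hyper.Solomon} to the conditions $x_{\alpha\beta}\equiv 1$, $y_{\alpha\beta}\equiv 0$; observe that $x_{\alpha\beta}$ odd forces $f_\alpha$, $f_\beta$, $\binom{n-1}{a,b-1}$ all odd; invoke Lemma \ref{im.at.ccd} to kill the case $n$ even with $\binom{n-2}{a-1,b-1}$ odd; and count the surviving pairs with Corollary \ref{f.g.correlation}. Where you differ is in how the parities of the three coefficients $d_0=\binom{n-2}{a-1,b-1}$, $c_1=\binom{n-2}{a-2,b}$, $c_2=\binom{n-2}{a,b-2}$ get pinned down. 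The paper does this almost without touching binary digits: Pascal's identity $\binom{n-1}{a,b-1}=\binom{n-2}{a-1,b-1}+\binom{n-2}{a,b-2}$ shows exactly one of $d_0,c_2$ is odd, and in the case $d_0$ odd, $c_2$ even, the \emph{failure} of condition (1) of Lemma \ref{im.at.ccd} forces $c_1$ odd; this treats both parities of $n$ uniformly and makes the case analysis very short. You instead split on the parity of $n$ and determine the parities by direct Lucas-type digit analysis (for $n$ odd: $a$ even, $b$ odd, and exactly one of $c_1,c_2$ odd according to whether $k=\ord(n-1)$ lies in $\bin(a)$; for $n$ even with $d_0$ even: $a,b$ even and $c_2$ odd). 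I checked these digit claims and they are all correct, though as you note they are where the real work lies, and the paper's Pascal-plus-contrapositive trick would save you most of that effort. What your route buys in exchange is a cleaner endgame: the single observation that $y_{\alpha\beta}\equiv 0$ reads $g_\alpha c_1+g_\beta c_2\equiv d_0$, and that whenever at least one of $c_1,c_2$ is odd this is satisfied by exactly half of the odd pairs, replaces the paper's three separate subcases (2a, 2b, and Case 1). Both arguments hinge on the same crucial point, which you correctly secure: under the standing hypotheses one can never have $c_1\equiv c_2\equiv 0$ with $d_0\equiv 0$, since that degenerate case would give $A(a)A(b)$ rather than $\tfrac12 A(a)A(b)$.
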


\begin{proof}
We must count $\alpha \vdash a,\beta \vdash b$ so that $x_{\alpha\beta} \equiv 1$ and $y_{\alpha\beta} \equiv 0$.
For $x_{\alpha\beta}$ to be odd, we need $f_{\alpha}$, $f_{\beta}$ and ${{n-1}\choose{a, b-1}}$ to be odd. 

In this case
\beq
y_{\alpha,\beta}=\binom{n-2}{a-1,b-1}+ g_\alpha \binom{n-2}{a-2,b}+g_\beta \binom{n-2}{a,b-2}.
\eeq
It is now clear that $\Bip_\eps(a,b)=\emptyset$ if either ${{n-1}\choose{a, b-1}}$ is even, or if $n$ is even and ${{n-2}\choose{a-1, b-1}}$ is odd (using Lemma \ref{im.at.ccd}).

So henceforth assume that  ${{n-1}\choose{a, b-1}}$ is odd, and that the conditions of Lemma \ref{im.at.ccd} do not hold.  Since
\beq{}
{{n-1}\choose{a, b-1}}={{n-2}\choose{a-1, b-1}}+{{n-2}\choose{a, b-2}},
\eeq{}
we  consider two cases.

\bigskip

{\bf Case $1$:} ${{n-2}\choose{a-1, b-1}}$ is odd and ${{n-2}\choose{a, b-2}}$ is even.
As we assume the conditions of Lemma \ref{im.at.ccd} do not hold, further ${{n-2}\choose{a-2,b}}$ is odd.
Thus
\beq{}
y_{\alpha\beta}\equiv 1+g_{\alpha} \mod 2.
\eeq{}
So for such $a,b$ we have
\beq
\Bip_{\eps}(a,b)=\{ (\alpha,\beta) \mid f_{\alpha}\text{\ is odd,\ }g_{\alpha}\text{\ is odd}, f_{\beta}\text{\ is odd}\},
\eeq
and by Corollary \ref{f.g.correlation}, we compute
\beq{}
N_{\eps}(a,b)  =\frac{1}{2}A(a)A(b).
\eeq{}
{\bf Case $2$:} If ${{n-2}\choose{a-1, b-1}}$ is even and ${{n-2}\choose{a, b-2}}$ is odd, then
\beq{}
y_{\alpha\beta}\equiv g_{\alpha}{{n-2}\choose{a-2,b}}+g_{\beta} .
\eeq{}
For $y_{\alpha\beta}$ to be even, we need $g_{\alpha}{{n-2}\choose{a-2, b}}$ and $g_{\beta}$ to be both odd or both even.  
We again have two possibilities.\\

{\bf Case $2$a:} If ${{n-2}\choose{a-2, b}}$ is even, then
\beq
\Bip_{\eps}(a,b)=\{ (\alpha,\beta) \mid f_{\alpha}\text{\ is odd,\ } f_{\beta}\text{\ is odd}, g_{\beta}\text{\ is odd}\},
\eeq
so $N_{\eps}(a,b) =\frac{1}{2}A(a)A(b)$, as claimed.

\bigskip

{\bf Case $2$b}: If ${{n-2}\choose{a-2, b}}$ is odd, then
\beq
\begin{split}
\Bip_{\eps}(a,b) &=\{ (\alpha,\beta) \mid f_{\alpha}\text{\ is odd,\ }g_{\alpha}\text{\ is even}, f_{\beta}\text{\ is odd},g_{\beta}\text{\ is even}\} \\
& \cup \{ (\alpha,\beta) \mid f_{\alpha}\text{\ is odd,\ }g_{\alpha}\text{\ is odd}, f_{\beta}\text{\ is odd},g_{\beta}\text{\ is odd}\}.
\end{split}
\eeq
Thus
\beq{}
\begin{split}
N_{\eps}(a,b)&=  A(a)A(b)-A(a)\frac{1}{2}A(b)-\frac{1}{2}A(a)A(b)+2\cdot \frac{1}{2}A(a)\frac{1}{2}A(b)\\
& =\frac{1}{2}A(a)A(b).
\end{split}
\eeq{}
\end{proof}

\begin{proposition}
For given $a$ and $b$, we have
\label{N_sgn1}
\begin{equation}
  N_{\sgn^1}(a,b)=
  \left\{
  \begin{array}{ll}
  0 \ \ \text{if } {{n-1}\choose{a, b-1}}\ \text{is even},\\
  A(a)A(b) \text{ if } n \text{ is even and }{{n-2}\choose{a-1,b-1}}\text{ is odd}, \\
  \frac{1}{2}A(a)A(b) \ \ \text{otherwise.}
  \end{array}
  \right.
  \end{equation}
\end{proposition}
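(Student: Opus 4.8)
The plan is to count bipartitions $(\alpha,\beta)$ with $\alpha \vdash a$, $\beta \vdash b$ such that $x_{\alpha\beta} \equiv 1$ and $y_{\alpha\beta} \equiv 1 \pmod 2$, since by Theorem \ref{hyper.Solomon} we have $\det \rho_{\alpha\beta} = \sgn^1 = \eps \cdot \sgn^0$ exactly when both exponents are odd. As in the proof of Proposition \ref{N_eps}, the condition $x_{\alpha\beta} \equiv 1$ forces $f_\alpha$, $f_\beta$, and $\binom{n-1}{a,b-1}$ all to be odd; if $\binom{n-1}{a,b-1}$ is even then $\Bip_{\sgn^1}(a,b)=\emptyset$, giving the first case. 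So I would assume throughout that $f_\alpha, f_\beta$ are odd and that $\binom{n-1}{a,b-1}$ is odd, and reduce $y_{\alpha\beta}$ under these hypotheses to
\beq
y_{\alpha\beta} \equiv \binom{n-2}{a-1,b-1} + g_\alpha \binom{n-2}{a-2,b} + g_\beta \binom{n-2}{a,b-2} \pmod 2.
\eeq

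The heart of the argument is to organize the count according to the parities of the three binomial coefficients appearing here, exactly mirroring the case analysis in Proposition \ref{N_eps} but now demanding $y_{\alpha\beta} \equiv 1$ rather than $\equiv 0$. The key structural fact I would invoke is Lemma \ref{im.at.ccd}: the middle case of the proposition, where $N_{\sgn^1}(a,b) = A(a)A(b)$, corresponds precisely to condition (2) there, namely $n$ even and $\binom{n-2}{a-1,b-1}$ odd. Under that condition, Lemma \ref{im.at.ccd} tells us $\binom{n-2}{a-2,b}$ and $\binom{n-2}{a,b-2}$ are both even, so $y_{\alpha\beta} \equiv \binom{n-2}{a-1,b-1} \equiv 1$ automatically, independent of $g_\alpha, g_\beta$. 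Hence here $\Bip_{\sgn^1}(a,b)$ is the set of all $(\alpha,\beta)$ with $f_\alpha, f_\beta$ odd, and by Corollary \ref{f.g.correlation} (parts (1) and (2) combined, giving $A(a)$ odd partitions of $a$ and $A(b)$ of $b$) this has cardinality $A(a)A(b)$.

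For the remaining ``otherwise'' case I would assume $\binom{n-1}{a,b-1}$ is odd but the condition of Lemma \ref{im.at.ccd} fails, and run the same two-case split as in Proposition \ref{N_eps}. Using $\binom{n-1}{a,b-1} = \binom{n-2}{a-1,b-1} + \binom{n-2}{a,b-2}$, exactly one of these two summands is odd. In Case 1, $\binom{n-2}{a-1,b-1}$ odd and $\binom{n-2}{a,b-2}$ even, the failure of Lemma \ref{im.at.ccd} forces $\binom{n-2}{a-2,b}$ odd, so $y_{\alpha\beta} \equiv 1 + g_\alpha$; requiring this to be $1$ means $g_\alpha$ even, and I count via Corollary \ref{f.g.correlation}(2) to get $\frac12 A(a)A(b)$. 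In Case 2, $\binom{n-2}{a-1,b-1}$ even and $\binom{n-2}{a,b-2}$ odd, I split on the parity of $\binom{n-2}{a-2,b}$ and in each sub-case solve the resulting linear congruence in $g_\alpha, g_\beta$ using Corollary \ref{f.g.correlation}, obtaining $\frac12 A(a)A(b)$ after the same inclusion–exclusion bookkeeping as in Case 2b of Proposition \ref{N_eps}. The main obstacle is purely one of not dropping the implicit hypothesis $f_\alpha \equiv f_\beta \equiv 1$ while translating ``$y$ odd'' conditions into $g$-parity conditions: every count must be taken inside the set where $f_\alpha, f_\beta$ are already odd, so that the $\frac12$ factors from Corollary \ref{f.g.correlation} are applied correctly and the complementary $g$-even counts are obtained by subtracting from the odd-$f$ totals $A(a), A(b)$. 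Once the parity bookkeeping is pinned down, the result drops out, and the final $\frac12 A(a)A(b)$ in the generic case is consistent with the symmetry $N_{\sgn^1}(a,b) + N_\eps(a,b)$ matching the total count of bipartitions with $x_{\alpha\beta}$ odd.
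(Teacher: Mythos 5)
Your proposal is correct and follows essentially the same route as the paper's own proof: reduce via Theorem \ref{hyper.Solomon} to the conditions $x_{\alpha\beta}\equiv 1$, $y_{\alpha\beta}\equiv 1$, note that $x_{\alpha\beta}\equiv 1$ forces $f_\alpha$, $f_\beta$, $\binom{n-1}{a,b-1}$ odd, then split on the parities of $\binom{n-2}{a-2,b}$, $\binom{n-2}{a-1,b-1}$, $\binom{n-2}{a,b-2}$ and count with Corollary \ref{f.g.correlation}. Your explicit use of Lemma \ref{im.at.ccd} to identify the $A(a)A(b)$ case with ``$n$ even and $\binom{n-2}{a-1,b-1}$ odd'' is a point the paper leaves implicit, but it is the same argument.
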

\begin{proof}
For $x_{\alpha\beta}$ to be odd, we need $f_{\alpha}$, $f_{\beta}$ and ${{n-1}\choose{a, b-1}}$ to be odd.  Again there are two cases to consider.
\\
Case $1$: If ${{n-2}\choose{a-1, b-1}}$ is odd and ${{n-2}\choose{a, b-2}}$ is even, then
\beq{}
y_{\alpha\beta}\equiv 1+g_{\alpha}{{n-2}\choose{a-2,b}} \text{\ (mod $2$)}.
\eeq{}
For $y_{\alpha\beta}$ to be odd, we need $g_{\alpha}{{n-2}\choose{a-2, b}}$ to be even.\\
Case $1$a: If ${{n-2}\choose{a-2, b}}$ is even, then
$N_{\sgn^1}(a,b)=A(a)A(b)$.
\\
Case $1$b: If ${{n-2}\choose{a-2, b}}$ is odd, then  
\beq{}
\begin{split}
N_{\sgn^1}(a,b) & = \#\{f_{\alpha}\text{\ is odd,\ }g_{\alpha}\text{\ is even}\}\#\{f_{\beta}\text{\ is odd}\}\\
& = \left[\#\{f_{\alpha}\text{\ is odd}\}-\#\{f_{\alpha}\text{\ is odd,\ }g_{\alpha}\text{\ is odd}\}\right]\#\{f_{\beta}\text{\ is odd}\}\\
& =\frac{1}{2}A(a)A(b).
\end{split}
\eeq{}
Case $2$: If ${{n-2}\choose{a-1, b-1}}$ is even and ${{n-2}\choose{a, b-2}}$ is odd, then
\beq{}
y_{\alpha\beta}\equiv g_{\alpha}{{n-2}\choose{a-2,b}}+g_{\beta} \text{\ (mod $2$)}.
\eeq{}
For $y_{\alpha\beta}$ to be odd, we have two cases:\\
Case $2$a: If $g_{\alpha}{{n-2}\choose{a-2, b}}$ is even and $g_{\beta}$ is odd, then\\
\beq{}
y_{\alpha\beta}\equiv g_{\beta} \text{\ (mod $2$)}.
\eeq{}
Thus, we have
\beq{}
\begin{split}
N_{\sgn^1}(a,b) & = \#\{f_{\beta}\text{\ is odd,\ }g_{\beta}\text{\ is odd}\}\#\{f_{\alpha}\text{\ is odd}\}\\
& =\frac{1}{2}A(b)A(a).
\end{split}
\eeq{}
Case $2$b: If $g_{\alpha}{{n-2}\choose{a-2, b}}$ is odd and $g_{\beta}$ is even, then\\
\beq{}
y_{\alpha\beta}\equiv g_{\beta}+g_{\alpha} \text{\ (mod $2$)}.
\eeq{}
Thus, we have
\beq{}
\begin{split}
N_{\sgn^1}(a,b) & = \#\{f_{\beta}\text{\ is odd,\ }g_{\beta}\text{\ is odd}\}\#\{f_{\alpha}\text{\ is odd,\ }g_{\alpha}\text{\ is even}\}\\
&\ \ \ \ +\#\{f_{\beta}\text{\ is odd,\ }g_{\beta}\text{\ is even}\}\#\{f_{\alpha}\text{\ is odd,\ }g_{\alpha}\text{\ is odd}\}\\
& =\frac{1}{2}A(b)A(a).
\end{split}
\eeq{}

\end{proof}

Our formulas for $N_{\sgn^0}(a,b)$, where $a+b=n$, depend on the parities of the binomial coefficients ${{n-2}\choose{a-2,b}}$, ${{n-2}\choose{a-1,b-1}}$, and ${{n-2}\choose{a,b-2}}$.

\begin{proposition}
 The following table computes $N_{\sgn^0}(a,b)$.
 
\label{N_sgn0}
\vspace{.25cm}
\begin{center}
 \begin{tabular}{|c| c |c||c|} 
 \hline
  ${{n-2}\choose{a-2,b}}$ & ${{n-2}\choose{a-1,b-1}}$ & ${{n-2}\choose{a,b-2}}$ & $N_{\sgn^0}(a,b)$\\ [0.5ex] 
 \hline\hline
 $0$ &  $0$ &  $0$ & $0$\\
 \hline
  $0$ & $1$ &  $0$ & $0$\\
 \hline
 $1$ &  $0$ &  $0$ & $A(b)B(a)$  \\
 \hline
  $0$ & $1$ & $1$  & $A(a)B(b)$  \\
 \hline
 $1$ & $1$ & $1$  & $A(a)B(b)+A(b)B(a)-\half A(a)A(b)$  \\
 \hline
 $1$ & $1$ &  $0$ & $A(b)B(a)-\frac{1}{2}A(a)A(b)$  \\
 \hline
  $0$ &  $0$ & $1$  & $A(a)B(b)-\frac{1}{2}A(a)A(b)$  \\
 \hline
 $1$ &  $0$ & $1$  & $A(a)B(b) + A(b) B(a)- A(a)A(b)$ \\
 \hline
 \end{tabular}
\end{center}

(In the table we read the binomial coefficients mod $2$.)

\vspace{.25cm}
\end{proposition}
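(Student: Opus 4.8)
The plan is to invoke Theorem~\ref{hyper.Solomon}, which says that $\det\rho_{\alpha\beta}=\sgn^0$ exactly when $x_{\alpha\beta}\equiv 0$ and $y_{\alpha\beta}\equiv 1 \pmod 2$. Write $P$, $Q$, $R$ for the reductions mod $2$ of $\binom{n-2}{a-2,b}$, $\binom{n-2}{a-1,b-1}$, $\binom{n-2}{a,b-2}$, so that the eight rows of the table are indexed by the values of $(P,Q,R)$. The first step is to record, by Pascal's rule, that $\binom{n-1}{a,b-1}\equiv Q+R \pmod 2$; hence $x_{\alpha\beta}\equiv f_\alpha f_\beta (Q+R)$ and $y_{\alpha\beta}\equiv f_\alpha f_\beta\,Q+f_\beta g_\alpha\,P+f_\alpha g_\beta\,R \pmod 2$. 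Consequently the requirement $x_{\alpha\beta}\equiv 0$ is automatic when $Q=R$, and forces $f_\alpha f_\beta$ to be even (i.e.\ $f_\alpha$ or $f_\beta$ even) when $Q\neq R$.

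Next I would run through the eight cases for $(P,Q,R)$. In each case the vanishing of whichever of $P,Q,R$ are even deletes the corresponding terms of $y_{\alpha\beta}$, and the surviving expression, combined with the $x$-constraint above, reduces the pair of congruences $x_{\alpha\beta}\equiv 0,\ y_{\alpha\beta}\equiv 1$ to a short list of parity conditions on $f_\alpha,g_\alpha,f_\beta,g_\beta$. The two vanishing rows are immediate: for $(0,0,0)$ one has $y_{\alpha\beta}\equiv 0$, while for $(0,1,0)$ the $x$-constraint demands $f_\alpha f_\beta$ even whereas $y_{\alpha\beta}\equiv f_\alpha f_\beta$ demands it odd, so both counts are $0$. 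A single-term row such as $(1,0,0)$ carries no $x$-constraint and gives $y_{\alpha\beta}\equiv f_\beta g_\alpha$, so one counts $\beta$ with $f_\beta$ odd and $\alpha$ with $g_\alpha$ odd, yielding $A(b)B(a)$; the mirror row $(0,0,1)$ differs only in that there $Q\neq R$ forces $f_\beta$ even, which is why its value acquires the extra $-\half A(a)A(b)$.

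To evaluate the resulting parity counts I will use the basic tallies $\#\{\alpha\vdash a: f_\alpha\equiv 1\}=A(a)$ and $\#\{\alpha\vdash a: g_\alpha\equiv 1\}=B(a)$ (recall $g_\alpha$ odd means $\alpha$ is chiral), together with the joint counts of Corollary~\ref{f.g.correlation}, namely $\half A(a)$, $\half A(a)$, and $B(a)-\half A(a)$ for the types $(f_\alpha,g_\alpha)\equiv (1,1),(1,0),(0,1)$. The multi-term rows $(0,1,1)$, $(1,0,1)$, $(1,1,0)$ and especially $(1,1,1)$ are where the work lies, and the all-ones case is the main obstacle: there $y_{\alpha\beta}\equiv f_\alpha f_\beta+f_\beta g_\alpha+f_\alpha g_\beta$ with no $x$-constraint, so to count the solutions of $y_{\alpha\beta}\equiv 1$ I would split according to the type of $\alpha$ — the subcase $f_\alpha=0$ forcing $g_\alpha=1$ and $f_\beta=1$, the subcase $(f_\alpha,g_\alpha)=(1,0)$ forcing $f_\beta+g_\beta$ odd, and the subcase $(f_\alpha,g_\alpha)=(1,1)$ forcing $g_\beta$ odd — and then sum the three contributions using the tallies above. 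Collecting terms yields $A(a)B(b)+A(b)B(a)-\half A(a)A(b)$. The remaining multi-term rows are treated identically, partitioning by the type of $\alpha$ (and imposing $f_\alpha f_\beta$ even in the rows where $Q\neq R$), and each collapses to the stated entry.
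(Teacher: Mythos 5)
Your proposal is correct and follows essentially the same route as the paper's proof: a case analysis over the eight parity codes $(P,Q,R)$, using Pascal's rule to express the $x_{\alpha\beta}$-constraint as $f_\alpha f_\beta(Q+R)\equiv 0$, and counting solutions via the joint tallies of Corollary~\ref{f.g.correlation}. The case reductions you sketch (including the rows you describe as ``treated identically'') do collapse to the tabulated entries exactly as in the paper, so there is nothing to add beyond writing out those remaining rows explicitly.
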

\begin{proof}

Each of these eight cases we will refer to by the binary ``code" coming from the parity of the binomial coefficients.  For instance, the first two cases correspond to the codes `$000$' and `$010$'.
Of course, each code further determines the parity of the binomial coefficient  $\binom{n-1}{a,b-1}$.

For the first two cases, suppose that ${{n-2}\choose{a-2,b}}$ and ${{n-2}\choose{a,b-2}}$ are even, but  $x_{\alpha\beta}=0$ and  
\beq
y_{\alpha \beta}=f_\alpha f_\beta {{n-2}\choose{a-1,b-1}} =1.
\eeq
Then $f_\alpha f_\beta=1$, so it must be that $\binom{n-1}{a,b-1}=0$.  But then necessarily $ {{n-2}\choose{a-1,b-1}}=0$, a contradiction.
Thus $\Bip_{\sgn^0}(a,b)=\emptyset$ in the `$000$' and `$010$' cases.

For the `$100$' case, we automatically have $x_{\alpha \beta}=0$ and $y_{\alpha\beta}= f_{\beta}g_{\alpha}$.  Thus $(\alpha,\beta) \in \Bip_{\sgn^0}(a,b)$ iff $f_\beta=g_\alpha=1$.
So $N_{\sgn^0}(a,b)=A(b)B(a)$ here.  

For the `$011$' case, again $x_{\alpha \beta}=0$ but
\beq
y_{\alpha \beta}=f_\alpha(f_\beta+g_\alpha),
\eeq
which is odd iff $f_\alpha$ is odd and $f_\beta \neq g_\alpha$. By Corollary \ref{f.g.correlation}, this gives $N_{\sgn^0}(a,b)=A(a)B(b)$.
 
Now consider the `$111$' case, again with  $x_{\alpha \beta}=0$, but
\beq
y_{\alpha\beta}=f_{\alpha}f_{\beta} +f_{\beta}g_{\alpha} +f_{\alpha}g_{\beta}
\eeq
 must be odd.  Note that $f_\alpha$ or $g_\alpha$ must be odd; we consider the three possibilities.
 For $f_\alpha=g_\alpha=1$, then $y_{\alpha \beta}=g_\beta$; this gives $\half A(a)B(b)$.
 For $f_\alpha=0$, $g_\alpha=1$, then $y_{\alpha \beta}=f_\beta$; this gives $(B(a)-\half A(a))A(b)$.
Next $f_\alpha=1$, $g_\alpha=0$ gives $y_{\alpha \beta}=f_\beta+g_\beta$; this gives $\half A(a)B(b)$.
The sum of these three quantities is equal to $A(a)B(b)+A(b)B(a)-\half A(a)A(b)$.

For the `$110$' case, we must have $f_\alpha f_\beta=0$ and so
 \beq
y_{\alpha\beta}= f_{\beta}g_{\alpha}. 
\eeq
So $(\alpha,\beta) \in \Bip_{\sgn^0}$ iff  $f_\alpha=0$ and $f_\beta=g_\alpha=1$.  This gives 
\beq
\left(B(a)-\half A(a) \right)A(b)
\eeq
 possibilities.

For both `$001$', we must have $f_\beta=0$, and $f_\alpha=g_\beta=1$, giving 
\beq
A(a)\left(B(b)-\half A(b)\right).
\eeq

For `$101$', to solve
\beq
y_{\alpha \beta}=f_\beta g_\alpha + f_\alpha g_\beta =1
\eeq
with $f_\alpha f_\beta=0$, there are two possibilities.  
Either $f_\alpha=0, f_\beta=g_\alpha=1$, or  $f_\beta=0, f_\alpha=g_\beta=1$.
The first possibility gives $(B(a)-\half A(a))A(b)$, and the second gives $(B(b)-\half A(b))A(a)$.  Adding gives the result.

  \end{proof}
 
 \subsection{Final Count}

 We compute $N_\eps(n)$ in Theorem \ref{N_eps(n)}, $N_{\sgn^0}(n)$ in Theorem \ref{N_sgn0(n)}, and $N_{\sgn^1}(n)$ in Theorem \ref{N_sgn1(n)}.  The quantity $N_1(n)$ can be computed in principle from the formula
 \beq
p_2(n)=N_1(n)+N_{\sgn^0}(n)+N_{\sgn^1}(n)+N_\eps(n),
\eeq
 where $p_2(n)$ denotes the number of bipartitions of $n$.

 \begin{theorem}
\label{N_eps(n)}
For $n \geq 2$, and $k=\ord(n)$, we have
\begin{equation*}
N_{\eps}(n)= \begin{cases}
    \frac{1}{4} A(2n)   &\text{if $n$ is odd}, \\
 \frac{1}{8}A(2n)  \left(2+\sum_{j=1}^{k-1}2^{\binom{k}{2}-\binom{j}{2}} \right)  & \text{if $n$ is even}.\\
  \end{cases}
\end{equation*}
\end{theorem}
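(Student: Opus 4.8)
The plan is to handle the odd case in one line and then reduce the even case to an explicit sum over binary subsets. For $n$ odd the asserted value $N_\eps(n)=\tfrac14 A(2n)$ is precisely Corollary \ref{sgn.eps}, so nothing remains. Assume henceforth $n$ is even and set $k=\ord(n)$. By Proposition \ref{N_eps}, $N_\eps(n)=\tfrac12\sum A(a)A(b)$, the sum running over all $a+b=n$ with $\binom{n-1}{a,b-1}$ odd and $\binom{n-2}{a-1,b-1}$ even. First I would rewrite these two parity conditions in the language of Section \ref{bin.notation}: the former says $a+(b-1)\doteq n-1$, and the latter says $\summ(a-1,b-1)$ is messy.

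Next I would show that, together, these are equivalent to the single condition that $a$ (equivalently $b$) be even while $a+(b-1)\doteq n-1$. Since $n-1$ is odd, a neat sum $a+(b-1)\doteq n-1$ forces exactly one of $a$ and $b-1$ to be odd. If $a$ were the odd one, then deleting its unit bit gives $\bin(a-1)=\bin(a)\setminus\{0\}$, still disjoint from $\bin(b-1)$, so $\summ(a-1,b-1)$ would be neat, contradicting the second condition; whereas if $a$ is even, the borrow in forming $a-1$ produces a unit bit shared with the odd number $b-1$, so $\summ(a-1,b-1)$ is automatically messy. Thus the valid pairs are exactly those with $a$ even and $a+(b-1)\doteq n-1$, and I would parametrize them by the set $T=\bin(a)$, a subset of $\bin(n-1)$ not containing $0$.

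The crux is evaluating the surviving sum. Writing $\bin(n-1)=\{0,1,\dots,k-1\}\cup H$, where $H=\bin(n)\setminus\{k\}$ collects the bits of $n$ above $k$, I would compute $A(a)$ and $A(b)$ separately by tracking the carries relating $a$, $b-1=(n-1)-a$, and $b=(b-1)+1$. After this borrow/carry bookkeeping the key output is the clean identity $A(a)A(b)=2^{\binom{k}{2}+\sigma_H+t-\binom{t}{2}}$, where $\sigma_H=\sum_{h\in H}h$ and $t$ is the least element of $T\cap\{1,\dots,k-1\}$ (with $t=k$ when that intersection is empty). The decisive feature is a cancellation: the exponent depends neither on the high bits $T\cap H$ nor on the non-minimal low bits of $T$, only on $t$. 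I expect this to be the main obstacle, since one must verify that all contributions to the exponents of $A(a)$ and $A(b)$ except those recorded above cancel.

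Granting this identity, the summation is routine. The high part $T\cap H$ ranges freely over $2^{|H|}=2^{\nu(n)-1}$ subsets without changing the summand, and grouping the low part by its minimum gives one subset with $t=k$ and, for each $j=1,\dots,k-1$, exactly $2^{k-1-j}$ subsets with $t=j$. Collecting terms yields
\beq
\sum A(a)A(b)=2^{\nu(n)-1}\,2^{\sigma_H}\,2^{k-1}\left(2+\sum_{j=1}^{k-1}2^{\binom{k}{2}-\binom{j}{2}}\right).
\eeq
Finally I would identify the prefactor via Lemma \ref{alpha.increment}: since $\bin(2n)=\{k+1\}\cup\{h+1:h\in H\}$ one gets $A(2n)=2^{k+\sigma_H+\nu(n)}$, whence $\tfrac12\cdot 2^{\nu(n)-1}2^{\sigma_H}2^{k-1}=\tfrac18 A(2n)$, giving $N_\eps(n)=\tfrac18 A(2n)\bigl(2+\sum_{j=1}^{k-1}2^{\binom{k}{2}-\binom{j}{2}}\bigr)$ as claimed.
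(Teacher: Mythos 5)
Your proposal is correct and follows essentially the same route as the paper: the paper likewise starts from Proposition \ref{N_eps}, reduces to the pairs with $a,b$ even and $a+(b-1)\doteq n-1$, groups them by $\ord(b)$ (your $t$), counts $2^{\nu(n)-1}$ pairs with $t=k$ and $2^{\nu(n)+k-j-2}$ pairs with $t=j<k$, and then sums. The one step you flag as the main obstacle, $A(a)A(b)=2^{\binom{k}{2}+\sigma_H+t-\binom{t}{2}}$, is indeed true but needs no carry bookkeeping: by Lemma \ref{alpha.increment} parts (1) and (3), $A(a)A(b)=A(a)A(b-1)\,2^{\ord(b)-\binom{\ord(b)}{2}}=A(n-1)\,2^{t-\binom{t}{2}}$, and $A(n-1)=2^{\binom{k}{2}+\sigma_H}$, which is exactly how the paper proceeds.
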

 
 Note that the sum is $0$ when $k=1$, and similarly for other sums in this paper.
 
\begin{proof}
If $n$ is odd, this follows from Corollary \ref{sgn.eps}, so let $n$ be even.
 
By Proposition \ref{N_eps}, we have
\beq
N_\eps(n) = \sum \limits_{\substack{a+(b-1) \doteq n-1 \\ \summ(a-1,b-1) \messy}} \half A(a)A(b).
\eeq
If $a$ and $b$ are odd, then $\summ(a-1,b-1)$ is neat iff $\summ(a,b-1)$ is neat, so $N_{\eps}(a,b)=0$.   If $a$ and $b$ are even, then $\summ(a-1,b-1)$ is automatically messy.  Hence,
\begin{equation} \label{value}
\begin{split}
N_{\eps}(n) & =\sum_{\substack{a+(b-1) \doteq n-1 \\ a,b \text{ even}}}\frac{1}{2}A(a)A(b)\\
&=\sum_{\substack{a+(b-1) \doteq n-1 \\ a,b \text{ even}}} \frac{1}{2}A(a)A(b-1)2^{\ord(b)-{{\ord(b)}\choose{2}}}\\
&=\frac{1}{2}A(n-1)\sum_{\substack{a+(b-1) \doteq n-1 \\ a,b \text{ even}}}2^{\ord(b)-{{\ord(b)}\choose{2}}}.\\
\end{split}
\end{equation}

Now   $\summ(a,b-1)$ is neat iff  $\ord(a) \geq \ord(b)$ and $\summ(a,b-2^{\ord(b)})$ is neat.   There are two cases to consider.  First if $\ord(b)=k$, then we must also have $k<\ord(a)$.  The sum of these terms is
\beq
   \frac{1}{2}A(n-1) 2^{\nu(n-2^k)+ k- \binom{k}{2}}.
   \eeq
  The other case has $\ord(a)=\ord(b)<k$, the sum over these $a,b$ is
 \beq
  \frac{1}{2}A(n-1) \sum_{j=1}^{k-1} 2^{\nu(n-2^{j+1})+ j -\binom{j}{2}}.
  \eeq
   
   Further simplification comes by using (\ref{nu.formula}):
   \beq 
N_{\eps}(n)= \left(   \half A(n-1)2^{\nu(n)+k-2} \sum_{j=1}^{k-1} 2^{- \binom{j}{2}} \right) + \half A(n-1)2^{\nu(n)+k-1-\binom{k}{2}}.
\eeq
  Now 
  \beq
  A(n-1)=A(n) 2^{-k+\binom{k}{2}},
  \eeq
  so we may rewrite this as
     \beq 
     \begin{split}
N_{\eps}(n) &=  \left(   \half A(n) 2^{\nu(n)-2}  \sum_{j=1}^{k-1} 2^{\binom{k}{2}- \binom{j}{2}} \right) + \half A(n)2^{\nu(n)-1} \\
			&=   \left(   \frac{1}{8} A(2n) \sum_{j=1}^{k-1} 2^{\binom{k}{2}- \binom{j}{2}} \right)             +             \frac{1}{4}A(2n). \\
			\end{split}
\eeq

If $k=1$, then the sum is empty, giving $N_{\eps}(n)= \frac{1}{4}A(2n)$.

\end{proof}

\begin{theorem}
The following computes $N_{\sgn^0}(n)$ in all cases.  Let $k=\ord(n')$.
\label{N_sgn0(n)}
\begin{enumerate}
    \item If  $n \equiv 1 \mod 4$, then
    \beq
N_{\sgn^0}(n) =\frac{1}{4} A(2n) \left( 1+3 \cdot 2^{\binom{k}{2}-1}+2^{\binom{k}{2}-k+1}+ \sum_{j=2}^{k-1} \left(2^{\binom{k}{2}-\binom{j}{2}}+2^{\binom{k}{2}-j} \right)\right).
\eeq    
    \item If  $n \equiv 3 \mod 4$, then   $ N_{\sgn^0}(n) =  \half A(2n)$. 
    \item If  $n$ is even, then
    \beq
N_{\sgn^0}(n) =     \frac{1}{8}A(2n)  \left(2+\sum_{j=1}^{k-1}2^{\binom{k}{2}-\binom{j}{2}} \right).
 \eeq
        
\end{enumerate}
\end{theorem}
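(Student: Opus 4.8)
The plan is to start from $N_{\sgn^0}(n)=\sum_{a+b=n}N_{\sgn^0}(a,b)$ and substitute the eight-case table of Proposition \ref{N_sgn0}, which writes each $N_{\sgn^0}(a,b)$ as an integer combination of $A(a)B(b)$, $A(b)B(a)$, and $A(a)A(b)$ with coefficients read off from the parities $c_1=\binom{n-2}{a-2,b}$, $c_2=\binom{n-2}{a-1,b-1}$, $c_3=\binom{n-2}{a,b-2}$ taken modulo $2$. Inspecting the table, the coefficient of $A(a)B(b)$ is exactly $c_3$ and that of $A(b)B(a)$ is exactly $c_1$, while the coefficient of $A(a)A(b)$ is $-\gamma(a,b)$ for a code-dependent $\gamma\in\{0,\tfrac12,1\}$. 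Since the multinomial $\binom{n-2}{x,y}$ is symmetric in $x,y$, one has $c_1(b,a)=c_3(a,b)$, so relabeling $(a,b)\leftrightarrow(b,a)$ identifies the total $A(b)B(a)$-contribution with the total $A(a)B(b)$-contribution. This gives the basic reduction
\beq
N_{\sgn^0}(n)=2S-C,\qquad S=\sum_{\substack{a+b=n\\ c_3\ \text{odd}}}A(a)B(b),\qquad C=\sum_{a+b=n}\gamma(a,b)\,A(a)A(b).
\eeq

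Next I would carry out the binary bookkeeping that decides which codes occur, which is where the residue of $n$ enters. The conditions translate to neatness of decompositions of $n-2$: $c_1$ odd iff $\summ(a-2,b)$ is neat, $c_2$ odd iff $\summ(a-1,b-1)$ is neat, $c_3$ odd iff $\summ(a,b-2)$ is neat. Examining the units bit prunes the cases drastically. For $n$ even, a pair with $a,b$ both odd forces a carry in both $\summ(a-2,b)$ and $\summ(a,b-2)$, so $c_1=c_3=0$ and only the vanishing codes $000,010$ appear; hence only $a,b$ even contribute, and for these $c_2=0$, leaving codes $000,100,001,101$. For $n$ odd exactly one of $a,b$ is even, and the surviving codes are again cut down; here the decisive invariant is $k=\ord(n')$, since $n\equiv 3\pmod 4$ is precisely the case $k=1$ (whence the domino range $1\le j\le k-1$ is empty), whereas $n\equiv 1\pmod 4$ gives $k\ge 2$ and a genuine domino sum. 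This is why the statement splits into the three cases rather than merely even/odd.

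With the surviving codes identified, I would evaluate $S$ and $C$ with the arithmetic of $A$ and $B$. The basic tools are Lemma \ref{alpha.increment} (in particular $A(2m)=2^{\nu(m)}A(m)$ and the explicit value of $A(m-2^{j})$), the identity $\sum_{a+b\doteq m}A(a)A(b)=2^{\nu(m)}A(m)=A(2m)$ obtained by summing Proposition \ref{merge1}(1) over the $2^{\nu(m)}$ neat splittings of $m$, and the closed form $B(b)=\half A(b)+D(b)+\epsilon\,A(b-3)$ with $D(b)=\sum_j D_j(b)$ and $D_j$ given explicitly by (\ref{lute}). Writing $c_3$ odd as $a+(b-2)\doteq n-2$ and inserting this expansion of $B(b)$ turns $S$ into a double sum over a neat split of $n-2$ and over the row index $j$ of the unique nonempty domino; reindexing via (\ref{nu.formula}) and collecting powers of $2$ produces the geometric sum $\sum_j 2^{\binom{k}{2}-\binom{j}{2}}$ of the statement, with the forced low bits (set by $n\bmod 4$) contributing the isolated terms $3\cdot 2^{\binom{k}{2}-1}$, $2^{\binom{k}{2}-k+1}$, and $2^{\binom{k}{2}-j}$ in the $n\equiv 1\pmod 4$ formula. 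The correction $C$ is a pure $A$-sum, handled directly by the neat-splitting identity and Proposition \ref{merge1}(2),(3).

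The main obstacle is precisely this simplification of the double sum for $S$: one must track simultaneously the carries in $a+(b-2)$ and the position $j$ of the domino in $T_b$, and it is the interaction between the bits of $a,b$ that are pinned down by $n\bmod 4$ and the high bits that remain free which separates the three cases. For $n\equiv 3\pmod 4$ the domino range is empty and the configurations collapse to $\half A(2n)$; for $n\equiv 1\pmod 4$ the extra low-bit terms survive and assemble into case (1); and for $n$ even the computation runs parallel to that of Theorem \ref{N_eps(n)}, so the totals coincide and recover the identity $N_{\sgn^0}(n)=N_{\eps}(n)$ announced in the introduction, for which no conceptual proof is available.
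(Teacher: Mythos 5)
Your plan is, in outline, the paper's own proof: substitute the eight-case table of Proposition \ref{N_sgn0}, sort the pairs $(a,b)$ by the parity code of $\bigl(\binom{n-2}{a-2,b},\binom{n-2}{a-1,b-1},\binom{n-2}{a,b-2}\bigr)$, prune the codes using the low binary digits of $a,b$ (which is where $n \bmod 4$ enters), and evaluate the surviving sums via the decomposition of $B$ into $\half A+D+(\text{hook-core})$ together with Proposition \ref{merge1}, (\ref{lute}) and (\ref{nu.formula}). Your structural claims check out: the coefficient of $A(a)B(b)$ in the table is indeed $c_3$, that of $A(b)B(a)$ is $c_1$, so the symmetrization $N_{\sgn^0}(n)=2S-C$ is valid; it is a clean packaging of what the paper does piecemeal (it combines code $100$ with $001$ and $011$ with $110$, as in (\ref{100.001}) and (\ref{011.110}), and uses $N_{\sgn^0}(a,b)=N_{\sgn^0}(b,a)$ inside code $111$). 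The code-pruning and the explanation of why the statement splits into three cases ($k=1$ exactly when $n\equiv 3 \bmod 4$) are also correct.

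The one step you cannot take as written is the even case: ``the computation runs parallel to that of Theorem \ref{N_eps(n)}, so the totals coincide'' is not an argument, and it flirts with circularity, since the coincidence $N_{\sgn^0}(n)=N_\eps(n)$ for even $n$ (Corollary \ref{eps=sgn^0}) is precisely the non-conceptual identity that only emerges after the two sides are computed independently. The two computations are not term-by-term parallel: for $N_\eps(n)$ one sums $\half A(a)A(b)$ over splittings with $a+(b-1)\doteq n-1$ and $\summ(a-1,b-1)$ messy, whereas here the surviving codes $100$, $001$, $101$ produce summands $A(b)B(a)$, $A(a)D(b)$, and a genuine domino sum over pairs $a+b\domeq n$; only after pushing each through Proposition \ref{merge1} and (\ref{lute}) do both collapse to $\frac18 A(2n)\bigl(2+\sum_{j=1}^{k-1}2^{\binom{k}{2}-\binom{j}{2}}\bigr)$. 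So for case (3) you must run your $2S-C$ machinery to the end, exactly as in the odd cases. A small further slip: the correction $C$ is a pure $A$-sum, so it is handled by Proposition \ref{merge1}(1) and Lemma \ref{alpha.increment}(3), not by Proposition \ref{merge1}(2),(3); those parts, which concern $A(a)D(b)$, are what you need inside $S$ once $B$ is expanded.
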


\begin{proof}

{\bf Case $n \equiv 2 \mod 4$:}  The codes `$011$', `$111$', `$101$', and `$110$' cannot occur in the $(n-2)$nd row of Pascal's triangle mod $2$. Moreover for $a,b$ corresponding to the codes `$000$' and `$010$' we have
$N_{\sgn^0}(a,b)=0$.  So we are left with the codes  `$100$' and `$001$'.

Let us write 
 
\beq
N_{\sgn^0}(n)^{100}=\sum \limits_{\substack{\summ(a-2,b) \text{ neat} \\ \summ(a-1,b-1) \text{ messy} \\ \summ(a,b-2) \text{ messy}}} N_{\sgn^0}(a,b),
\eeq
and similarly for other codes.  From Proposition \ref{N_sgn0}, $N_{\sgn^0}(a,b)=A(b)B(a)$. 
In this case, since $n-2 \equiv 0 \mod 4$, the first neatness condition on $a-2,b$ implies the other two.  Moreover it implies that $a \equiv 2 \mod 4$, so that $B(a)=\half A(a)$ and $A(a-2)=\half A(a)$.
It follows that
\beq
\begin{split}
N_{\sgn^0}(n)^{100} &= \sum_{(a-2)+b \doteq n-2}\half A(a)A(b) \\
			&=  \sum_{(a-2)+b \doteq n-2} A(a-2)A(b) \\
			&=2^{\nu(n-2)}A(n-2) \\
			&=\frac{1}{4}A(2n). \\
			\end{split}
			\eeq
Next, the condition that $\summ(a,b-2)$ is neat implies that $b \equiv 2 \mod 4$, so $B(b)=\half A(b)$.  Thus

 \beq
\begin{split}
N_{\sgn^0}(n)^{001} &= \sum_{a+(b-2) \doteq n-2} A(a)B(b)-\half A(a)A(b) \\
			&=0.\\
			\end{split}
			\eeq

We conclude that $N_{\sgn^0}(n)=\frac{1}{4} A(2n)$.

\bigskip

{\bf Case $n \equiv 3 \mod 4$:}  This time, the codes `$111$' and `$101$' do not occur in the $(n-2)$nd row of Pascal's triangle mod $2$.  It follows that
\beq
N_{\sgn^0}(n)=N_{\sgn^0}(n)^{100}+N_{\sgn^0}(n)^{011}+N_{\sgn^0}(n)^{110}+N_{\sgn^0}(n)^{001}.
\eeq
Now
\begin{equation} \label{100.001}
N_{\sgn^0}(n)^{100}+N_{\sgn^0}(n)^{001}=\sum \limits_{\substack{\summ(a-2,b) \text{ neat} \\ \summ(a-1,b-1) \text{ messy} \\ \summ(a,b-2) \text{ messy}}} 2A(b)B(a)-\half A(a)A(b),
\end{equation}
and
\begin{equation} \label{011.110}
N_{\sgn^0}(n)^{011}+N_{\sgn^0}(n)^{110}=\sum \limits_{\substack{\summ(a-2,b) \text{ messy} \\ \summ(a-1,b-1) \text{ neat} \\ \summ(a,b-2) \text{ neat}}} 2A(a)B(b)-\half A(a)A(b).
\end{equation}

Next, the conditions on $a$ and $b$ for the sum in (\ref{100.001}) are equivalent to $(a-2)+b \doteq n-2$, $a \equiv 3 \mod 4$, and $b \equiv 0 \mod 4$.  But then $B(a)=A(a)$, and $A(a)=2A(a-2)$.  So (\ref{100.001}) becomes
\beq
3 \cdot 2^{\nu(n)-3}A(n).
\eeq

Meanwhile, the conditions in (\ref{011.110}) are equivalent to $(a-1)+(b-1) \doteq n-2$, $a \equiv 1 \mod 4$, and $b \equiv 2 \mod 4$.  Now $B(b)=\half A(b)$ and $A(b)=2A(b-2)$.  So  (\ref{011.110}) becomes
\beq
2^{\nu(n)-3}A(n).
\eeq

The result follows, since $A(n)2^{\nu(n)}=A(2n)$.

\bigskip

{\bf Case $n \equiv 1 \mod 4$:}
This time we have
\beq
N_{\sgn^0}(n)=N_{\sgn^0}(n)^{100}+N_{\sgn^0}(n)^{011}+N_{\sgn^0}(n)^{111}+N_{\sgn^0}(n)^{110}+N_{\sgn^0}(n)^{001}.
\eeq
Again, Equations (\ref{100.001}) and (\ref{011.110}) hold, but we must add
\beq
N_{\sgn^0}(n)^{111} = \sum \limits_{\substack{\summ(a-2,b) \text{ neat} \\ \summ(a-1,b-1) \text{ neat} \\ \summ(a,b-2) \text{ neat}}} A(a)B(b)+A(b)B(a)-\half A(a)A(b).
\eeq

Now 
\beq
N_{\sgn^0}(n)^{111}=N_{\sgn^0}(n)_{0}^{111}+N_{\sgn^0}(n)_{1}^{111}+N_{\sgn^0}(n)_{2}^{111}+N_{\sgn^0}(n)_{3}^{111},
\eeq
 where $N_{\sgn^0}(n)_{i}^{111}$ takes the sum over $a \equiv i \mod 4$.  But since $N_{\sgn^0}(a,b)=N_{\sgn^0}(b,a)$ for such $a,b$, we also have  $N_{\sgn^0}(n)_3^{111}=N_{\sgn^0}(n)_2^{111}$ and  $N_{\sgn^0}(n)_0^{111}=N_{\sgn^0}(n)_1^{111}$.
 
We have
\beq
\begin{split}
 N_{\sgn^0}(n)_2^{111}& = \sum \limits_{\substack{\summ(a-2,b) \text{ neat} \\ a \equiv 2 \mod 4 }} A(a)A(b) \\
 &= 2A(n-2) 2^{\nu(n-5)} \\
 &= 2^{\binom{k}{2}-3}A(2n),
 \end{split}
 \eeq
 since $B(a)=\half A(a)$, $B(b)=A(b)$, and $A(a)=2A(a-2)$.

On the other hand, putting $k=\ord(n')$,
\beq
N_{\sgn^0}(n)_0^{111}=\sum_{j=2}^{k-1} \sum \limits_{\substack{(a-2)+b \doteq n\\  \ord(a)=j}} A(a)B(b)+A(b)B(a)-\half A(a)A(b).
\eeq

Note that here, $\ord(a)=\ord(b-1)=j$.
So $N_{\sgn^0}(n)_0^{111}$ equals
\beq
\sum_{j=2}^{k-1} \sum \limits_{\substack{(a-2)+b \doteq n\\  \ord(a)=j}} A(a)A(b)2^{\binom{j}{2}-j}+A(a)D(b)+\half A(a)A(b)+A(b)D(a).
\eeq
Now $a+b \domeq n$ with $\ord(a)=\ord(b)=j$, so by Proposition \ref{merge1}, $A(a)D(b)=A(b)D(a)=D_{<j}(n)$, so this gives
\beq
\sum_{j=2}^{k-1} \sum \limits_{\substack{(a-2)+b \doteq n-2\\  \ord(a)=j}} A(a)A(b)\left(\half + 2^{\binom{j}{2}-j} \right)+2D_{<j}(n).
\eeq
Since $A(a-2)=2^{-j+\binom{j}{2}}A(a)$, we get
\beq
\sum_{j=2}^{k-1} \sum \limits_{\substack{(a-2)+b \doteq n-2\\  \ord(a)=j}} A(n-2)2^{j-\binom{j}{2}}\left(\half + 2^{\binom{j}{2}-j} \right)+2D_{<j}(n),
\eeq
which equals
\beq
\sum_{j=2}^{k-1} 2^{\nu(n)+k-j-3} \left[ A(n-2)2^{j-\binom{j}{2}}\left(\half + 2^{\binom{j}{2}-j} \right)+2D_{<j}(n) \right].
\eeq
and then
\beq
A(n) 2^{-k+\binom{k}{2}} \sum_{j=2}^{k-1} 2^{\nu(n)+k-j-3} \left[  2^{j-\binom{j}{2}}\left(\half + 2^{\binom{j}{2}-j} \right)\right] + \sum_{j=2}^{k-1}2^{\nu(n)+k-j-2}     D_{<j}(n).
\eeq
The first sum here is
\beq
A(2n) 2^{\binom{k}{2}-3} \sum_{j=2}^{k-1} \left( 2^{-\binom{j}{2}-1}+2^{-j}  \right),
\eeq
which equals $0$ if $k <3$. 

The second sum is
\beq
\begin{split}
2^{\nu(n)-2} \sum_{j=2}^{k-1} 2^{k-j}D_{<j}(n) &= 2^{\nu(n)-1} \sum_{i=1}^{k-1}(2^{k-i-1}-1) D_i(n) \\
		&= \frac{1}{16}A(2n)  \left(\sum_{j=1}^{k-1} 2^{\binom{k}{2}-\binom{j}{2}} \right)-2^{\nu(n)-1}D(n). 
		\end{split}
\eeq

Therefore if $k \geq 3$ we have

\beq
N_{\sgn^0}(n)^{111}_0=A(2n) 2^{\binom{k}{2}} \sum_{j=2}^{k-1} \left(2^{-\binom{j}{2}-4}+2^{-j-3} \right) +\frac{1}{16}A(2n)  \left(\sum_{j=1}^{k-1} 2^{\binom{k}{2}-\binom{j}{2}} \right)-2^{\nu(n)-1}D(n);
\eeq
note that it is $0$ if $k=2$.

From before,
\beq
 N_{\sgn^0}(n)^{111} =2N_{\sgn^0}(n)^{111}_0+2N_{\sgn^0}(n)^{111}_2.
 \eeq
 If $k=2$, then 
 \beq
 N_{\sgn^0}(n)^{111} =\half A(2n).
 \eeq

If $k \geq 3$, then $N_{\sgn^0}(n)^{111}$ equals
\beq
2^{\binom{k}{2}-2}A(2n)+A(2n) 2^{\binom{k}{2}} \sum_{j=2}^{k-1} \left(2^{-\binom{j}{2}-3}+2^{-j-2} \right) +\frac{1}{8}A(2n)  \left(\sum_{j=1}^{k-1} 2^{\binom{k}{2}-\binom{j}{2}} \right)-2^{\nu(n)}D(n).
\eeq

Meanwhile,
\beq
N_{\sgn^0}(n)^{011+110}=\sum \limits_{\substack{\summ(a-2,b) \text{ messy} \\ \summ(a-1,b-1) \text{ neat} \\ \summ(a,b-2) \text{ neat}}} 2A(a)B(b)-\half A(a)A(b).
\eeq
The conditions on $a,b$ imply that $\summ(a,b)$ is neat, $a \equiv 1 \mod 4$, $b \equiv 0 \mod 4$, and $k \in \bin(b)$.  So
\beq
\begin{split}
2A(a)B(b)-\half A(a)A(b) &=2A(a)(D(b)+\half A(b))-\half A(a)A(b) \\
&=2D(n)+\half A(n). \\
\end{split}
\eeq
The number of $a,b$ satisfying this condition is $2^{\nu(n)-2} $, so we have
\beq
N_{\sgn^0}(n)^{011+110}=2^{\nu(n)-2} \left(2D(n)+\half A(n) \right).
\eeq

Next,

 \beq 
N_{\sgn^0}(n)^{100+001} =\sum \limits_{\substack{\summ(a-2,b) \text{ neat} \\ \summ(a-1,b-1) \text{ messy} \\ \summ(a,b-2) \text{ messy}}} 2A(b)B(a)-\half A(a)A(b).
\eeq
 The conditions on $a,b$ again imply that $\summ(a,b)$ is neat, $a \equiv 1 \mod 4$, $b \equiv 0 \mod 4$, and $k \in \bin(a)$.  So
 \beq
 \begin{split}
 2A(b)B(a)-\half A(a)A(b) &=\half A(n)+2^{\binom{k}{2}-k+1}A(n)+2D(n).
 \end{split}
 \eeq
 The number of $a,b$ satisfying this condition is again $2^{\nu(n)-2} $, so we have
\beq
N_{\sgn^0}(n)^{100+001}=2^{\nu(n)-2} \left(\half A(n)+2^{\binom{k}{2}-k+1}A(n)+2D(n) \right).
 \eeq

Thus
\beq
\begin{split}
N_{\sgn^0}(n)^{011+110+100+001} &=2^{\nu(n)-2} \left(A(n)+2^{\binom{k}{2}-k+1}A(n)+4D(n) \right) \\
\end{split}
\eeq

We obtain $N_{\sgn^0}(n)=\frac{5}{4}A(2n)$, if $k=2$.  For $k \geq 3$ we finally obtain
\beq
N_{\sgn^0}(n) =A(2n) \left( \frac{1}{4}+3 \cdot 2^{\binom{k}{2}-3}+2^{\binom{k}{2}-k-1}+ \sum_{j=2}^{k-1} \left(2^{\binom{k}{2}-\binom{j}{2}-2}+2^{\binom{k}{2}-j-2} \right)\right).
\eeq

{\bf Case $n \equiv 0 \mod 4$:}
Now
 \beq
 N_{\sgn^0}(n)= N_{\sgn^0}(n)^{100}+ N_{\sgn^0}(n)^{001}+ N_{\sgn^0}(n)^{101}.
 \eeq
We have
 \beq
 N_{\sgn^0}(n)^{100}=\sum\limits_{\substack{(a-2)+b \doteq n-2 \\   \summ(a,b-2)\text{ messy}}} A(b)B(a),
 \eeq
 
 \beq
 N_{\sgn^0}(n)^{001}=\sum\limits_{\substack{a+(b-2) \doteq n-2 \\   \summ(a-2,b)\text{ messy}}} A(a)D(b).
 \eeq
 
 Since $B(n)=\half A(n)+D(n)$ for $n$ even, we may write
 
 \beq
 N_{\sgn^0}(n)^{101}=\sum\limits_{\substack{(a-2)+b \doteq n-2 \\   a+(b-2) \doteq n-2}} A(a)D(b)+A(b)D(a).
 \eeq

  So
 \beq
 N_{\sgn^0}(n)^{100}=\sum\limits_{\substack{a+b \doteq n \\   k \in \bin(a) }} A(b) B(a),
 \eeq
 and
  \beq
 N_{\sgn^0}(n)^{001}=\sum\limits_{\substack{a+b \doteq n \\   k \in \bin(b) }}A(a)D(b).
 \eeq
 Thus
 \begin{equation} \label{peter}
 \begin{split}
  N_{\sgn^0}(n)^{100}+ N_{\sgn^0}(n)^{001} &= \sum\limits_{\substack{a+b \doteq n \\   k \in \bin(b) }}A(a) \left(2D(b)+\half A(b) \right) \\
  &=  \sum\limits_{\substack{a+b \doteq n \\   k \in \bin(b) }} 2D(n)+ \half A(n) \\
  &=  2^{\nu(n)} \left( D(n)+ \frac{1}{4} A(n) \right).\\
  \end{split}
  \end{equation}
  We have used Proposition \ref{merge1}  for  the equality $A(a)D(b)=D(n)$.

  \begin{equation} \label{donuts}
  \begin{split}
 N_{\sgn^0}(n)^{101}&=2\sum\limits_{\substack{(a-2)+b \doteq n-2 \\   a+(b-2) \doteq n-2}} A(a)D(b) \\
 				&=2 \sum_{j=2}^{k-1}\sum\limits_{\substack{a+b \domeq n \\   \ord(b)=j}} A(a)D(b) \\ 
				&=2 \sum_{j=2}^{k-1} 2^{\nu(n-2^{j+1})} D_{< j}(n) \\ 
				&=2  \sum_{j=2}^{k-1} 2^{\nu(n)+k-j-2} D_{< j}(n) \\
				&= 2^{\nu(n)-1}  \sum_{j=2}^{k-1} 2^{k-j} D_{< j}(n) \\
				&=2^{\nu(n)} \sum_{i=1}^{k-1} (1+2+ \cdots + 2^{k-i-2})D_i(n).
				\end{split}
 \end{equation}
 
 We have again used (\ref{nu.formula}).  Therefore
\beq
\begin{split}
   N_{\sgn^0}(n) &= 2^{\nu(n)} \left(\sum_{j=1}^{k-1} (1+2+ \cdots + 2^{k-j-2})D_j(n) + \left(\sum_{j=1}^{k-1} D_j(n) \right) + \frac{1}{4}A(n) \right) \\
   &=2^{\nu(n)}\left(\frac{1}{4}A(n) + \sum_{j=1}^{k-1}2^{k-j-1}D_j(n) \right) \\
   &= 2^{\nu(n)}A(n) \left(\frac{1}{4} + \sum_{j=1}^{k-1}2^{\binom{k}{2}-\binom{j}{2}-3} \right)\\
   &=\frac{1}{8}A(2n) \left(2+  \sum_{j=1}^{k-1}2^{\binom{k}{2}-\binom{j}{2}} \right). \\
   \end{split}
   \eeq

\end{proof}

\begin{cor} \label{eps=sgn^0} When $n$ is even, we have $N_\eps(n)=N_{\sgn^0}(n)$.
\end{cor}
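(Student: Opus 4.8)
The plan is to obtain both quantities directly from the closed formulas already established and to observe that they coincide, since no conceptual argument via a symmetry of $\Bip(n)$ appears to be available in the even case. First I would note that because $n$ is even we have $\epsilon=0$, so $n'=n$ and hence $k=\ord(n)=\ord(n')$; this reconciles the two slightly different conventions for $k$ used in the statements of Theorem~\ref{N_eps(n)} and Theorem~\ref{N_sgn0(n)}, and ensures that the parameter appearing in the two formulas is literally the same.

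Next I would invoke the even case of Theorem~\ref{N_eps(n)}, which gives
\[
N_{\eps}(n)=\frac{1}{8}A(2n)\left(2+\sum_{j=1}^{k-1}2^{\binom{k}{2}-\binom{j}{2}}\right),
\]
together with part (3) of Theorem~\ref{N_sgn0(n)}, the case $n$ even, which yields the identical expression
\[
N_{\sgn^0}(n)=\frac{1}{8}A(2n)\left(2+\sum_{j=1}^{k-1}2^{\binom{k}{2}-\binom{j}{2}}\right).
\]
Comparing the two right-hand sides then gives $N_{\eps}(n)=N_{\sgn^0}(n)$ with no further computation.

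The main obstacle is not in the corollary itself, which reduces to a term-by-term comparison, but in the fact that the agreement is not transparent. As remarked in the introduction, a ``pure thought'' proof analogous to the involution $\sigma$ used in Corollary~\ref{eps=sgn1} for the odd case is not apparent here: the natural attempt would be to build a bijection $\Bip_{\eps}(n)\to\Bip_{\sgn^0}(n)$ from an involution of $\Bip(n)$, but no such map seems to respect the two determinant conditions. Consequently the substantive work lives entirely in the independent derivations of the two closed formulas, and the present corollary records only that the resulting expressions happen to match.
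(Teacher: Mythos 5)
Your proof is correct and is exactly the paper's own argument: the corollary is stated immediately after Theorem~\ref{N_sgn0(n)} with no further proof, precisely because the even-case formulas of Theorem~\ref{N_eps(n)} and Theorem~\ref{N_sgn0(n)}(3) coincide term by term (with $k=\ord(n)=\ord(n')$ since $\epsilon=0$). Your closing remark about the absence of a bijective ``pure thought'' proof also matches the paper's own comment in the introduction.
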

\qedsymbol
\bigskip

 \begin{theorem}
\label{N_sgn1(n)}
For $n \geq 2$, and $k=\ord(n)$, we have
\begin{equation*}
N_{\sgn^1}(n)= \begin{cases}
\frac{1}{4}A(2n) &\text{if } n \text{ is odd} \\
\frac{1}{8}A(2n) \left( 2+2^k+ \sum_{j=1}^{k-1} 2^{\binom{k}{2}-\binom{j}{2}}(2^j-1) \right) & \text{if $n$ is even}.\\
  \end{cases}
\end{equation*}
\end{theorem}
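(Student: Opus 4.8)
The plan is to split on the parity of $n$ and reduce everything to quantities already in hand, rather than re-summing $N_{\sgn^1}(a,b)$ from Proposition \ref{N_sgn1} directly. When $n$ is odd the statement is immediate: since $n\geq 2$ and $n$ odd force $n\geq 3$, Corollary \ref{sgn.eps} gives $N_{\sgn^1}(n)=\frac14 A(2n)$ with no further work. So the entire content is the even case.

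For $n$ even I would avoid a fresh computation and instead invoke Corollary \ref{x.cor}, which asserts $N_\eps(n)+N_{\sgn^1}(n)=B(2n)$, so that $N_{\sgn^1}(n)=B(2n)-N_\eps(n)$. Both terms on the right are now closed forms: $N_\eps(n)$ is the even case of Theorem \ref{N_eps(n)}, namely $\frac18 A(2n)\bigl(2+\sum_{j=1}^{k-1}2^{\binom{k}{2}-\binom{j}{2}}\bigr)$, and $B(2n)$ comes from the closed formula for $B$ (the corollary to \cite[Theorem 1]{APS-chiral}). The one point requiring attention when reading off $B(2n)$ is that its relevant order parameter is $\ord((2n)')=\ord(2n)=k+1$, and its $\epsilon$ is $0$ because $2n$ is even. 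Substituting $k+1$ and using $\binom{k+1}{2}=\binom{k}{2}+k$ collapses each exponent to $\binom{k}{2}-\binom{j}{2}+j-3$; peeling off the $j=k$ term then contributes an isolated $2^{k-3}$ and leaves a sum running from $j=1$ to $k-1$, so that $B(2n)=A(2n)\bigl(\frac12+2^{k-3}+\sum_{j=1}^{k-1}2^{\binom{k}{2}-\binom{j}{2}+j-3}\bigr)$.

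The final step is then a direct subtraction over the common factor $\frac18 A(2n)$: the constants combine as $4-2=2$, the isolated term gives $2^{k}$, and the two surviving sums merge term by term via $2^{\binom{k}{2}-\binom{j}{2}+j}-2^{\binom{k}{2}-\binom{j}{2}}=2^{\binom{k}{2}-\binom{j}{2}}(2^{j}-1)$, yielding exactly $\frac18 A(2n)\bigl(2+2^{k}+\sum_{j=1}^{k-1}2^{\binom{k}{2}-\binom{j}{2}}(2^{j}-1)\bigr)$. The only place I expect to demand care is the bookkeeping of $B(2n)$ under the index shift $k\mapsto k+1$ together with the $\binom{k+1}{2}=\binom{k}{2}+k$ reduction; once that is done correctly, no auxiliary identity is needed and the two closed forms subtract cleanly. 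As a consistency check one can test $n=2,4,6,8$ against the table: for instance at $n=8$ one has $k=3$ and $\frac18 A(16)(2+8+20)=60$. (An alternative route, parallel to the proofs of Theorems \ref{N_eps(n)} and \ref{N_sgn0(n)}, would sum $N_{\sgn^1}(a,b)$ from Proposition \ref{N_sgn1}, splitting the nonzero contributions into the $\binom{n-2}{a-1,b-1}$-odd part, which tower-merging evaluates to $2^{\binom{k}{2}-2}A(2n)$, plus a part that coincides verbatim with $N_\eps(n)$; this reproduces the same answer but requires the extra evaluation of the merged sum.)
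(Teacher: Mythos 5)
Your proposal is correct and follows the paper's own proof exactly: the odd case is Corollary \ref{sgn.eps}, and the even case is the subtraction $N_{\sgn^1}(n)=B(2n)-N_\eps(n)$ via Corollary \ref{x.cor} together with Theorem \ref{N_eps(n)} and the closed formula for $B$. The paper leaves the index shift $\ord((2n)')=k+1$ and the resulting arithmetic implicit, whereas you carry it out explicitly (and correctly), so your write-up is simply a more detailed version of the same argument.
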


\begin{proof} For $n$ odd, this is Corollary \ref{sgn.eps}.  Otherwise this follows from the identity $N_{\eps}(n)+N_{\sgn^1}(n)=B(2n)$, and the formulas for $N_{\eps}(n)$ and $B(2n)$.
\end{proof}

\subsection{Inequalities}
 In this section, we discuss how to establish, for $n \geq 10$:
 \begin{enumerate}
\item  $N_{\eps}(n)=N_{\sgn^1}(n)<N_{\sgn^0}(n)<N_{1}(n)$, for $n$ odd
\item $N_{\eps}(n)=N_{\sgn^0}(n)<N_{\sgn^1}(n)<N_{1}(n)$, for $n$ even.
\end{enumerate}
 from the formulas in the previous section.  We have already established $N_{\eps}(n)=N_{\sgn^1}(n)$ when $n$ is odd in Corollary \ref{eps=sgn1}, and $N_{\eps}(n)=N_{\sgn^0}(n)$ when $n$ is even in Corollary \ref{eps=sgn^0}.  

If $n$ is even, we have
 \beq
 \begin{split}
 N_{\eps}(n) &=\frac{1}{8}A(2n) \left( 2+\sum_{j=1}^{k-1}2^{\binom{k}{2}-\binom{j}{2}} \right) \\
&< \frac{1}{8}A(2n) \left( 2+2^k+ \sum_{j=1}^{k-1} 2^{\binom{k}{2}-\binom{j}{2}}(2^j-1) \right) \\
&=N_{\sgn^1}(n). \\
\end{split}
\eeq
  If $n$ is odd, then $N_{\eps}(n)=\frac{1}{4}A(2n)$, and it is obvious from the formulas that $N_{\eps}(n)<N_{\sgn^0}(n)$.
 
 Comparing these formulas to 
 \beq
 N_1(n)=p_2(n)-N_{\eps}(n)-N_{\sgn^0}(n)-N_{\sgn^1}(n)
 \eeq
  is considerably more difficult, since the partition function itself does not have an explicit formula.   Let us sketch how this can be done.  We will take $n$ even for our sketch; the case $n$ odd is similar.  By the earlier inequalities, we have
 \beq
 \begin{split}
 \frac{N_1(n)}{N_{\sgn^1}(n)} &> \frac{p_2(n)}{N_{\sgn^1}(n)}-3 \\
& >\frac{p_2(n)}{\frac{1}{8}A(2n)(2+2^{k+1})}-3. \\
 \end{split}
 \eeq
 So we must estimate $p_2(n)$ from below and $A(2n)$ from above; note that $2^k \leq n$.  It is easy to see that $p_2(n) \geq 2p(n)$.
 Lemma \ref{A(n).bound} yields the (subexponential) upper bound 
\beq
A(2n) \leq 2n^{\half (\log_2n+3)}.
\eeq
  According to Hardy-Ramanujan \cite{Hardy.Ram}, we have 
\beq
p(n) \sim \frac{1}{4n \sqrt{3}} \exp \left( \pi \sqrt{ \frac{2n}{3}} \right)
\eeq
as $n \to \infty$.  From the above it is straightforward to compute that
\beq
\lim_{n \to \infty} \frac{N_1(n)}{N_{\sgn^0}(n)}=\infty.
\eeq
But the inequality for all $n \geq 10$ takes more work.  For this, one may start with the classical estimate 
\beq
p(n) \geq \frac{n^{j-1}}{j! (j-1)!}
\eeq
for all $1\leq j \leq n$ (see \cite{Igor}), and use the estimate
\beq
k! \leq \sqrt{2 \pi} k^{k+\half} e^{-k} e^{\frac{1}{12k}}
\eeq
 of H. Robbins in \cite{Robbins}.
With care one can prove from these estimates, and from computation for small $n$, that
\beq
p(n) \geq 5nA(n)
\eeq
for $n \geq 64$.  Our desired inequality ultimately follows from this.

This concludes our sketch of how to prove (for $n \geq 10$) that $N_{\sgn^1}(n)<N_{1}(n)$ for $n$ even, and similarly $N_{\sgn^0}(n)<N_{1}(n)$ for $n$ odd.

 \section{Type $D_n$: Demihyperoctahedral Groups}
We denote by $\mb D_n$ the kernel of $\eps: \mb B_n \to \{ \pm 1\}$; it is the Weyl group of type $D_n$, called the demihyperoctahedral group.
Since $\mb D_1$ is trivial, $\mb D_2$ is the Klein $4$-group, and $\mb D_3$ is isomorphic to $S_4$, we may take $n \geq 4$ when convenient.

The group $\mb D_n$, for $n \geq 2$, has two multiplicative characters: $1$ and $\sgn$, where $\sgn$ is the restriction of $\sgn^0$ (or of $\sgn^1$) to $\mb D_n$.  To avoid confusion with earlier notation, let us write
\beq
N_\omega'(n)= \# \{ \pi \in \Irr(\mb D_n) \mid \det \pi=\omega \},
\eeq
for $\omega=1,\sgn$.

\subsection{Clifford Theory} In this subsection we assume Clifford Theory (especially the index $2$ case) as well-known, and tacitly apply it to the subgroup $\mathbb{D}_n$ of $\mathbb{B}_n$.  

Firstly, the restriction of $\rho_{\alpha,\beta}$ from $\mb B_n$ to $\mathbb{D}_n$ is irreducible iff $\alpha \neq \beta$.  Let us call representations obtained this way ``Type I" representations.  Moreover $\Res^{\mathbb{B}_n}_{\mathbb{D}_n} \rho_{\alpha_1,\beta_1} \cong \Res^{\mathbb{B}_n}_{\mathbb{D}_n} \rho_{\alpha_2,\beta_2}$ iff $(\alpha_2,\beta_2)$ equals $(\alpha_1,\beta_1)$ or $(\beta_1,\alpha_1)$.

Next, if $n$ is even and $\alpha \vdash \dfrac{n}{2}$, then the restriction of $\rho_{\alpha,\alpha}$ to $\mathbb{D}_n$ is a direct sum of two non-isomorphic irreducible representations, say $\rho_{\alpha}^+$   and $\rho_{\alpha}^-$.  Let us call these``Type II" representations.   The representation $\rho_{\alpha}^-$ is the twist of $\rho_{\alpha}^+$ by $e_1 \in \mathbb{B}_n - \mathbb{D}_n$.  We will write $\chi_\alpha^{\pm}$ for the character of $\rho_\alpha^{\pm}$.  We have
 \beq
\Irr(\mb D_n)= \{ \Res^{\mathbb{B}_n}_{\mathbb{D}_n} \rho_{\alpha,\beta} \mid (\alpha,\beta) \models n, \alpha \neq \beta \} \coprod \left\{ \rho_{\alpha}^{+}, \rho_{\alpha}^- \mid \alpha \vdash \frac{n}{2} \right\},
 \eeq
 with the second set nonempty only when $n$ is even.  The total number of representations of $\mathbb{D}_n$ is therefore equal to $\half p_2(n)$ when $n$ is odd, and equal to
 \beq
 \half p_2(n)+ \frac{3}{2} p\left(\frac{n}{2} \right)
 \eeq
 when $n$ is even.  In particular, there are two multiplicative characters of $\mathbb{D}_n$: the trivial character, and the restriction of $\sgn^0$ from $\mb B_n$, which we denote by `$\sgn$'.  
 
   The determinants of the Type I representations are given by 
 \beq
 \det \Res^{\mathbb{B}_n}_{\mathbb{D}_n} \rho_{\alpha \beta}=(\sgn)^{y_{\alpha \beta}}.
 \eeq
 
 So if $n$ is odd, we have
 \beq
 N_1'(n)=\half(N_1(n)+N_\eps(n))
 \eeq
 and
\beq
 N_{\sgn}'(n)=\half( N_{\sgn^0}(n)+N_{\sgn^1}(n)).
 \eeq
 (The $N_\omega(n)$ were computed in the previous section.)

 \subsection{Type II Determinants}
  Let $n=2a$.  For $\alpha \vdash a$, we have
  $\det  \Res^{\mathbb{B}_n}_{\mathbb{D}_n} \rho_{\alpha,\alpha} =1$ by  Proposition \ref{hookss}.  It follows that $\det \rho_\alpha^+=\det \rho_\alpha^-$, so we need only compute $\det \rho_\alpha^+$.
  
Put
 \beq
 x_\alpha^{+}=\frac{\dim \rho_\alpha^{+}-\chi_\alpha^+(s_1)}{2} \in \Z/2\Z.
 \eeq
  By Proposition \ref{solomon.principle}, 
 \beq
 \det \rho_\alpha^{+}=(\sgn)^{x_\alpha^+}.
 \eeq
  
If $n \geq 3$, it is not hard to see that $\Int(e_1)(s_1)=e_1 s_1 e_1^{-1}$ is conjugate in $\mb D_n$ to $s_1$; it follows that $\chi_\alpha^+(s_1)=\chi_\alpha^-(s_1)$.
 
Using Equation (\ref{charles}) we compute
 \beq
 \begin{split}
 \chi_\alpha^+(s_1) &=\half \chi_{\rho_{\alpha\alpha}}(s_1) \\
 			 &= \binom{n-2}{a,a-2} f_\alpha \chi_\alpha(s_1) \\
			 &= \binom{n-2}{a,a-2} f_\alpha (f_\alpha-2 g_\alpha), \\
				\end{split}
				\eeq
  and so
  \beq
 x_{\alpha}^+=\frac{1}{4}  \binom{n}{a,a} f_\alpha^2-\half  \binom{n-2}{a-2,a} f_\alpha^2+\binom{n-2}{a,a-2} f_\alpha g_\alpha.
 \eeq
 
  If $f_\alpha$ is even, then clearly $x_\alpha^+$ is even.   If $f_\alpha$ is odd, then
\beq{}
\begin{split}
 x_{\alpha}^+ &=\frac{1}{4}  \binom{n}{a,a}-\half  \binom{n-2}{a-2,a}+\binom{n-2}{a,a-2}g_\alpha \\
 		&= \half \binom{n-2}{a-1,a-1}+\binom{n-2}{a,a-2}g_\alpha. \\
		\end{split}
 \eeq{}

 \begin{lemma}
Let $n=2a \geq 6$ be an even integer.
\begin{enumerate}
\item If $n$ is neither of the form $2^k$ nor $2^k+2$, then  $\binom{n-2}{a-1,a-1}$ is a multiple of $4$, and $\binom{n-2}{a,a-2}$ is even.
\item If $n$ is of the form $2^k$, then $\binom{n-2}{a-1,a-1}$ is a multiple of $4$, and $\binom{n-2}{a,a-2}$ is odd.
\item If $n$ is of the form $2^k+2$, then $\half \binom{n-2}{a-1,a-1}$ is odd and $\binom{n-2}{a,a-2}$ is even.
\end{enumerate}
\end{lemma}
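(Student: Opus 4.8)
The plan is to reduce both statements to $2$-adic valuations of binomial coefficients, computed via the carry-counting principle recalled in Section~\ref{bin.notation}: for $i+j=m$, the quantity $\ord\binom{m}{i,j}$ equals the number of carries incurred when adding $i$ to $j$ in binary. Throughout I abbreviate $a=n/2$, so that the two coefficients in question are $\binom{n-2}{a-1,a-1}=\binom{2a-2}{a-1}$ and $\binom{n-2}{a,a-2}=\binom{2a-2}{a}$. The whole lemma will then follow by translating the hypotheses ``$n=2^k$'' and ``$n=2^k+2$'' into arithmetic conditions on $\nu(a-1)$, $\nu(a)$, and $\ord(a)$.

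First I would handle the central coefficient $\binom{2a-2}{a-1}=\binom{2(a-1)}{a-1}$. Adding $a-1$ to itself in binary is just doubling, and the number of carries produced is exactly the number of $1$'s of $a-1$; hence $\ord\binom{2a-2}{a-1}=\nu(a-1)$. It remains to read off $\nu(a-1)$ in each case. Since $n\geq 6$ forces $a\geq 3$, we have $\nu(a-1)\geq 1$, and $\nu(a-1)=1$ holds precisely when $a-1$ is a power of $2$, i.e.\ when $n=2a=2^k+2$. Thus in the third case $\ord\binom{2a-2}{a-1}=1$, which is the assertion that $\tfrac12\binom{2a-2}{a-1}$ is odd, while in the first and second cases $\nu(a-1)\geq 2$, giving divisibility by $4$. (For $n=2^k$, necessarily $k\geq 3$, and one checks directly that $a-1=2^{k-1}-1$ has $k-1\geq 2$ ones.)

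Next I would treat $\binom{2a-2}{a}$ using the ratio identity $\binom{2a-2}{a}=\tfrac{a-1}{a}\binom{2a-2}{a-1}$, which yields $\ord\binom{2a-2}{a}=\nu(a-1)+\ord(a-1)-\ord(a)$. When $a$ is odd this equals $\nu(a-1)+\ord(a-1)\geq 1$, so the coefficient is even; when $a$ is even, using $\nu(a-1)=\nu(a)-1+\ord(a)$ it simplifies to $\nu(a)-1$. Hence $\binom{2a-2}{a}$ is odd exactly when $a$ is even with $\nu(a)=1$, that is, when $a$ is a power of $2$, i.e.\ $n=2^k$. This gives oddness in the second case and evenness in the first and third (in the third, $a=2^{k-1}+1$ is odd). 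Alternatively one can count the carries in $a+(a-2)$ directly in each case, which is equally short.

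I expect the only real work to be bookkeeping: matching each parity or divisibility claim to the correct one of the three cases, and confirming the edge constraints imposed by $n\geq 6$ (so that $k\geq 3$ when $n=2^k$ and $k\geq 2$ when $n=2^k+2$, ruling out the degenerate possibilities $\nu(a-1)=0$ and $a=2$). There is no conceptual obstacle; the identity $\binom{2a-2}{a}=\tfrac{a-1}{a}\binom{2a-2}{a-1}$ is what makes the second coefficient as transparent as the first, avoiding a separate carry analysis.
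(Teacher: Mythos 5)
Your proof is correct. Note that the paper offers no proof of this lemma at all (it is explicitly left to the reader), so there is nothing to compare against; your argument is precisely in the spirit of the paper's own toolkit from Section \ref{bin.notation}, where $\ord\binom{m}{i,j}$ is identified with the number of carries when adding $i$ to $j$ in binary. Your two key reductions are both sound: $\ord\binom{2a-2}{a-1}=\nu(a-1)$ (equivalently, by Legendre's formula or carry counting for $m+m$), and the ratio identity $\binom{2a-2}{a}=\tfrac{a-1}{a}\binom{2a-2}{a-1}$ giving $\ord\binom{2a-2}{a}=\nu(a-1)+\ord(a-1)-\ord(a)$, which collapses to $\nu(a)-1$ for $a$ even (via $\nu(a-1)=\nu(a)-1+\ord(a)$) and is $\geq 1$ for $a$ odd. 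The case bookkeeping also checks out: $\nu(a-1)=1$ exactly when $n=2^k+2$, and $a$ is an even power of $2$ exactly when $n=2^k$ with $k\geq 3$ (forced by $n\geq 6$), which matches the three cases of the lemma.
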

 
The proof is left to the reader. $\square$ 
 
\begin{prop} Let $n\geq 6$ be an even integer.
\begin{enumerate}
\item If $n$ is not of the form $2^k$ or $2^k+2$, then all Type II representations have determinant $1$.
\item If $n$ is of the form $2^k$, then $\rho_{\alpha}^+, \rho_{\alpha}^-$ have determinant $\sgn$ iff $f_\alpha$ and $g_\alpha$ are odd.
\item If $n$ is of the form $2^k+2$, then $\rho_{\alpha}^+, \rho_{\alpha}^-$ have determinant $\sgn$ iff $f_\alpha$ is odd.
\end{enumerate}
 \end{prop}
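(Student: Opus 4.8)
The plan is to combine the formula for $x_\alpha^+$ just derived with the case analysis of the preceding lemma, treating the two possible parities of $f_\alpha$ separately and then reading off when $x_\alpha^+$ is odd.

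First I would recall that by Proposition \ref{hookss} the restricted representation $\Res^{\mathbb{B}_n}_{\mathbb{D}_n}\rho_{\alpha,\alpha}$ has trivial determinant, so $\det\rho_\alpha^+=\det\rho_\alpha^-$ and it suffices to determine $\det\rho_\alpha^+=(\sgn)^{x_\alpha^+}$. The representation $\rho_\alpha^\pm$ has determinant $\sgn$ exactly when $x_\alpha^+$ is odd. If $f_\alpha$ is even, the computation above shows $x_\alpha^+$ is even, hence the determinant is trivial; this disposes of every $\alpha$ with $f_\alpha$ even, uniformly across all three cases. So I may assume $f_\alpha$ is odd, in which case
\beq
x_\alpha^+ \equiv \half\binom{n-2}{a-1,a-1}+\binom{n-2}{a,a-2}g_\alpha \pmod 2.
\eeq

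Next I would feed in the three cases of the preceding lemma. In case (1), where $n$ is neither $2^k$ nor $2^k+2$, the lemma gives that $\binom{n-2}{a-1,a-1}$ is divisible by $4$ and $\binom{n-2}{a,a-2}$ is even, so both summands vanish mod $2$ regardless of $g_\alpha$; thus $x_\alpha^+$ is always even and every Type II representation has determinant $1$, proving (1). In case (2), where $n=2^k$, the lemma gives $4\mid\binom{n-2}{a-1,a-1}$ (so the first term is even) and $\binom{n-2}{a,a-2}$ odd, whence $x_\alpha^+\equiv g_\alpha$; combined with the standing assumption that $f_\alpha$ is odd, this yields determinant $\sgn$ iff both $f_\alpha$ and $g_\alpha$ are odd, proving (2). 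In case (3), where $n=2^k+2$, the lemma gives $\half\binom{n-2}{a-1,a-1}$ odd and $\binom{n-2}{a,a-2}$ even, so $x_\alpha^+\equiv 1$ independently of $g_\alpha$; hence determinant $\sgn$ iff $f_\alpha$ is odd, proving (3).

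The only real content is the parity bookkeeping in the preceding lemma, which is quoted here as already established, so this proposition reduces to substitution. The main thing to be careful about is the factor of $\half$ on the first binomial term: one must use the \emph{exact} divisibility statements ($4\mid\binom{n-2}{a-1,a-1}$ versus $\half\binom{n-2}{a-1,a-1}$ odd) rather than mere parity, since it is $\half\binom{n-2}{a-1,a-1}\bmod 2$ that enters $x_\alpha^+$. Once that is respected, no case presents any genuine obstacle.
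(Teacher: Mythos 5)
Your proof is correct and is essentially the argument the paper intends: the paper states this proposition immediately after deriving the formula $x_\alpha^+\equiv\half\binom{n-2}{a-1,a-1}+\binom{n-2}{a,a-2}g_\alpha \pmod 2$ (for $f_\alpha$ odd) and the parity lemma, leaving exactly the substitution you carry out. Your remark that one must use the exact divisibility statements (multiple of $4$ versus $\half\binom{n-2}{a-1,a-1}$ odd) is precisely the point of the lemma's formulation, so nothing is missing.
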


\begin{thm} Let $n \geq 4$.
 We have
 \beq
 N_{\sgn}'(n)=
  \left\{
\begin{array}{ll}
 \half( N_{\sgn^0}(n)+N_{\sgn^1}(n)) +\half  n&\text{ if $n=2^k$ for some $k\geq1$,}\\
 \half( N_{\sgn^0}(n)+N_{\sgn^1}(n))+n-2 & \text{ if $n=2^k+2$ for some $k\geq 1$},\\
 \half( N_{\sgn^0}(n)+N_{\sgn^1}(n)) & \text{ otherwise.}
 \end{array}
  \right.  
 \eeq
 \end{thm}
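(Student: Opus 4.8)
The plan is to split $\Irr(\mb D_n)$ into the Type I and Type II representations, count those of determinant $\sgn$ in each family separately, and add. For $n$ odd there are no Type II representations and the answer is already recorded above as $\half(N_{\sgn^0}(n)+N_{\sgn^1}(n))$, matching the ``otherwise'' branch; so the real content is the Type II correction when $n$ is even.

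First I would handle the Type I representations. Since $\det\Res^{\mb B_n}_{\mb D_n}\rho_{\alpha\beta}=(\sgn)^{y_{\alpha\beta}}$, and $y_{\alpha\beta}$ is odd precisely when $\det\rho_{\alpha\beta}\in\{\sgn^0,\sgn^1\}$, the number of ordered bipartitions $(\alpha,\beta)\models n$ with $y_{\alpha\beta}$ odd equals $N_{\sgn^0}(n)+N_{\sgn^1}(n)$. The Type I irreducibles correspond to the unordered pairs $\{\alpha,\beta\}$ with $\alpha\neq\beta$, and the expression for $y_{\alpha\beta}$ is visibly symmetric in $\alpha,\beta$ (swapping $a\leftrightarrow b$ fixes each of the three binomial coefficients), so the determinant descends to the unordered pair. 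Moreover the diagonal pairs contribute nothing to the $y$-odd count: by the proof of Proposition~\ref{hookss} we have $y_{\alpha\alpha}\equiv 0$. Hence the number of Type I representations of determinant $\sgn$ is exactly $\half(N_{\sgn^0}(n)+N_{\sgn^1}(n))$, the ``main term'' in every branch.

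Next I would count the Type II representations of determinant $\sgn$, which occur only for $n$ even; here I invoke the Proposition immediately preceding, valid for $n\geq 6$. If $n$ is neither $2^k$ nor $2^k+2$, all Type II determinants are trivial, giving no correction. If $n=2^k$, writing $a=n/2=2^{k-1}$, the $\rho_\alpha^{\pm}$ of determinant $\sgn$ are indexed by the $\alpha\vdash a$ with $f_\alpha,g_\alpha$ both odd; by Corollary~\ref{f.g.correlation} there are $\half A(a)$ of these, and since $a$ is a power of $2$ we have $A(a)=a$, so the two-for-one count gives $2\cdot\half A(a)=2^{k-1}=\half n$. If $n=2^k+2$ with $k\geq 2$, set $a=n/2=2^{k-1}+1$; the relevant $\alpha$ are those with $f_\alpha$ odd, numbering $A(a)$, and since $\bin(a)=\{0,k-1\}$ we get $A(a)=2^{k-1}$, whence $2A(a)=2^k=n-2$. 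Adding each Type II correction to the main term yields the stated three-line formula.

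The main obstacle is bookkeeping at the boundary rather than any deep difficulty: the Proposition I rely on requires $n\geq 6$, so the case $n=4$ (simultaneously $2^2$ and $2^1+2$) must be checked by hand. Using the explicit value $x_\alpha^+=\half\binom{n-2}{a-1,a-1}+\binom{n-2}{a,a-2}g_\alpha$ for $f_\alpha$ odd, one finds $x_\alpha^+\equiv 1+g_\alpha$ when $n=4$, so exactly one of the two partitions of $2$ yields determinant $\sgn$, contributing $2=\half\cdot 4$ and confirming the $n=2^k$ branch. I would also note that for $k\geq 2$ the number $2^k+2$ carries exactly one factor of $2$, hence is never a power of $2$, so outside $n=4$ the two special branches are disjoint and the formula is unambiguous.
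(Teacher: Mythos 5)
Your proof is correct and follows essentially the same route as the paper: splitting $\Irr(\mb D_n)$ into Type I and Type II representations, getting the main term $\half(N_{\sgn^0}(n)+N_{\sgn^1}(n))$ from the Type I determinant formula, and computing the Type II corrections via the preceding Proposition together with $A(n/2)=\half n$ for $n=2^k$ and $A(n/2)=\half n-1$ for $n=2^k+2$, with $n=4$ checked by hand. You have simply written out in full the details (symmetry of $y_{\alpha\beta}$, vanishing of $y_{\alpha\alpha}$, the count $\half A(a)$ from Corollary \ref{f.g.correlation}) that the paper leaves implicit.
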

 
 This follows from the above and using $A(n/2)=\half n$ when $n$ is a power of $2$, and $A(n/2)=\half n -1$ when $n=2^k+2$.  (The case $n=4$ is easily done separately, and agrees with the first two cases.)
 
 Note that if $n$ is even, then
 \beq
 N_1'(n)+  N_{\sgn}'(n)=\half p_2(n)+\frac{3}{2}p(n),
 \eeq
 so in principle we have a formula for $N_1'(n)$ as well.
 
 \section{Exceptional Coxeter Groups}
 
 There are only finitely many irreducible finite Coxeter groups not of type $A_n$, $B_n$, $D_n$, or $I_2(n)$, namely the exceptional ones.   For completeness, we give $N_\omega$ for each exceptional group.  This is easily done by the propositions in Section \ref{SolomonP}, together with the character tables for these groups, which may be found in \cite{Geck}.
 
 For the exceptional Coxeter groups $W$ with $|W_{\ab}|=2$, we offer the following table.  Below $N_1$ is the number of irreducible representations of $W$ with trivial determinant, and $N_{\eps_W}$ is the number with $\eps_W$ as determinant.

\begin{center}
\begin{tabular}{|c|c|c|} 
 \hline
Type & $N_{1}$ & $N_{\eps_W}$  \\ 
 \hline\hline
 $H_3$ & $6$ & $4$ \\
 \hline
 $H_4$ & $19$ & $15$ \\
 \hline 
 $E_6$ & $13$ & $12$ \\
 \hline
 $E_7$ & $44$ & $16$ \\
 \hline
 $E_8$ & $63$ & $49$ \\
 \hline
 \end{tabular}
\end{center}

\bigskip

Finally, for type $F_4$, we have $N_1=9$, $N_{\eps_W}=8$, and $N_\omega=4$ for $\omega \neq 1,\eps_W$.

 \bibliographystyle{alpha}
\bibliography{refs}
 \end{document}